\documentclass[a4paper]{article}
\UseRawInputEncoding
\usepackage{amsthm}

\newtheorem{defi}{Definition}[section]
\newtheorem{thm}[defi]{Theorem}
\newtheorem{lem}[defi]{Lemma}

\newtheorem{prop}[defi]{Proposition}

\usepackage[toc,page]{appendix}
\usepackage{url}
\usepackage{pgfplots}
\usepackage{tikz}
\usepackage{amsmath}
\usepackage{mathtools}
\usepackage[a4paper, left=3cm, right=3cm, top=4cm]{geometry}
\usepackage{amssymb}
\usepackage{amsfonts}
\usepackage{scrlayer-scrpage}
\pagestyle{scrheadings}
\usepackage{hyperref}
\usepackage{accents}
\usepackage{ulem}
\clearscrheadfoot
\newcommand{\R}{\mathbb{R}}

\newcommand{\Z}{\mathbb{Z}}

\def\XXint#1#2#3{{\setbox0=\hbox{$#1{#2#3}{\int}$ }
\vcenter{\hbox{$#2#3$ }}\kern-.6\wd0}}

\AtBeginDocument{
  \let\div\relax
  \DeclareMathOperator{\div}{div}
}

\usepackage{setspace}
\makeatletter
\newcommand{\MSonehalfspacing}{%
  \setstretch{1.44}
  \ifcase \@ptsize \relax 
    \setstretch {1.448}%
  \or 
    \setstretch {1.399}%
  \or 
    \setstretch {1.433}%
  \fi
}
\newcommand{\MSdoublespacing}{%
  \setstretch {1.92}
  \ifcase \@ptsize \relax 
    \setstretch {1.936}%
  \or 
    \setstretch {1.866}%
  \or 
    \setstretch {1.902}%
  \fi
}
\makeatother

\ohead{\pagemark}

\begin{document}

	\title{Half-Harmonic Gradient Flow: Aspects of a Non-Local Geometric PDE}
	
	\author{Jerome Wettstein}
\maketitle
\date{ }
\begin{abstract}
	The goal of this paper is to discuss some of the results in \cite{wettsteinsphere} and \cite{wettstein} and expand upon the work there by proving a global weak existence result as well as a first bubbling analysis in finite time. In addition, an alternative local existence proof is presented based on a fixed-point argument. This leads to two possible outlooks until a conjecture by Sire, Wei and Zheng is settled, see \cite{sireweizheng}: Either there always exists a global smooth solution (thus the solution constructed here is actually as regular as desired) or finite-time bubbling may occur in a similar way as for the harmonic gradient flow.

	In addition, in this paper, we give a brief summary of gradient flows, in particular the harmonic and half-harmonic one and we draw similarities between the two cases. For clarity, we restrict to the case of spherical target manifolds, but our entire discussion extends after taking care of technical details to arbitrary closed target manifolds.
\end{abstract}

\medskip

\tableofcontents 

\section{Introduction}

	Among the most prominent partial differential equations is the harmonic map equation. Its relevance derives from the way they emerge naturally (these maps arrive for example in theoretical physics they appear as instantons or their close connection to curvature) as well as the fact that the associated PDE, especially in the critical realm where the domain is two-dimensional, is related to the creation of powerful techniques such as H\'elein's moving frames method, Uhlenbeck's Coulomb gauge construction and bubbling techniques. The harmonic map equation has also inspired the introduction of fractional harmonic maps by Da Lio-Rivi\`ere \cite{dalioriv}, \cite{dalioriv2} and the corresponding regularity theory (\cite{dalioriv2}, \cite{schikorradaliocomment}, \cite{daliocomment1}, \cite{daliocomment2}, \cite{dalioschikorra}, \cite{mazoschi}) and bubbling analysis (\cite{daliobubbles}, \cite{daliobubbles2}) have led to generalisations of various ideas from the local world, such as Wente-type estimates, gauge techniques, Pohozaev identities and many others in the fractional world.\\
	
	Let us take a step back and recall the main definitions that we shall be using. For the moment, let us take $(M, g)$, $(N, \gamma)$ to be arbitrary closed Riemannian manifolds. Using them, one is naturally led to define the following Dirichlet energy for maps $u: M \to N$:
	\begin{equation}
	\label{gendirichletenergy}
		E(u) := \frac{1}{2} \int_{M} g^{\alpha \beta}(x) \gamma_{ij}(u(x)) \frac{\partial u^{i}}{\partial x_{\alpha}}(x) \frac{\partial u^{j}}{\partial x_{\beta}}(x) dx,
	\end{equation}
	where we assume for convenience that $M, N$ are embedded submanifolds of the Euclidean space and use Einstein's summation convention. Naturality of this definition becomes apparent if one realises that given $M = T^{m}, N = \R^n$ with the corresponding natural Riemannian metrics, the energy simplifies to the usual Dirichlet energy:
	\begin{equation}
	\label{gendirichletenergy2}
		E(u) = \frac{1}{2} \int_{T^{m}} | \nabla u |^2 dx
	\end{equation}
	Critical points of \eqref{gendirichletenergy2} satisfy the equation:
	$$- \Delta u = 0,$$
	which immediately, by means of elliptic regularity, yields $u \in C^{\infty}(T^n)$. A natural question thus pertains to the regularity of critical points of \eqref{gendirichletenergy}, where one may interpret the condition that $u$ takes values in $N$ as a Lagrange-multiplier. This also leads us to the first key definition:
	\begin{defi}
		A map $u \in H^{1}(M;N)$ is called \textit{harmonic}, if and only if it is a critical point of the energy function $E$ as defined in \eqref{gendirichletenergy} among competitors in $H^{1}(M;N)$.
	\end{defi}
	Here, we recall that $H^{1}(M;N)$ consists of all functions $u \in H^{1}(M;\R^n)$ such that $u(x) \in N$ for almost all $x \in M$. Therefore, the choice of space is appropriate for both the energy functional $E$ as well as the condition of $u$ assuming values in $N$. As usual with variational problems, computing the Euler-Lagrange equation is the first step towards regularity results and in the case of harmonic maps, we get:
	\begin{equation}
	\label{harmoniceulerlagrange}
		u \text{ is harmonic } \Leftrightarrow (-\Delta)_{M} u \perp T_u N \text{ in } \mathcal{D}'(M) \Leftrightarrow (-\Delta)_{M} u = A(u)(\nabla u, \nabla u),
	\end{equation}
	where $A$ denotes the second fundamental form of $N$ and $\Delta_M$ being the Laplace-Beltrami operator of $M$. This already highlights the intimate connection of harmonic maps to curvature and geometry of the manifolds and indeed, existence may be tied to conditions on the curvature, see for example \cite{schoenyau}. A particularily striking feature of the PDE \eqref{harmoniceulerlagrange} is the following quadratic structure:
	$$| A(u)( \nabla u, \nabla u) | \leq | \nabla u |^2,$$
	which immediately singles out the case of $M$ being two-dimensional as being critical. Indeed, in this case, if $u \in H^1$, then the RHS of \eqref{harmoniceulerlagrange} is in $L^{1}$. In this case, one merely deduces $\nabla u \in L^{2,\infty}$, so we have regularity of the same homogeneity as before and standard bootstrapping techniques are not immediately available. What is even worse, there is no general regularity theory for solutions of such equations with smooth quadratic non-linearity as can be seen from:
	$$-\Delta u = | \nabla u |^2$$
	In this case, by using $v := e^{u}$, one may immediately construct counterexamples by using the observation that the above PDE is equivalent to $-\Delta v = 0$ and then considering suitable versions of the fundamental solution, see \cite{riv3} for details.\\
	
	Fortunately, the case of the harmonic map equation \eqref{harmoniceulerlagrange} behaves much better due to its geometric structure. Namely, by combined efforts of various authors such as B\'ethuel, Coron, H\'elein, Shatah and Rivi\`ere, just to name a few, we know that harmonic maps are always regular. A common feature among proof of regularity is the use of compensation results based on properties of $2$D-jacobians as summarised by Wente's estimate and later extended to arbitrary div-curl quantities and determinants by Coifman-Lions-Meyer-Semmes which we state here for the reader's convenience:
	\begin{prop}
		Let $r > 0$ be arbitrary and $1 \leq p < 2$ and $B_r(0) \subset \R^2$ the ball of radius $r$ around $0$. If $u \in W^{1,p}(B_r(0))$ and $a,b \in W^{1,2}(B_r(0))$ are such that:
		$$-\Delta u = \partial_x a \partial_y b - \partial_x b \partial_y a,$$
		and $u$ has vanishing trace on $\partial B_r(0)$, then $u$ is actually continuous and the following estimate holds:
		\begin{equation}
		\label{wenteest}
			\| u \|_{L^{\infty}} + \| \nabla u \|_{L^{2,1}} + \| \nabla^2 u \|_{L^1} \lesssim \| \nabla a \|_{L^2} \| \nabla b \|_{L^2}
		\end{equation}
	\end{prop}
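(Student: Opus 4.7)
The plan is to invoke the compensation phenomenon of Coifman--Lions--Meyer--Semmes (CLMS) together with mapping properties of the Newtonian potential between Hardy, Lorentz and $\mathrm{BMO}$ spaces, reducing the Dirichlet problem on $B_r(0)$ to one on all of $\R^2$ and correcting boundary values by a harmonic function.

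First I would extend $a$ and $b$ to $\widetilde a, \widetilde b \in W^{1,2}(\R^2)$, compactly supported in a slightly larger ball, with $\|\nabla \widetilde a\|_{L^2(\R^2)} \lesssim \|\nabla a\|_{L^2(B_r(0))}$ (and similarly for $b$), via reflection together with a smooth cut-off. By the CLMS theorem, the Jacobian $\widetilde J := \partial_x \widetilde a\, \partial_y \widetilde b - \partial_x \widetilde b\, \partial_y \widetilde a$ belongs to the real Hardy space $\mathcal H^1(\R^2)$ with
\[
\| \widetilde J \|_{\mathcal H^1(\R^2)} \lesssim \| \nabla \widetilde a \|_{L^2(\R^2)} \| \nabla \widetilde b \|_{L^2(\R^2)} \lesssim \| \nabla a \|_{L^2(B_r(0))} \| \nabla b \|_{L^2(B_r(0))}.
\]
This compensation step rescues us from the failure of elliptic $L^1$-regularity and depends crucially on the null-form structure $\det(\nabla \widetilde a, \nabla \widetilde b)$.

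Now let $\widetilde u := \Phi * \widetilde J$, with $\Phi(x) = -\frac{1}{2\pi} \log |x|$ the fundamental solution of $-\Delta$ on $\R^2$, so that $-\Delta \widetilde u = \widetilde J$ distributionally on $\R^2$. The three target norms of $\widetilde u$ then follow in parallel from Hardy-space harmonic analysis: the second derivatives $\partial_i \partial_j \widetilde u$ are obtained by applying Calder\'on--Zygmund operators to $\widetilde J$, and such operators map $\mathcal H^1 \to L^1$, giving $\| \nabla^2 \widetilde u \|_{L^1(\R^2)} \lesssim \| \widetilde J \|_{\mathcal H^1}$; the gradient $\nabla \widetilde u$ is, up to Riesz transforms, the Riesz potential $I_1 \widetilde J$, and the boundedness $I_1 : \mathcal H^1(\R^2) \to L^{2,1}(\R^2)$ yields the Lorentz estimate; finally $\widetilde u$ itself is controlled in $L^\infty$ via $\mathcal H^1$--$\mathrm{BMO}$ duality, since $\Phi$ lies in $\mathrm{BMO}(\R^2)$.

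To return to the original problem, set $v := u - \widetilde u|_{B_r(0)}$. Since $-\Delta v = 0$ distributionally in $B_r(0)$ and $v \in W^{1,p}(B_r(0))$ for some $p \in [1,2)$, Weyl's lemma makes $v$ a classical harmonic function whose boundary trace equals $-\widetilde u|_{\partial B_r(0)}$ and is thus bounded in $L^\infty$ by what has already been shown. The maximum principle and standard interior elliptic estimates then transfer the three bounds from $\widetilde u$ to $v$, and adding yields the assertion for $u$. The hardest step is the CLMS compensation: its proof requires either Fefferman's grand-maximal-function characterisation of $\mathcal H^1$ or an atomic decomposition exploiting the divergence structure of the Jacobian; once this deep result is granted, the remaining estimates are standard Calder\'on--Zygmund and real interpolation.
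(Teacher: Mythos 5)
The paper does not actually prove this Proposition --- it is the classical inequality of Wente, in the improved Lorentz form due to Coifman--Lions--Meyer--Semmes and Tartar, stated here for convenience --- so there is no internal proof to compare with. Your first three steps are sound and follow the standard route: the compactly supported extension of $a,b$, the CLMS compensation placing $\widetilde J$ in $\mathcal H^1(\R^2)$, and the whole-plane estimates for $\widetilde u = \Phi * \widetilde J$ via the boundedness of $R_iR_j\colon \mathcal H^1\to \mathcal H^1\subset L^1$, the mapping $I_1\colon \mathcal H^1(\R^2) \to L^{2,1}(\R^2)$, and $\mathcal H^1$--$\mathrm{BMO}$ duality against $\log|x|$.

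The gap is in the last step, when you set $v = u - \widetilde u$, harmonic on $B_r$ with trace $-\widetilde u|_{\partial B_r}$, and assert that ``the maximum principle and standard interior elliptic estimates then transfer the three bounds.'' The maximum principle does give $\|v\|_{L^\infty(B_r)} \le \|\widetilde u\|_{L^\infty(\partial B_r)}$, but interior estimates only yield $|\nabla v(x)| \lesssim \operatorname{dist}(x,\partial B_r)^{-1}\,\|v\|_{L^\infty}$, and $\operatorname{dist}(\cdot,\partial B_r)^{-1}$ lies in $L^{2,\infty}(B_r)$ but \emph{not} in $L^{2,1}(B_r)$, nor even in $L^2(B_r)$; indeed a harmonic function with bounded continuous boundary data can have infinite Dirichlet energy (take $g(\theta)=\sum_{k\ge1}2^{-k}\cos(4^k\theta)$), so boundedness of the trace cannot by itself control $\|\nabla v\|_{L^{2,1}(B_r)}$, much less $\|\nabla^2 v\|_{L^1(B_r)}$. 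What is missing is a genuine boundary-to-interior estimate: one must first exploit your $W^{2,1}$ control on $\widetilde u$ (via a Fubini slicing in the radius) to show that the tangential derivative of the trace $\widetilde u|_{\partial B_r}$ is controlled in $L^1(\partial B_r)$ --- indeed in the boundary Hardy space --- and then invoke a dedicated lemma, of the type proven in H\'elein's account of Wente's inequality, that the harmonic extension of such boundary data has gradient in $L^{2,1}(B_r)$ and Hessian in $L^1(B_r)$. That lemma, not ``standard interior estimates,'' carries the weight of the correction step; without it the proof is incomplete.
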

	A similar result continues to hold for the RHS being a product of a divergence-free and a rotation-free vector field and in fact, the underlying key result is a Hardy-regularity estimate for the RHS\footnote{In contrast to $L^{1}$, the Hardy space $\mathcal{H}^{1}$ is well-behaved with respect to Cald\'eron-Zygmund operators and thus elliptic regularity results apply in this case.}. One may wonder how this estimate helps us treat equations like \eqref{harmoniceulerlagrange}. The idea is that either employing H\'elein's moving frames or Rivi\`ere's change of gauge approach, one may reveal a hidden structure of jacobians in the harmonic map equation. This is best illustrated in the case $M = T^{2}, N = S^{n-1}$, where the harmonic map equation reads:
	\begin{equation}
	\label{harmonicmapsphereeq}
		- \Delta u = u | \nabla u |^2
	\end{equation}
	Shatah observed that the equation \eqref{harmonicmapsphereeq} is actually equivalent to the following conservation laws:
	$$\forall i,j \in \{ 1, \ldots n \}: \div \left( u_i \nabla u_j - u_j \nabla u_i \right) = 0$$
	Thus, we have:
	\begin{align}
		-\Delta u_i 	&= \sum_j u_i \nabla u_j \cdot \nabla u_j \notag \\
					&= \sum_j \left( u_i \nabla u_j - u_j \nabla u_i \right) \cdot \nabla u_j + u_j \nabla u_i \cdot \nabla u_j \notag \\
					&= \sum_j \left( u_i \nabla u_j - u_j \nabla u_i \right) \cdot \nabla u_j,
	\end{align}
	where in the last line we used the observation that $\sum_j u_j \nabla u_j = 0$, as $u$ takes values in $S^{n-1}$ and thus this may be interpreted as a vector of scalar products between $u$, which belongs to the normal space of $S^{n-1}$, and partial derivatives of $u$, which are in the tangent space of $S^{n-1}$, thus these vanish. Keeping Shatah's conservation laws in mind, the structure of Wente's estimates is now uncovered and by localising, splitting $u$ into harmonic and zero-boundary value parts, one may establish a suitable Morrey decrease and this ultimately allows, by virtue of Adams embedding, to conclude that $\nabla u \in L^{p}_{loc}$ for some $p > 2$ (see \cite{riv2}). The remaining part of the regularity proof is then a bootstrap argument.\\
	
	A generalisation of the notion of harmonic maps was developed later by Da Lio and Rivi\`ere in \cite{dalioriv} following the spirit above. Namely, taking $N$ to be an embedded submanifold of $\R^n$ with the induced Riemannian structure, by using the $s$-Dirichlet energy defined by:
	$$E_{s}(u) := \int_{S^1} | (-\Delta)^{s/2} u |^2 dx, \quad \forall u \in H^{s}(S^1,N),$$
	one may define:
	\begin{defi}
	\label{deffracharmonic}
		A map $u \in H^{s}(S^1;N)$ is called \textit{$s$-harmonic}, if and only if it is a critical point of the energy function $E_s$ among variations in the space $H^{s}(S^1;N)$.
	\end{defi}
	Fractional Laplacians $(-\Delta)^{s/2}$ may be defined by Fourier multipliers or using principal value integrals, we refer to \cite{hitchhiker} for some exposition or the next subsection where both kinds of definitions are introduced. These maps are related to free-boundary minimal discs (\cite{dalioriv}, \cite{daliopigati}) and singular limits of Ginzburg-Landau approximations (\cite{millotsire}), so there are again interesting connections to geometry.
	
	There is no reason to use $S^1$ instead of $\R$ and most results are available in both cases. If $s = 1/2$, which will interest us particularily, $E_{1/2}$ is conformally invariant (under the trace of M\"obius transformations) and the stereographic projection enables us to switch between $\R$ and $S^1$ seemlessly. For the remainder of this paper, we shall restrict our attention to this case.\\
	
	As a very simple first case, if $N = \R$, then the Euler-Lagrange equation reads:
	$$(-\Delta)^{1/2} u = 0,$$
	which, similar to the harmonic case, immediately proves regularity. In general, the Euler-Lagrange equation has a very similar structure to \eqref{harmoniceulerlagrange}, namely it becomes:
	\begin{equation}
	\label{halfharmmapeq}
		u \text{ is } \frac{1}{2}\text{-harmonic } \Leftrightarrow (-\Delta)^{1/2} u \perp T_u N \text{ in } \mathcal{D}'(S^1)
	\end{equation}
	In \cite{dalioriv}, the equation above has been rewritten using Three-commutators to reveal compensation structures. In addition, in the case $N = S^{n-1}$, we may push the similarity with \eqref{harmonicmapsphereeq} further by noting the equivalence of \eqref{halfharmmapeq} with:
	\begin{equation}
	\label{halfharmsphere}
		(-\Delta)^{1/2} u = u | d_{1/2} u |^2,
	\end{equation}
	where we use the framework of fractional gradients as introduced in \cite{mazoschi}:
	$$d_{1/2} u(x,y) = \frac{u(x) - u(y)}{|x-y|^{1/2}}, \quad | d_{1/2} u|^2(x) := \int_{S^1} | d_{1/2} u(x,y) |^2 \frac{dy}{| x-y |}$$
	For completeness' sake, let us emphasise here that we use the natural distance $| x - y | = 2 | \sin(\frac{x-y}{2}) |$ on $S^1$. It should be noted that the fractional gradients are very natural and relate to the Gagliardo-Sobolev spaces and Bessel potential spaces, see also \cite{prats1}, \cite{prats2}.\\
	
	It is interesting to observe that the key features of \eqref{harmoniceulerlagrange} are also present in \eqref{halfharmmapeq}: Both possess a quadratic RHS and are critical equations not allowing for simple regularity by bootstrap techniques. Indeed, in \cite{wettstein}, we even establish a "curvature-like" formulation similar to \eqref{halfharmsphere} in general, but the structure is most easily recognizable in the case $N = S^{n-1}$.\\
	
	Regularity properties of half-harmonic maps have been studied extensively and we know that every such function is actually regular. The investigation was started by \cite{dalioriv}, \cite{dalioriv2} and later expanded by other authors in \cite{schikorradaliocomment} and \cite{mazoschi}, the proofs becoming increasingly similar to the local analogue in the case of the harmonic map equation. There are essentially two approaches which are indeed two sides of the same medal: Three commutators based on improved regularity of certain linear combinations of terms and non-local Wente estimates. For illustration, Three commutator estimates are in some sense various incarnations of estimates of operators such as:
	$$\mathcal{T}: L^{2}(\R; \R^{m}) \times \dot{H}^{1/2}(\R; \R^{m \times m}) \to \dot{H}^{-1/2}(\R; \R^{m}),$$
	defined by:
	$$\mathcal{T}(v,Q) := (-\Delta)^{1/4} (Qv) - Q (-\Delta)^{1/4} v + (-\Delta)^{1/4} Q \cdot v$$
	So this operator quantifies the failure of Leibniz' rule for the $1/4$-Laplacian. For instance, it is proven in \cite{dalioriv} that:
	$$\| \mathcal{T}(v,Q) \|_{\dot{H}^{-1/2}} \lesssim \| Q \|_{\dot{H}^{1/2}} \| v \|_{L^2}$$
	One should keep in mind that a-priori, each summand in $\mathcal{T}(v,Q)$ individually does not belong to $\dot{H}^{-1/2}$.
	
	To draw similarities with the harmonic map equation, we focus on the non-local Wente-type estimate for now:
	\begin{prop}
	\label{fracwente}
		Let $s \in (0,1)$ and $p \in (1,\infty)$. For $F \in L^{p}_{od}(\R \times \R)$ and $g \in \dot{W}^{s,p'}(\R)$, where $p'$ denotes the H\"older dual of $p$, we assume that $\div_s F = 0$. Then $F \cdot d_s g$ lies in the Hardy space $\mathcal{H}^{1}(\R)$ and we have the estimate:
		$$\| F \cdot d_s g \|_{\mathcal{H}^{1}(\R)} \lesssim \| F \|_{L^{p}_{od}(\R \times \R)} \cdot \| g \|_{\dot{W}^{s,p'}(\R)}.$$
	\end{prop}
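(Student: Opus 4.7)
The plan is to prove the estimate by duality with $BMO$, exploiting the Fefferman-Stein theorem which identifies $\mathcal{H}^1(\R)$ as the predual of $BMO(\R)$. Concretely, it suffices to show that for every test function $\varphi \in BMO(\R)$ with $\|\varphi\|_{BMO}\leq 1$,
$$\left|\int_{\R} (F\cdot d_s g)(x)\,\varphi(x)\,dx\right| \;\lesssim\; \|F\|_{L^p_{od}(\R\times\R)}\,\|g\|_{\dot{W}^{s,p'}(\R)}.$$
The first step is to unfold the definition of $F\cdot d_sg$ as a bilinear integral on $\R\times\R$ against the natural off-diagonal measure $\frac{dx\,dy}{|x-y|}$, so that the pairing with $\varphi$ takes the form $\int_{\R\times\R} F(x,y)\,d_sg(x,y)\,\varphi(x)\,\frac{dx\,dy}{|x-y|}$.

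Next I would apply the non-local Leibniz-type identity
$$\varphi(x)\,d_sg(x,y) \;=\; d_s(g\varphi)(x,y)\;-\;g(y)\,d_s\varphi(x,y),$$
which follows by adding and subtracting $g(y)\varphi(x)$ in the numerator of $d_s(g\varphi)$. Testing the divergence-free condition $\div_sF=0$ against $g\varphi$ (after an appropriate density/regularisation argument, since $g\varphi$ may only possess low fractional regularity) kills the contribution of the first summand. The whole estimate is thereby reduced to controlling the single bilinear expression
$$\int_{\R\times\R} F(x,y)\,g(y)\,d_s\varphi(x,y)\,\frac{dx\,dy}{|x-y|}.$$

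At this point H\"older's inequality in the $L^p_{od}$--$L^{p'}_{od}$ duality absorbs the factor $F$, and the task becomes to bound $\|g\cdot d_s\varphi\|_{L^{p'}_{od}}$ by $\|g\|_{\dot{W}^{s,p'}}\|\varphi\|_{BMO}$. This is the non-local analogue of the classical Coifman-Rochberg-Weiss commutator estimate and constitutes the heart of the matter. The hard part is precisely this step: since $\varphi$ has only $BMO$ regularity, the kernel $d_s\varphi(x,y)$ is not controlled in any Lebesgue space directly. My plan here is to decompose dyadically in $|x-y|\sim 2^k$, use John-Nirenberg to extract arbitrary exponential integrability of $\varphi-\varphi_{B}$ on each annulus, and sum the scales by distributing the fractional Sobolev regularity of $g$ together with a geometric factor in $2^{ks}$.

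A natural alternative, which one might run in parallel to cross-check the argument, is to establish the membership $F\cdot d_sg\in\mathcal{H}^1(\R)$ by a direct atomic decomposition: partition the kernel $F$ relative to a dyadic grid and build atoms supported on the corresponding balls, the cancellation properties of the atoms being supplied by $\div_sF=0$. This route bypasses the BMO duality but transfers essentially the same technical difficulty onto the verification of the atomic size conditions.
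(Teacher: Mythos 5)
Note first that the paper does not supply a proof of Proposition~\ref{fracwente} --- it is restated in Section~2 as Lemma~\ref{fractionalwentelemma} with the attribution ``Theorem~2.1,~\cite{mazoschi}'' and no argument --- so there is no internal proof to compare against; I assess your plan on its own merits and against what the cited result requires. Your opening (pair with $\varphi\in BMO$ via Fefferman--Stein duality, rewrite $F\cdot d_sg$ as a bilinear off-diagonal integral, apply the fractional Leibniz rule $\varphi(x)\,d_sg(x,y)=d_s(g\varphi)(x,y)-g(y)\,d_s\varphi(x,y)$, and let $\div_s F=0$ annihilate the $d_s(g\varphi)$ contribution) is the standard opening in arguments of this type and is consistent with the strategy of~\cite{mazoschi}. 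The genuine gap occurs exactly at the step you flag as ``the heart of the matter.''

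After $L^p_{od}$--$L^{p'}_{od}$ H\"older you propose to prove $\bnorm{g(y)\,d_s\varphi(x,y)}_{L^{p'}_{od}}\lesssim\|g\|_{\dot{W}^{s,p'}(\R)}\,\|\varphi\|_{BMO(\R)}$, and this inequality is false. Take $g\equiv c\neq 0$: then $\|g\|_{\dot{W}^{s,p'}}=0$ and the right-hand side vanishes, while the left-hand side equals $|c|\,\|\varphi\|_{\dot{W}^{s,p'}(\R)}$, which is strictly positive for any non-constant $\varphi\in C^\infty_c(\R)\subset BMO(\R)$. The proposition itself survives this test because $d_sg\equiv 0$ there, and because $\div_sF=0$ also annihilates the pairing of $F$ against $c\,d_s\varphi$ --- but your H\"older step has already discarded $F$, and with it the only cancellation controlling this term. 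Put differently, crude H\"older destroys the shift-invariance in $g$: the quantity you try to bound sees the raw values $g(y)$, whereas the proposition depends on $g$ only through differences. Any correct completion must subtract local means of $g$ (and of $\varphi$) scale by scale in a dyadic decomposition of the \emph{full trilinear integral}, re-invoking $\div_sF=0$ at each scale to remove the locally constant part of $g$, and only then apply John--Nirenberg to $\varphi-\varphi_B$; your John--Nirenberg idea belongs there, not in a free-standing commutator estimate on $g\cdot d_s\varphi$. Your alternative atomic-decomposition route is structurally closer to a repair, since the cancellation is then encoded into the atoms from the outset, but, as you already observe, the work reappears in verifying the size and moment conditions.
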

	The general $s$-gradient is introduced in analogy to the $1/2$-gradient and we say that $\div_{s} F = 0$, if for all $\varphi \in C^{\infty}_{c}(\R)$:
	$$\int_{\R} \int_{\R} F(x,y) d_{s} \varphi(x,y) \frac{dy dx}{| x-y |} = 0.$$
	Lastly, we define:
	$$F \cdot d_s g (x) := \int_{\R} F(x,y) d_s g(x,y) \frac{dy}{| x-y |}.$$
	These notions and results also apply for $S^1$ as a domain and we refer to \cite{mazoschi} for details.\\
	
	Regularity is now analogous to the harmonic map case, up to taking care of tail estimates emerging due to the non-locality of the equations, we just give a outline in the case $N = S^{n-1}$: Shatah-like fractional conservation laws hold
	$$\forall i, j \in \{1, \ldots, n \}: \div_{1/2} \left( u_i d_{1/2} u_j - u_j d_{1/2} u_i \right) = 0,$$
	leading to a reformulation of \eqref{halfharmsphere} as:
	$$(-\Delta)^{1/2} u_i =  \sum_j \left( u_i d_{1/2} u_j - u_j d_{1/2} u_i \right) \cdot d_{1/2} u_j + \sum_j u_j d_{1/2} u_i \cdot d_{1/2} u_j,$$
	where, however, the last term is now not vanishing as the fractional gradient is not necessarily tangential to $S^{n-1}$. Nevertheless, using $\pi$ the closest point projection extended suitably and Taylor expansion, we see:
	$$u(x) - u(y) = \pi(u(x)) - \pi(u(y)) = d\pi(u(x)) \left( u(x) - u(y) \right) + R(u(x), u(y)),$$
	with $| R(u(x), u(y)) | \lesssim | u(x) - u(y) |^2$. Noting that $d\pi(u(x))$ is the tangent projection at $u(x)$, we may deduce that the remainder may be considered to be of lower order and thus does not obstruct the argument. Then, localising and applying Wente-type estimates, we arrive again at a Morrey decrease and, by Adams embedding, at higher integrability and thus H\"older regularity. Following \cite{daliopigati}, one may bootstrap this information to arrive at smoothness of solutions.\\
	
	The considerations above hopefully convinced the reader of the close connection between $1/2$-harmonic and harmonic equations, at least in the structure of their proof and, in some sense, their relation to the geometry of the target manifold. This relation is something that we exploited in the papers \cite{wettsteinsphere}, \cite{wettstein} to establish a theory for the half-harmonic gradient flow in analogy to \cite{struwe1} and obtain a uniqueness statement in the small energy realm by extending techniques introduced in \cite{riv}. Thus, we shall embark on a short survey of some results pertaining to the harmonic gradient flow.
	
	To start, let us present the main equation: We consider functions $u: [0,T[ \times M \to N$, $M, N$ being embedded submanifolds with the induced metrics and $T \in \R \cup \{ + \infty \}$ and would like to solve the following equation:
	\begin{equation}
	\label{harmgradfloweq}
		\partial_{t} u - \Delta_{M} u = A(u)( \nabla u, \nabla u),
	\end{equation}
	with initial value $u(0, \cdot) = u_0(\cdot) \in H^{1}(M;N)$. Its relevance derives from approximations of stationary solutions (i.e. harmonic maps) and questions pertaining to the homotopy of maps like whether any given map in $H^{1}(M;N)$ is homotopic to a harmonic one. The latter question obviously not being true, one may wonder what kind of convergence and regularity properties are to be expected in finite time as well as when $t \to + \infty$. We shall see, also in the case of the half-harmonic gradient flow, that bubbling may occur and especially in the harmonic flow, a variety of different types of bubbling (finite time, infinite time, reverse bubbling) may occur, see especially the extraordinarily well-suited approach to bubbling using the inner-outer gluing scheme. We recall that, rather informally, a bubble is a harmonic map which is created by energy concentration in smaller and smaller neighbourhoods of a point after blow up using rescalings. Outside of the blow up point, the flow will behave nicely and be smooth, but at the bubbling point itself, energy accumulates and results in the formation of a so-called bubble. We refer to \cite{struwe1} for more details.\\
	
	The harmonic gradient flow was first studied by Eells and Sampson culminating in an existence result, provided that the target manifold has non-positive sectional curvature. However, the first general result that applies independent of any geometric properties (such as sectional curvature) was given by Struwe in \cite{struwe1} for two-dimensional domains and extended to arbitrary domains in \cite{struwe2}. We state the result here in a special case, as it may be found for example in \cite{riv}:
		\begin{thm}
	\label{mainstruwe1}
		Let $u_0 \in H^{1}(T^2; S^{n-1})$. Then there exists a solution $u \in H^{1}(]0, +\infty[;L^{2}(T^2))$ of the harmonic gradient flow:
		\begin{equation}
		\label{harmonicflowstruwe1}
			\partial_{t} u - \Delta u = u | \nabla u |^2 \quad \text{ in } \mathcal{D}'(]0, T[ \times T^2), \quad \forall T > 0,
		\end{equation}
		together with the boundary conditions:
		\begin{equation}
		\label{harmonicflowstruwe1bound2}
			u(0,x) = u_{0}(x), \quad	\text{ for all } x \in T^2,
		\end{equation}
		and satisfying $E(u(t, \cdot)) \leq E(u_{0})$ for all times $t \geq 0$. The solution $u$ is regular on $]0,+\infty[ \times {T^2}$, except in a finite number of points $(t_{k}, x_{k})$, $k = 1, \ldots, K$, for some $K \in \mathbb{N}$. Additionally, $u$ is unique in the class $\mathcal{E} \subset H^{1}_{loc}([0,+\infty[ \times {T^2})$ defined by:
		$$\mathcal{E} := \Big{\{} u \ \Big{|}\ \exists m \in \mathbb{N}, \exists T_0 = 0 < T_1 < \ldots < T_{m} < \infty: u \in L^{2}([T_{i}, T_{i+1}[; W^{2,2}(T^2)), \forall i \leq m-1 \Big{\}}$$
		Finally, there exists a constant $C > 0$ independent of $u_0$, such that:
		$$K \leq C \cdot E(u_0)$$
	\end{thm}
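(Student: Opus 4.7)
The plan is to follow the classical approach of Struwe \cite{struwe1} for existence and regularity and the approach of Rivière \cite{riv} for uniqueness. First, I would construct approximate solutions via a Ginzburg--Landau-type penalization: replace the constraint $u \in S^{n-1}$ by adding the potential $\epsilon^{-1}(|u|^2 - 1)^2$ to the Dirichlet energy, yielding a semilinear parabolic system
$$\partial_t u_\epsilon - \Delta u_\epsilon = -\epsilon^{-1}(|u_\epsilon|^2 - 1) u_\epsilon,$$
which admits a global smooth solution by standard parabolic theory. Multiplying by $\partial_t u_\epsilon$ and integrating yields the basic energy identity
$$\int_0^T \int_{T^2} |\partial_t u_\epsilon|^2\, dx\, dt + E_\epsilon(u_\epsilon(T, \cdot)) = E_\epsilon(u_0) = E(u_0),$$
providing uniform control of $u_\epsilon$ in $L^\infty_t H^1_x$ and of $\partial_t u_\epsilon$ in $L^2_{t,x}$ independent of $\epsilon$.

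The crucial ingredient is a small-energy $\epsilon$-regularity estimate: there exists $\epsilon_0 > 0$ such that whenever the local energy on a parabolic cylinder $[t_0, t_0 + R^2] \times B_R(x_0)$ stays below $\epsilon_0$, the approximate solution is uniformly smooth on the concentric half-cylinder, with bounds independent of $\epsilon$. This step, which I expect to be the principal obstacle, is where the quadratic structure of the nonlinearity combined with Shatah's conservation laws $\operatorname{div}(u_i \nabla u_j - u_j \nabla u_i) = 0$ becomes decisive: as sketched in the stationary case above, one rewrites the source term to uncover a divergence-curl structure and then invokes a parabolic analogue of the Wente estimate \eqref{wenteest}, absorbing the $\partial_t u$ term through a suitable Morrey decrease argument after localization.

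Granted the $\epsilon$-regularity, passing to the limit $\epsilon \to 0$ yields a weak solution $u$ with $|u| = 1$ a.e., smooth away from a relatively closed singular set $\Sigma \subset \,]0, +\infty[ \times T^2$. At every singular point $(t_k, x_k)$ at least $\epsilon_0$ units of energy must concentrate, in the sense that $\limsup_{t \nearrow t_k} E(u(t, \cdot); B_r(x_k)) \geq \epsilon_0$ for every $r > 0$. Combined with the monotonicity $E(u(t, \cdot)) \leq E(u_0)$, a covering argument forces $\Sigma$ to be locally finite with cardinality bounded by $E(u_0)/\epsilon_0$. To extend past a singular time one restarts the flow from $u(t_k^+, \cdot) \in H^1$; since each restart costs at least $\epsilon_0$ of energy, the procedure terminates after finitely many steps and yields the claimed global weak solution with the singularity bound.

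For uniqueness in the class $\mathcal{E}$, I would follow Rivière's approach \cite{riv}. Given two solutions $u, v \in \mathcal{E}$ coinciding at $t = 0$, the difference $w = u - v$ satisfies a linear parabolic equation whose right-hand side is built from the quadratic nonlinearity of \eqref{harmonicmapsphereeq}. Rewriting this right-hand side using Shatah's conservation laws together with $|u| = |v| = 1$ reveals the same div-curl structure, which together with the $L^2_t W^{2,2}_x$ regularity available on each slab $[T_i, T_{i+1}[$ is just enough to close a Gronwall estimate and conclude $w \equiv 0$ inductively on each slab. The delicate point is that the critical two-dimensional scaling leaves no slack, so the Wente-type duality must be deployed precisely to match the available regularity.
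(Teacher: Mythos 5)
Your plan is mathematically reasonable, but it diverges substantially from the route the paper attributes to Struwe and Rivi\`ere, which is the route actually used in \cite{struwe1} and \cite{riv}. Struwe's 1985 proof of the two-dimensional theorem does \emph{not} pass through a Ginzburg--Landau penalization: he first establishes local existence for smooth initial data directly (via linearization and a fixed-point argument) and then derives a priori estimates by testing the equation against $\partial_t u$, $u_t$, and second derivatives. The key compactness ingredient is not a parabolic Wente estimate but Ladyzhenskaya's interpolation inequality $\|\nabla u\|_{L^4(B_R)}^4 \lesssim \|\nabla u\|_{L^2(B_R)}^2\|\nabla^2 u\|_{L^2(B_R)}^2 + R^{-2}\|\nabla u\|_{L^2}^4$, which quantifies absolute continuity of the $L^4$-norm in terms of local energy concentration and lets one absorb the nonlinearity into the $H^2$-estimate as long as $\sup_x E(u(t);B_R(x))$ is below a universal threshold $\varepsilon_1$. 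General $H^1$ data are then handled by smoothing the initial datum rather than the equation. Your penalization route is the later Chen--Struwe scheme, designed for higher dimensions; it also works in $2$D but requires a separate $\epsilon$-independent small-energy regularity theory for the Ginzburg--Landau system, and neither Struwe nor the present paper rely on Wente-type compensation for that step (the $2$D parabolic $L^4$-interpolation is enough and is considerably more elementary). Two caveats you should address if you persist with penalization: (i) Shatah's conservation laws $\div(u_i\nabla u_j - u_j\nabla u_i)=0$ hold only for sphere-valued maps; for $u_\epsilon$ they become $\div(u_{\epsilon,i}\nabla u_{\epsilon,j} - u_{\epsilon,j}\nabla u_{\epsilon,i}) = u_{\epsilon,i}\partial_t u_{\epsilon,j} - u_{\epsilon,j}\partial_t u_{\epsilon,i}$ (the Ginzburg--Landau potential terms cancel because they are radial), so the div-curl structure is only approximate; (ii) uniqueness in the class $\mathcal{E}$ is Struwe's, not Rivi\`ere's --- for competitors already enjoying $L^2_t W^{2,2}_x$ regularity a direct Gronwall argument using Ladyzhenskaya closes without any Wente-type machinery, and Rivi\`ere's absorption lemma (Lemma \ref{improvedregriv}) is precisely the device that removes this a priori $W^{2,2}$ hypothesis, extending uniqueness to the larger energy class. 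Invoking it for the $\mathcal{E}$-uniqueness claimed in the theorem is unnecessary overkill, though not incorrect.
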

	Theorem \ref{mainstruwe1} immediately addresses existence, regularity and bubbling questions as well as providing a first uniqueness result (at least for strong solutions, i.e. solutions with sufficient regularity to make sense of the equation in the $L^2$-sense). Interesting features include the fact that bubbling may occur, but indeed, no energy is lost. So the energy as a function of time is continuous up to the creation of bubbles which account for the jump down in energy completely. This also guarantees that the amount of bubbles that form is finite.
	
	The proof of Theorem \ref{mainstruwe1} relies on testing the equation \eqref{harmonicflowstruwe1} against itself or appropriate derivatives. As a very simple example, let us test \ref{mainstruwe1} against $\partial_{t} u$, then we find:
	\begin{equation}
		\int_{0}^{T} \int_{T^2} | \partial_{t} u |^2 dx dt + E(u(T)) - E(u_0) = \int_{0}^{T} \int_{T^2} | \partial_{t} u |^2 dx dt + \int_{0}^{T} \int_{T^2} \partial_{t} \left( \frac{1}{2} | \nabla u |^2 \right) dx dt = 0,
	\end{equation}
	since $\partial_t u$ is tangential to $S^{n-1}$, while $u$ is perpendicular to the tangent space. This shows for example energy decrease for regular solutions. Similar applications of such test functions lead to identities and estimates after absorption (using, for instance, quantification of absolute continuity of $L^4$-norms) depending only on energy concentration in balls and therefore, thanks to good control of the energy for a finite amount of time, one is able to prove uniform estimates for regular solutions (for example regarding the $H^2$-norm of $u(t)$ at a fixed time). By approximation, the result extends to arbitrary initial values, no matter their regularity. If the energy concentration is however too big, a bubble will form and by rescaling around bubbling points and extracting subsequences, one finds that bubbles $v: S^{2} \to N$ form which are indeed harmonic maps.\\
	
	There are several natural questions to ask from here: Firstly, one may wonder if bubbling in finite time is actually possible. Indeed, this question is answered in \cite{chang} by using explicit solutions in the corotational setting. Indeed, they construct subsolutions blowing up in finite time and prove that appropriate boundary conditions exist to transfer the blow up to a solution of the harmonic map flow.
	
	Another question pertains to whether the energy decay is necessary as an assumption. Indeed it is, without this no uniqueness statement is possible. For instance, in \cite{topping}, other kinds of blow ups violating the monotone decay of energy are constructed by using so-called reverse bubbling and prove existence of "non-physical" solutions. Furthermore, various types of blow ups may be considered using the inner-outer gluing scheme, studied by del Pino, Musso, Wei and many other authors. It should be stated that a kind of non-uniqueness phenomena can already be observed for the linear heat equation in $\R^n$ where we need some decay at $\infty$ to ensure uniqueness, so these kinds of issues are not unexpected.
	
	Lastly, does uniqueness also hold among weak solutions? Indeed, this is true, at least for solutions with non-increasing energy (otherwise counterexamples via reverse bubbling exist) and has been shown by Rivi\`ere \cite{riv} and Freire \cite{freire}. Rivi\`ere's argument works for the small energy regime and for the target manifold $S^{n-1}$ and employs an ingenious absorption argument that allows to deduce that the solution is actually a solution in the strong sense. Indeed, the key result may be stated as follows:
	\begin{lem}
	\label{improvedregriv}
		Let $u \in H^{1}(T^2;S^{n-1})$ and $f \in L^{2}(T^2; \R^n)$ and assume that $u$ solves the following non-linear quadratic PDE:
		\begin{equation}
			- \Delta u = u | \nabla u |^2 + f
		\end{equation}
		Then $u \in H^{2}(T^2;S^{n-1})$.
	\end{lem}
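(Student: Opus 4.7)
The plan is to exploit the hidden Jacobian structure Shatah uncovered in the pure harmonic map equation, tracking how the inhomogeneity $f$ perturbs the underlying conservation law. Using the sphere constraint $|u|^{2}\equiv 1$ and hence $\sum_{j} u_{j}\nabla u_{j}=0$ pointwise, I rewrite the right-hand side, exactly as in the excerpt's derivation of Shatah's laws, as
\begin{equation*}
 u_{i}|\nabla u|^{2}=\sum_{j}\left(u_{i}\nabla u_{j}-u_{j}\nabla u_{i}\right)\cdot\nabla u_{j}.
\end{equation*}
Introducing the antisymmetric vector fields $V_{ij}:=u_{i}\nabla u_{j}-u_{j}\nabla u_{i}\in L^{2}(T^{2};\R^{2})$, the PDE becomes $-\Delta u_{i}=\sum_{j} V_{ij}\cdot\nabla u_{j}+f_{i}$. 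Unlike the pure harmonic map case, $V_{ij}$ is only \emph{almost} divergence-free: using the equation itself, $\div V_{ij}=u_{i}\Delta u_{j}-u_{j}\Delta u_{i}=u_{j} f_{i}-u_{i} f_{j}$, which lies in $L^{2}$ and is entirely controlled by $f$.

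Next I apply the Hodge/Helmholtz decomposition on $T^{2}$: write $V_{ij}=\nabla^{\perp} B_{ij}+\nabla\phi_{ij}$, where $-\Delta\phi_{ij}=u_{j} f_{i}-u_{i} f_{j}\in L^{2}$, so that $\phi_{ij}\in H^{2}$ and $\nabla\phi_{ij}\in L^{p}$ for every finite $p$, while $-\Delta B_{ij}=\partial_{1}u_{i}\partial_{2}u_{j}-\partial_{2}u_{i}\partial_{1}u_{j}$ is a Jacobian of two $H^{1}$ functions. Substituting back,
\begin{equation*}
 -\Delta u_{i}=\sum_{j}\nabla^{\perp}B_{ij}\cdot\nabla u_{j}+\sum_{j}\nabla\phi_{ij}\cdot\nabla u_{j}+f_{i}.
\end{equation*}
The integer analogue of Proposition \ref{fracwente} (the Wente/CLMS estimate) applied locally yields $\nabla B_{ij}\in L^{2,1}_{loc}$ and $\nabla^{\perp}B_{ij}\cdot\nabla u_{j}\in\mathcal{H}^{1}_{loc}$, so elliptic regularity upgrades the Jacobian contribution to give $\nabla u\in L^{2,1}_{loc}$. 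The second sum lies in $L^{q}_{loc}$ for every $q<2$ (since $\nabla\phi_{ij}\in L^{p}$ for all $p<\infty$ while $\nabla u_{j}\in L^{2}$), and $f_{i}\in L^{2}$ by assumption.

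From here I bootstrap: once $\nabla u\in L^{2+\eps}_{loc}$ for some $\eps>0$, the pointwise bound $|u|=1$ promotes $u|\nabla u|^{2}$ into $L^{1+\eps/2}_{loc}$, which, re-injected into the decomposition above, improves all three contributions. After finitely many such iterations one reaches $\nabla u\in L^{4}_{loc}$, whence $u|\nabla u|^{2}\in L^{2}_{loc}$ and therefore $\Delta u\in L^{2}(T^{2})$, i.e.\ $u\in H^{2}(T^{2};S^{n-1})$.

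The main obstacle is the absorption step when the global Dirichlet energy of $u$ is not small: Wente's estimate only absorbs the critical contribution on balls where $\int|\nabla u|^{2}$ sits below a universal threshold. One thus covers $T^{2}$ by finitely many such balls, which is possible by absolute continuity of $|\nabla u|^{2}$, performs the decomposition against a suitable cutoff on each ball, and glues the resulting local $H^{2}$-bounds. Keeping the boundary terms produced by the cutoffs in admissible spaces, and ensuring that the $\nabla\phi_{ij}$ piece retains the required integrability on each ball, is precisely Rivi\`ere's absorption trick from \cite{riv}, and it is the step that demands the most care.
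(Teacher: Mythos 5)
Your setup is the right one and parallels the paper's proof of the half-harmonic analogue (Lemma \ref{reglemma}): you correctly derive the perturbed conservation law $\div V_{ij}=u_{j}f_{i}-u_{i}f_{j}$, use the Hodge decomposition $V_{ij}=\nabla^{\perp}B_{ij}+\nabla\phi_{ij}$ to isolate a divergence-free piece feeding the Wente/CLMS compensation, and correctly identify the $\nabla\phi$-contribution as a lower-order term with $\nabla\phi_{ij}\in L^{p}$ for every $p<\infty$. You also correctly anticipate the need to localize on small-energy balls. This is the decomposition Rivi\`ere carries out.

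The gap is in the bootstrap. From the Jacobian piece together with CLMS and elliptic regularity you obtain $\nabla u\in L^{2,1}_{loc}$, but two sentences later you start the iteration from ``once $\nabla u\in L^{2+\varepsilon}_{loc}$''. These are not the same thing: $L^{2,1}$ is a logarithmic refinement of $L^{2}$ and does \emph{not} embed into $L^{2+\varepsilon}$ for any $\varepsilon>0$, so the bootstrap never ignites. Moreover, re-injecting $\nabla u\in L^{2,1}$ into the decomposition only gives you back $L^{2,1}$ again (the Jacobian of two $L^{2,1}$ factors lands in $L^{1,1/2}\subset L^{1}$, which after inverting the Laplacian is no better than what you already had), so the iteration as written is stationary. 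The mechanism that actually produces $L^{p}$ integrability for every $p<\infty$ is not an iteration of integrability exponents but the invertibility trick from \cite{riv}, which is also what the paper implements for Lemma \ref{reglemma}: approximate $\nabla^{\perp}B_{ij}$ by a smooth, still divergence-free $\tilde{\Omega}$ within $\varepsilon$ in $L^{2}$; observe that the perturbed map $v\mapsto v-(-\Delta)^{-1}\bigl((\nabla^{\perp}B-\tilde\Omega)\cdot\nabla v\bigr)$ is an isomorphism of $W^{1,p}$ onto itself for each fixed $p\geq 2$ once $\varepsilon$ is small (precisely because of the Wente/CLMS gain you already invoked), and that the remainder involving the smooth $\tilde\Omega$ lands in $W^{1,p}$ for every $p<\infty$. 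Solving and identifying the solution with $u$ then yields $\nabla u\in L^{p}_{loc}$ for all $p<\infty$ in one step, after which $p=4$ gives $u|\nabla u|^{2}\in L^{2}$ and hence $u\in H^{2}$.

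A minor point: the sign in $-\Delta\phi_{ij}=u_{j}f_{i}-u_{i}f_{j}$ should be $\Delta\phi_{ij}=u_{j}f_{i}-u_{i}f_{j}$ (since $\div V_{ij}=\Delta\phi_{ij}$ in your Hodge split), and $\rot V_{ij}$ carries a factor $2$ in front of the Jacobian; neither affects the argument.
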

	It should be noted that Lemma \ref{improvedregriv} gives the maximal amount of regularity one may expect from a general $u$ solving such an equation. In the context of the harmonic gradient flow, it may be applied to:
	$$- \Delta u(t) = u(t) | \nabla u(t) |^2 - \partial_{t} u(t),$$
	which holds, if $u \in H^{1}(T^2;S^{n-1})$ is a weak solution, for almost all times $t$. Thus, we deduce that for all such times $u(t) \in H^{2}(T^2;S^{n-1})$. However, this does not suffice to prove uniqueness, as we need an $L^{2}$-bound on the $H^{2}$-norms when integrated over time. This follows however immediately by using Ladyzhenskaya's estimate to conclude:
	$$\| \nabla u(t) \|_{L^{4}} \lesssim \| \nabla u(t) \|_{L^2} \| \nabla^2 u(t) \|_{L^2} \lesssim E(u_0) \| u(t) \|_{H^{2}},$$
	so we arrive at, by using elliptic regularity as well as $| u(t) | = 1$ almost everywhere:
	$$\| u(t) \|_{H^{2}} \lesssim \| u(t) \|_{L^2} + \Big{\|} u(t) | \nabla u(t) |^2 - \partial_{t} u(t) \Big{\|}_{L^2} \lesssim 1 + E(u_0) \| u(t) \|_{H^{2}} + \| \partial_{t} u(t) \|_{L^2},$$
	so if $E(u_0)$ is sufficiently small, the $H^2$-norm on the RHS may be absorbed in the LHS and by integrating in time, the desired local integrability follows and allows for the application of Theorem \ref{mainstruwe1} to conclude.
	
	The proof of Lemma \ref{improvedregriv} may be found in \cite{riv} or \cite{wettsteinsphere} for the half-harmonic gradient flow where a natural analogue holds. Indeed, the techniques discussed so far naturally generalise to the framework of the half-harmonic gradient flow. The motivation to study this gradient flow stems once more from approximation of solutions to the half-harmonic map equation as well as the interest in expanding ideas from the local world to the fractional one. This is what the author has done in \cite{wettsteinsphere} for the case of the target manifold being a sphere and in \cite{wettstein} for the target manifold being any closed $N$. Since we shall restrict our considerations later on to the case $N = S^{n-1}$ anyways, let us focus on this special case, where the equation takes the form:
	$$\partial_{t} u + (-\Delta)^{1/2} u = u | d_{1/2} u |^2$$
	The most natural formulation of the half-harmonic gradient flow equation in $N$ is phrased as:
	$$\partial_t u + (-\Delta)^{1/2} u \perp T_u N \quad \text{ in } \mathcal{D}'([0,T[ \times S^{1})$$
	First, one may wonder what was known about the half-harmonic gradient flow before \cite{wettsteinsphere}. In \cite{schisirewang}, the authors had already constructed solutions to the gradient flow of various non-local energies of similar type as the $1/2$-Dirichlet energy by discretisation. Unfortunately, the result was limited to target spaces with inherent symmetry such as the sphere due to the limits taken. In a different paper \cite{sireweizheng}, the bubbling as $t \to + \infty$ was investigated by adapting the inner-outer gluing scheme to the non-local framework, establishing that bubbling is possible for $N = S^{1}$ at time $t = + \infty$. The authors of \cite{sireweizheng} further conjectured that bubbling may actually only occur asymptotically, so no finite time bubbling is possible due to dimensional peculiarities of $\R$ and $S^1$. The conjecture is, according to our best knowledge, still open and under investigation.
	
	Let us now turn to the main result as found in \cite{wettsteinsphere} that answered some of the questions about the half-harmonic gradient flow:
	\begin{thm}
	\label{mainresjw}
		Let $u_0 \in H^{1/2}(S^1;S^{n-1})$ be any initial data. There exists $\varepsilon > 0$, such that if:
		\begin{equation}
			E_{1/2}(u_0) \leq \varepsilon,
		\end{equation}
		then there exists a unique energy class solution $u: \R_{+} \times S^1 \to S^{n-1} \subset \R^n$ of the weak fractional harmonic gradient flow:
		\begin{equation}
		\label{gradflowresjw}
			\partial_t u + (-\Delta)^{1/2} u = u | d_{1/2} u |^2,
		\end{equation}
		satisfying $u(0, \cdot) = u_0$  in the sense $u(t,\cdot) \to u_0$ in $L^2$, as $t \to 0$. Moreover, the solution fulfills the energy decay estimate:
		$$E_{1/2}(u(t)) \leq E_{1/2}(u(s)) \leq E_{1/2}(u_0), \quad \forall t \geq s \in [0, +\infty [.$$
		In fact, $u \in C^{\infty}(]0,\infty[ \times S^1)$ and for an appropriate subsequence $t_k \to \infty$, the sequence $u(t_k)$ converges weakly in $H^{1}(S^1)$ to a point.
		
		If $u_0$ has arbitrary energy, then we still get the existence of a solution to \eqref{gradflowresjw} on some time interval $[0,T[$, where $T = T(u_0) > 0$ depends on the initial datum. There exists a similar characterisation of $T(u_0)$ as in \cite{struwe1}
		\begin{equation}
		\label{concentrationcond}
			\limsup_{t \to T} \varepsilon(R; u, t) \geq \varepsilon_1, \quad \forall R \in ]0,\frac{1}{2}[,
		\end{equation}
		where:
		\begin{equation}
			\varepsilon(R;u,T) := \sup_{x \in S^1, t \in [0,T]} E_{R}(u;x,t) = \sup_{x \in S^1, t \in [0,T]} \frac{1}{2} \int_{B_{R}(x)} | (-\Delta)^{1/4} u(t) |^2 dx,
		\end{equation}
		and $\varepsilon_1 > 0$ is a quantity appearing in the proof of the result and is independent of $u_0, R, T$.
	\end{thm}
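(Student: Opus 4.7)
The plan is to adapt Struwe's blueprint from Theorem \ref{mainstruwe1} to the non-local setting, with Proposition \ref{fracwente} and the Shatah-type fractional conservation laws playing the roles that Wente's estimate and $\div(u_i \nabla u_j - u_j \nabla u_i) = 0$ play in the classical case. First I would construct local-in-time approximate solutions, for instance by a Ginzburg--Landau penalisation
\begin{equation*}
\partial_t u_{\varepsilon} + (-\Delta)^{1/2} u_{\varepsilon} = -\varepsilon^{-2}\bigl(|u_{\varepsilon}|^2 - 1\bigr) u_{\varepsilon},
\end{equation*}
or via the discretisation scheme of \cite{schisirewang}, and test the approximate equation against $\partial_t u_{\varepsilon}$ to produce the basic identity
\begin{equation*}
\int_0^T \int_{S^1} |\partial_t u_{\varepsilon}|^2 \, dx \, dt + E_{1/2}(u_{\varepsilon}(T)) \leq E_{1/2}(u_0).
\end{equation*}
Compactness in $L^2$ from the uniform $H^{1/2}$-bound lets me pass to the limit and obtain an energy-class weak solution of \eqref{gradflowresjw} on a maximal interval $[0, T(u_0))$.

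The heart of the argument is a quantitative $\varepsilon$-regularity statement: if $\varepsilon(R; u, t) \leq \varepsilon_1$ on a time slab, then $u$ is smooth there with bounds depending only on $E_{1/2}(u_0)$ and $\|\partial_t u\|_{L^2_{t,x}}$. I would obtain this by rewriting the right-hand side of \eqref{gradflowresjw} using the Shatah-type conservation laws $\div_{1/2}(u_i d_{1/2} u_j - u_j d_{1/2} u_i) = 0$ together with the closest-point projection expansion $u(x)-u(y) = d\pi(u(x))(u(x)-u(y)) + O(|u(x)-u(y)|^2)$, which exposes a $\div_{1/2}$--$d_{1/2}$ structure on the principal part and a harmless quadratic remainder. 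Localising in $x$, splitting into a half-harmonic extension plus a zero-trace correction, and invoking Proposition \ref{fracwente} on the corrective piece yields Morrey-type decay of $(-\Delta)^{1/4} u$; an Adams-type embedding upgrades this to $L^p$ for some $p > 2$, and a bootstrap gives smoothness. The non-local analogue of Lemma \ref{improvedregriv} proved in \cite{wettsteinsphere} then delivers a pointwise-in-time bound of the form $\|u(t)\|_{H^1} \lesssim 1 + \|\partial_t u(t)\|_{L^2}$ whose square is integrable in $t$.

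Uniqueness in the small-energy regime would follow by transplanting Rivi\`ere's absorption argument to $(-\Delta)^{1/2}$: for two energy-class solutions $u, v$ with common initial datum the difference $w = u - v$ obeys a linear equation whose right-hand side is controlled by $E_{1/2}(u_0)^{1/2} \|w\|$ in the appropriate norm, and a Gronwall-type step closes the estimate once $E_{1/2}(u_0) \leq \varepsilon$ is small enough. Energy decay is a clean limit of the approximate identity using that $\partial_t u \in T_u S^{n-1}$ while the normal part of $(-\Delta)^{1/2} u$ absorbs the Euler--Lagrange obstruction. Since $\varepsilon(R; u, t) \leq E_{1/2}(u(t)) \leq \varepsilon < \varepsilon_1$ for $\varepsilon$ chosen small, the $\varepsilon$-regularity applies for all times and iterates to a global smooth solution. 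From $\int_0^\infty \|\partial_t u\|_{L^2}^2 \, dt < \infty$ one extracts $t_k \to \infty$ with $\partial_t u(t_k) \to 0$ in $L^2$, whence any weak $H^{1/2}$-limit is a half-harmonic map of small energy, and the rigidity of such maps on $S^1$ (only constants below a threshold) forces convergence to a point.

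For arbitrary initial data, choose $R$ so small that $\sup_{x} E_R(u_0; x) \leq \varepsilon_1/2$; by continuity in time this persists on some $[0, T_0]$, and the $\varepsilon$-regularity then gives a solution on a maximal interval $[0, T(u_0))$. If $T(u_0) < \infty$ but \eqref{concentrationcond} failed for some $R_0 \in (0, 1/2)$, the $\varepsilon$-regularity would deliver uniform smooth bounds up to $T(u_0)$ and allow a restart past it, contradicting maximality. The principal obstacle, in my view, is the non-locality: unlike the Laplacian, $(-\Delta)^{1/2}$ does not localise cleanly, so the splitting into half-harmonic extension and zero-trace part produces tail terms that must be controlled in the spirit of \cite{mazoschi}, and only a careful choice of cut-offs together with the systematic replacement of $\nabla$ by $d_{1/2}$ makes the $\varepsilon$-regularity via Proposition \ref{fracwente} actually close.
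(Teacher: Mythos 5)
Your outline tracks the Struwe blueprint and correctly identifies all the principal ingredients that the paper (and the underlying \cite{wettsteinsphere}) uses: a local existence result for smooth data, an $\varepsilon$-regularity statement conditioned on the quantity $\varepsilon(R;u,t)$, continuation as long as no concentration occurs, the characterisation \eqref{concentrationcond} of $T(u_0)$, the improved-regularity lemma in the small-energy regime to upgrade weak to strong solutions, and extraction of a half-harmonic limit at $t=\infty$. It is essentially sound as a plan, but the two central steps are taken by genuinely different routes than the paper.

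\textbf{Local existence.} The paper constructs short-time smooth solutions for smooth $u_0$ by linearising and applying the Inverse Function Theorem (with Fredholm-index considerations) in \cite{wettsteinsphere}, and in Section \ref{existence} of the present paper by a Banach fixed-point argument for the Duhamel map $S$ in $W^{1,p}_{u_0}$, $p>4$. Your alternatives -- Ginzburg--Landau penalisation or the discretisation of \cite{schisirewang} -- are plausible but not what is done, and the discretisation approach is explicitly flagged in the paper as limited (although it does apply to $S^{n-1}$). More importantly, both of your proposals output a \emph{weak} solution, so you then need to already have the $\varepsilon$-regularity machinery in place to conclude smoothness, whereas the paper's route produces \emph{smooth} short-time solutions directly and then propagates smoothness. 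The fixed-point/IFT route also transparently yields the characterisation of $T(u_0)$ via \eqref{concentrationcond}, because loss of the uniform $H^2$-type estimates from the testing argument is exactly what forces concentration.

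\textbf{$\varepsilon$-regularity.} You propose to obtain $\varepsilon$-regularity by the same Shatah--Wente--Morrey--Adams bootstrap that establishes regularity of stationary half-harmonic maps, treating $-\partial_t u$ as an $L^2$ forcing. This is precisely Lemma \ref{reglemma} (Rivi\`ere's improved-regularity lemma), and it does give $u(t)\in H^1$ for a.e.\ fixed $t$. The gap is that this argument is elliptic and fixed-time: it does not by itself control time derivatives, so some additional parabolic step is needed before you can claim smoothness jointly in $(t,x)$. The paper avoids this by using a different mechanism for the $\varepsilon$-regularity: a Struwe-type \emph{space-time} Ladyzhenskaya interpolation (Lemma \ref{improvedlemmajw} and Lemma \ref{lemmaestpaper}) that bounds $\int_0^T\!\int |(-\Delta)^{1/4} u|^4$ by the supremal energy concentration $\varepsilon(R;u,T)$ times $\int_0^T\!\int |(-\Delta)^{1/2}u|^2 + R^{-2}\int_0^T\!\int|(-\Delta)^{1/4}u|^2$. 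When $\varepsilon(R;u,T)$ is small, this produces a time-uniform $L^2_t H^1_x$ bound by absorption, from which a genuine parabolic bootstrap closes. Your Wente-based route is reserved in the paper for the weak-uniqueness argument; for existence and smoothness of the flow itself the paper uses testing-and-absorption. If you want to keep your plan, you should either explain how to feed the fixed-time $H^1$ gain into a quantitative parabolic estimate, or switch to the testing route.

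\textbf{Uniqueness.} You mention both Rivi\`ere's improved-regularity lemma and a Gronwall closure, but slightly conflate them: the absorption argument (Lemma \ref{reglemma} plus the fractional Ladyzhenskaya inequality) upgrades any energy-class solution with small energy to the strong class $L^2_t H^1_x$; the Gronwall/uniqueness step is then inherited from the Struwe-type existence theorem for strong solutions, not run directly on the $H^{1/2}$-difference of two weak solutions. As stated, the claim that $w=u-v$ obeys a closable Gronwall estimate for two \emph{energy-class} solutions is not justified -- this was exactly the obstruction Rivi\`ere's trick was invented to circumvent. Separate the two stages cleanly and the argument matches the paper.
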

	The latter half of Theorem \ref{mainresjw} is implicit in \cite{wettsteinsphere} due to the nature of the local existence proof. As already hinted at earlier, the entire result continues to hold true if we use an arbitrary $N$ instead of $S^{n-1}$. As the proof of this result is similar in spirit, but different in various technical and computational ways, we do not go into some details of the proof here, but we just state that testing \eqref{gradflowresjw} against $u$ and its derivatives again yields the desired control and thus one may deduce existence and regularity similar to \cite{struwe1}, once we have a sufficient local existence theory for smooth $u_0$. For example, again testing against $\partial_t u$ shows:
	\begin{align}
	\int_{0}^{T} \int_{S^1} &| \partial_{t} u |^2 dx dt + E_{1/2}(u(T)) - E_{1/2}(u_0) \notag \\
		&= \int_{0}^{T} \int_{S^1} | \partial_{t} u |^2 dx dt + \int_{0}^{T} \int_{S^1} \partial_{t} \left( \frac{1}{2} | (-\Delta)^{1/4} u |^2 \right) dx dt = 0,
	\end{align}
	again using the fact that $\partial_{t} u \in T_{u} S^{n-1} \perp u$. Local existence can be obtained by linearisation, the Inverse Function Theorem and elliptic regularity considerations as well as a bootstrap technique. Later on in the current work, we shall provide a second proof based on a fixed-point argument that again allows us to show existence for a short period of time. Furthermore, the argument in \cite{riv} generalises thanks to the following Lemma:
	\begin{lem}
	\label{reglemma}
		Let $f \in L^{2}(S^1;\R^n)$ and assume that $u \in H^{1/2}(S^1;S^{n-1})$ solves the following equation:
		\begin{equation}
		\label{eqforimprovedreg}
			(-\Delta)^{1/2} u = u | d_{1/2} u |^{2} + f.
		\end{equation}
		Then, we have the following improved regularity property:
		$$u \in H^{1}(S^1;S^{n-1}).$$
	\end{lem}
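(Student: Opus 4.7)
The plan is to adapt Rivi\`ere's absorption strategy from \cite{riv} to the fractional setting, with Proposition \ref{fracwente} playing the role of the classical Wente estimate and the fractional Shatah conservation laws providing the compensation structure. First I would decompose the nonlinearity componentwise: since $|u|^2\equiv 1$ yields $\sum_j u_j(x)\bigl(u_j(x)-u_j(y)\bigr)=\tfrac{1}{2}|u(x)-u(y)|^2$, an antisymmetrisation gives, for each $i$,
\begin{equation*}
    u_i\,|d_{1/2}u|^2(x) \;=\; \sum_j F_{ij}\cdot d_{1/2}u_j(x) \;+\; R_i(x),
\end{equation*}
where $F_{ij}(x,y):=u_i(x)\,d_{1/2}u_j(x,y)-u_j(x)\,d_{1/2}u_i(x,y)$ satisfies the fractional Shatah conservation law $\div_{1/2} F_{ij}=0$ (verified as in the stationary case by testing and using $|u|=1$), and $R_i$ gathers the cubic remainder produced by the sphere constraint together with the Taylor correction $R(u(x),u(y))$ from the closest-point projection recalled in the excerpt, both controlled pointwise by $|u(x)-u(y)|^2$.

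Next I would apply Proposition \ref{fracwente} with $p=p'=2$ to each product $F_{ij}\cdot d_{1/2}u_j$, placing the structured part of the right-hand side in $\mathcal{H}^1(S^1)$ with norm bounded by $E_{1/2}(u)$. The cubic remainder $R_i$ can be handled by H\"older's inequality combined with the Sobolev embedding $H^{1/2}(S^1)\hookrightarrow L^q(S^1)$, valid for every $q<\infty$, which places it in $L^p(S^1)$ for some $p>1$. Together with the hypothesis $f\in L^2$, this already yields a first mild improvement $(-\Delta)^{1/2}u\in \mathcal{H}^1+L^p$ for some $p>1$, and hence by inversion $u$ gains regularity in a Lorentz-type scale beyond $H^{1/2}$.

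The decisive step is a bootstrap absorption argument on small-energy arcs. Since $E_{1/2}(u)<\infty$, absolute continuity of Lebesgue integrals produces a finite cover of $S^1$ by arcs on which the localised energy is below any prescribed threshold $\varepsilon_0$. Localising the equation by a smooth cutoff $\chi$, one obtains an equation for $\chi u$ whose right-hand side, modulo commutator remainders, is of the same Wente type but with implicit constant controlled by $\varepsilon_0$. Choosing $\varepsilon_0$ small enough, the quadratic contribution is reabsorbed into the left-hand side, leaving a term in $L^2$ coming from $\chi f$ and the lower-order pieces. Inverting $(-\Delta)^{1/2}$ yields $\chi u\in H^1$ on each arc; summing over the cover gives $u\in H^1(S^1;S^{n-1})$, after possibly iterating once to upgrade $R_i$ from $L^p$ to $L^2$.

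The main obstacle I foresee is the control of the non-local commutator generated by the cutoff $\chi$: unlike in the local case, multiplication by $\chi$ does not commute with $(-\Delta)^{1/2}$, and the resulting error is global in $x$. The three-commutator estimates of Da Lio--Rivi\`ere, of the form $\|\mathcal{T}(v,Q)\|_{\dot H^{-1/2}}\lesssim \|Q\|_{\dot H^{1/2}}\|v\|_{L^2}$ recalled in the excerpt, are precisely the tool that keeps this commutator in a space compatible with the absorption step; once they are in place, the argument parallels the local proof of Lemma \ref{improvedregriv} in \cite{riv}.
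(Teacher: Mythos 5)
Your decomposition begins along the right lines, but there are two substantive gaps that prevent the argument from closing, and the paper's actual proof routes around them in a way your proposal does not.

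\medskip
\noindent\textbf{The conservation law is not exact when $f\neq 0$.}
You assert that $F_{ij}(x,y)=u_i(x)\,d_{1/2}u_j(x,y)-u_j(x)\,d_{1/2}u_i(x,y)$ satisfies $\div_{1/2}F_{ij}=0$ ``verified as in the stationary case.'' That is only true when $f\equiv 0$. For a solution of \eqref{eqforimprovedreg} with a general $f\in L^2$, testing the equation (as the paper does at the start of Section \ref{uniqueness}) yields
\begin{equation*}
\div_{1/2}\bigl(u_i\,d_{1/2}u_j-u_j\,d_{1/2}u_i\bigr)=u_i f_j-u_j f_i,
\end{equation*}
which is nonzero in general. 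Consequently Proposition \ref{fracwente} \emph{cannot} be applied directly to $F_{ij}\cdot d_{1/2}u_j$, and the claim ``$(-\Delta)^{1/2}u\in\mathcal{H}^1+L^p$'' already fails at this first step. The paper's new ingredient, entirely absent from your proposal, is to solve $(-\Delta)^{1/2}\psi_{ij}=u_i f_j-u_j f_i$ (giving $\psi_{ij}\in H^1$, antisymmetric in $i,j$) and pass to $\Omega_{ij}:=u_i\,d_{1/2}u_j-u_j\,d_{1/2}u_i-d_{1/2}\psi_{ij}$, which \emph{is} $1/2$-divergence free; the extra term $d_{1/2}\psi\cdot d_{1/2}u$ is then put into $L^q$, $q<2$, by Sobolev embedding and folded into a modified source $\tilde f$.

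\medskip
\noindent\textbf{The small-energy localisation by cutoffs is not the paper's argument, and the commutator obstacle you flag is not resolved.}
Your ``decisive step'' is to cover $S^1$ by arcs of small localised energy and absorb via a cutoff $\chi$, acknowledging that control of $[\chi,(-\Delta)^{1/2}]$ is the main unresolved issue. The three-commutator estimate you cite bounds $\mathcal{T}(v,Q)=(-\Delta)^{1/4}(Qv)-Q(-\Delta)^{1/4}v+(-\Delta)^{1/4}Q\cdot v$, a failure-of-Leibniz quantity for $(-\Delta)^{1/4}$, and it is not the tool that controls the nonlocal tail of $(-\Delta)^{1/2}(\chi u)$ in terms of localised data; you have correctly identified the obstacle but not supplied a mechanism to overcome it, so the absorption step does not close. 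The paper avoids localisation entirely: with $\Omega$ now exactly divergence-free, it approximates $\Omega$ \emph{globally} in $L^2_{od}$ by a smooth, $1/2$-divergence-free $\tilde\Omega$ (by truncating away from the diagonal, mollifying, and subtracting $d_{1/2}h_\delta$ where $(-\Delta)^{1/2}h_\delta=\div_{1/2}\Omega_\delta$), and approximates $u$ by a smooth $\tilde u$ in $H^{1/2}$. The equation is then recast so that the ``small'' quantities $(\Omega-\tilde\Omega)\cdot d_{1/2}v$ and $T(v,u-\tilde u,u-\tilde u)$ appear on the left; the operator $\tau(v)=v-(-\Delta)^{-1/2}\bigl((\Omega-\tilde\Omega)\cdot d_{1/2}v+T(v,u-\tilde u,u-\tilde u)\bigr)$ is shown to be invertible on $\dot F^{1/2}_{p,2}(S^1)$ for each $p\geq 2$ once $\varepsilon$ is small. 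Uniqueness forces $u\in\dot F^{1/2}_{p,2}$ for all $p$, hence $|d_{1/2}u|^2\in L^2$, hence $(-\Delta)^{1/2}u\in L^2$, hence $u\in H^1$. This global approximation argument is genuinely different from your cutoff scheme and is precisely what sidesteps the nonlocal commutator issue; without it, your proposal remains incomplete.
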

	The proof of uniqueness for weak solutions with sufficiently small energy now follows as before in the local case by absorption and employing a fractional Ladyzhenskaya-type estimate that may be proven by means of Fourier coefficients.
	
	For completeness' sake, we would like to mention that in a later work \cite{hydersire}, the authors consider an alternative version of the half-harmonic heat flow not based on the $L^{2}$-gradient flow, but rather on ideas similar to the connection noticed by Caffarelli and Silvestre between fractional Laplacians and Dirichlet-to-Neumann and reading:
	$$(\partial_{t} - \Delta)^{1/2} u \perp T_{u} N,$$
	which allows for a monotonicity formula. Such a formula is unknown in the case of the flow \eqref{gradflowresjw}. It is obvious that both flows allow for the same stationary solutions, namely half-harmonic maps, but the approaches are independent.\\
	
	In our current work, we will use the characterisation \eqref{concentrationcond} above in order to state and prove results regarding bubbling. In fact, we shall show that at most finitely many points exist where bubbles form. By observing that the bubbles are non-constant half-harmonic maps, we may further show that there must localise a quantum of the $1/2$-Dirichlet energy which gets removed from the flow. To summarise, we shall obtain:
	\begin{thm}
	Let $u$ be a solution as in Theorem \ref{mainresjw} and let $x_0 \in S^1$ be a point, such that:
	\begin{equation}
	\label{concentrationinpt}
		\limsup_{t \to T} \int_{B_{R}(x_0)} | (-\Delta)^{1/4} u |^2 dx \geq \varepsilon_1, \quad \forall R > 0,
	\end{equation}
	where $\varepsilon_1 > 0$ is as in \cite[Lemma 4.10]{wettstein}. Then there exists a half-harmonic map $v: \R \to S^{n-1}$, such that:
	\begin{equation}
		u_n \to v \quad \text{ weakly in } H^{1}(\R) \text{ and strongly in } H^{1/2}(\R),
	\end{equation}
	where $u_n$ is a suitable rescaling and translation of $u$.
	\end{thm}
Therefore, by using such considerations, we may conclude that the solution constructed in \cite{wettstein} may be extended by $L^2$-continuity and will be smooth except for finitely many times $0 < T_1 < \ldots < T_n < + \infty$, which may be characterised by concentration identities as in Theorem \ref{mainresjw}:
	\begin{thm}
		Let $u_0 \in H^{1/2}(S^1;S^{n-1})$, then there exists a weak solution of \eqref{gradflowresjw} with non-increasing $1/2$-Dirichlet energy:
		$$u: [0, +\infty [ \times S^1 \to N,$$
		with $u \in L^{\infty}([0,+\infty[; H^{1/2}(S^1;S^{n-1})) \cap H^{1}([0,+\infty[;L^{2}(S^1;S^{n-1}))$ such that, except for finitely many times $0 < T_1 < \ldots < T_{n} < T_{n+1} := + \infty$, the function $u$ is smooth:
		$$u \in C^{\infty}(]T_k, T_{k+1}[; S^{n-1}), \quad \forall k = 1, \ldots n.$$
		Moreover, we may bound the number $n = n(u_0)$ as follows:
		$$n(u_0) \leq \frac{E(u_0)}{\varepsilon_0},$$
		where $\varepsilon_0 > 0$ is the minimum amount of $1/2$-energy a non-constant, half-harmonic map with values in $S^{n-1}$ must possess.
	\end{thm}
	Of course, both of the results for $S^{n-1}$ carry over to the general manifold case $N$ without much difficulty.\\
	
	In a future paper, we will investigate the smoothness at the critical times outside of bubbling points. This issue is quite delicate due to the non-local nature of the equation at hand and thus requires more care than in \cite{struwe1} where suitable localisations are immediately available. Furthermore, the question whether bubbling may even occur remains open and under investigation as well, see also \cite{sireweizheng} for a conjecture in this direction. Additionally, the global existence result is still new in the case of arbitrary target manifolds, as previous papers such as \cite{schisirewang} have either only dealt with special target manifolds or with solutions for a possibly short amount of time as in the author's previous work. Finally, the author is aware of current research by Prof. Michael Struwe concerning the half-harmonic gradient flow based on techniques involving harmonic extensions and, once more, arguments similar to \cite{struwe1}. Thus, alternative approaches to the problem at hand may be possible.\\
	
	The structure of the paper is as follows: In Section \ref{prelim}, we will quickly recall the necessary notions used throughout the paper, in particular fractional gradients and divergences, Triebl-Lizorkin spaces on $S^1$ and the fractional Laplacian. In the following Section \ref{halfharmonic}, we provide proofs and statements for properties of the half-harmonic gradient flow. In particular, in Section \ref{existence}, we provide an alternative proof of local existence of solutions. Then in Section \ref{uniqueness}, we give the details of the proof of Lemma \ref{reglemma}. Afterwards, we investigate bubbling in finite time in Section \ref{bubbling}, studying the concentration of energy. Lastly, Section \ref{global} deals with extensions and other ideas to find global solutions to our main PDE.

\section{Preliminaries} \label{prelim}

In this brief preliminary section, we shall introduce some of the most important notions used throughout. In particular, we discuss Triebel-Lizorkin spaces on $S^1$, provide a short summary of fractional gradient and fractional divergences based on \cite{mazoschi} and finally recall some of the main results associated with the fractional heat flow. Most of the results are discussed in more detail in \cite{wettsteinsphere} and the references provided therein.

	\subsection{Triebel-Lizorkin Spaces on the Unit Circle and Fractional Laplacians}

	Firstly, we shall discuss Triebel-Lizorkin spaces on the unit circle $S^1 \subset \R^2$ and recall some of the most important properties of and formulas for the fractional Laplacian. Much of the current presentation is due to \cite{prats1}, \cite{prats2} and \cite{schmeitrieb}. Throughout, we shall use the distance:
	$$| x-y | = 2 | \sin \left(\frac{x-y}{2} \right),$$
	for all $x,y \in S^1 \simeq \R \mod 2 \pi \Z$.\\
	
	We define for any $f: S^1 \to \R$ the following quantity based on the fractional gradients $d_{s} f(x,y) = \frac{f(x) - f(y)}{| x-y |^s}$:
	\begin{equation}
		\mathcal{D}_{s,q}(f)(x) := \left( \int_{S^1} | d_{s} f(x,y) |^q \frac{dy}{| x-y |} \right)^{1/q},
	\end{equation}
	for all $1 \leq q < \infty$ and $0 < s < 1$. We refer to the next subsection for some details on the fractional gradient $d_s f$. Then:
	\begin{equation}
		\| f \|_{\dot{W}^{s,(p,q)}(S^1)} := \| \mathcal{D}_{s,q}(f)(x) \|_{L^{p}(S^1)},
	\end{equation}
	for every $1 \leq p \leq \infty$. If $p = q$, these spaces correspond to the usual homogeneous Gagliardo-Sobolev spaces $\dot{W}^{s,p}(S^1)$, see \cite{prats1}, \cite{prats2}.\\
	
	Furthermore, we shall denote as per usual by $\mathcal{D}'(S^1)$ the set of all distributions on $S^1$ and occasionally use $\mathcal{D}(S^1)$ as an alternative notation for the space $C^{\infty}(S^1)$. Finally, $\hat{f}(k)$ shall always be the $k$-th Fourier coefficient of $f$, for all $f \in \mathcal{D}'(S^1)$. It is formally defined by:
	\begin{equation}
		\hat{f}(k) := \frac{1}{2\pi} \langle f, e^{-ikx} \rangle = \frac{1}{2\pi} f \left( e^{-ikx} \right), \quad \forall k \in \Z
	\end{equation}
	In \cite{schmeitrieb}, it is shown that one may define Triebel-Lizorkin spaces for $S^1$, denoted by ${F}^{s}_{p,q}(S^1)$, completely analogous to the usual space $F^{s}_{p,q}(\R^n)$ for any parameters $s \in \R$ and $p,q \in [1, \infty[$:
	\begin{equation}
		F^{s}_{p,q}(S^1) := \big{\{} f \in \mathcal{D}'(S^1) \ \big{|}\ \| f \|_{F^{s}_{p,q}} < +\infty \big{\}}
	\end{equation}
	The norm is defined by:
	\begin{equation}
		\| f \|_{F^{s}_{p,q}} := \Bigg{\|} \Bigg{\|} \left( \sum_{k \in \mathbb{Z}} 2^{js} \varphi_{j}(k) \hat{f}(k) e^{ikx} \right)_{j \in \mathbb{N}} \Bigg{\|}_{l^{q}} \Bigg{\|}_{L^{p}(S^1)},
	\end{equation}
	for a suitable partition of unity $(\varphi_{j})_{j \in \mathbb{N}}$ consisting of smooth, compactly supported functions on $\R$ with the properties:
	$$\operatorname{supp} \varphi_{0} \subset B_{2}(0), \quad \operatorname{supp} \varphi_{j} \subset \{ x \in \R\ |\ 2^{j-1} \leq | x | \leq 2^{j+1} \}, \forall j \geq 1$$
	as well as:
	$$\forall k \in \mathbb{N}: \sup_{j \in \mathbb{N}} 2^{jk} \| D^{k} \varphi_j \|_{L^{\infty}} \lesssim 1$$
	One may develop, as for example seen in \cite[Chapter 3]{schmeitrieb}, a complete theory of Triebel-Lizorkin spaces on $S^1$ and more generally on the $n$-torus $T^n$ by following the techniques of these spaces on $\R^n$. We list some of the most important properties in \cite{wettsteinsphere} and refer there for some detailed references in \cite{schmeitrieb}, but for now it suffices to be aware that all tools and results for Triebel-Lizorkin spaces $F^{s}_{p,q}(\R^n)$ are also available for $F^{s}_{p,q}(T^n)$.\\
	
	It turns out that the fractional gradients are exceptionally useful in studying non-local problems. As an example, the following result found in \cite{prats2} is key to many of our arguments, allowing us to restrict our considerations to fractional gradients rather than fractional Laplacians:
	
	\begin{thm}[Theorem 1.2, \cite{prats2}]
	\label{schiwangthm1.4}
		Let $s \in (0,1)$, $p,q \in ]1, \infty[$ and $f \in L^p(\R)$. Then:
		\begin{itemize}
			\item[(i)] We know $\dot{W}^{s, (p,q)}(\R^n) \subset \dot{F}^{s}_{p,q}(\R^n)$ together with:
			\begin{equation}
				\| f \|_{\dot{F}^{s}_{p,q}(\R^n)} \lesssim \| f \|_{\dot{W}^{s, (p,q)}(\R^n)}.
			\end{equation}
			
			\item[(ii)] If $p > \frac{nq}{n + sq}$, then we also have the converse inclusion together with:
			\begin{equation}
			\label{secondpartofthm2.1}
				\| f \|_{\dot{W}^{s, (p,q)}(\R^n)} \lesssim \| f \|_{\dot{F}^{s}_{p,q}(\R^n)}.
			\end{equation}
		\end{itemize}
		The constants depend on $s, p, q, n$.
	\end{thm}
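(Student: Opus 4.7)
The plan is to establish each inclusion as a pointwise-in-$x$ domination between the first-difference integrand $\mathcal{D}_{s,q} f(x)$ and the Littlewood--Paley square function defining $\dot{F}^{s}_{p,q}$, and then to take $L^p$-norms: for (i) this will use only Hölder's inequality, whereas (ii) will rely on the Fefferman--Stein vector-valued maximal inequality in $L^p(\ell^q)$, and this last step is precisely where the sharp restriction on $p$ enters.

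For (i), I would fix a Littlewood--Paley resolution $f = \sum_{j \in \Z} \Delta_j f$ with kernels $\phi_j$ of vanishing integral, so that
$$\Delta_j f(x) = \int \phi_j(x-y)\bigl[f(y) - f(x)\bigr]\, dy.$$
Applying Hölder's inequality in $y$ with exponents $(q,q')$ and exploiting the rapid decay of $\phi_j$ away from scale $2^{-j}$, one obtains a pointwise bound of the form $2^{js} |\Delta_j f(x)| \lesssim c_j(x)\, \mathcal{D}_{s,q} f(x)$ with weights $c_j(x)$ uniformly $\ell^q$-summable in $j$. Taking $\ell^q$-norms in $j$ and then the $L^p$-norm in $x$ yields $\|f\|_{\dot{F}^{s}_{p,q}} \lesssim \|f\|_{\dot{W}^{s,(p,q)}}$. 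Since only positivity and Hölder are invoked, no restriction on $p$ beyond $p, q \in (1,\infty)$ is needed here.

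For (ii), the strategy is to split the inner integral dyadically into shells $|x-y| \sim 2^{-k}$ and to expand the difference $f(x) - f(y)$ in Littlewood--Paley pieces. For low frequencies $j \leq k$, a Bernstein/mean-value estimate gives $|\Delta_j f(x) - \Delta_j f(y)| \lesssim 2^{j-k}\, \mathcal{P}_j f(x)$ with $\mathcal{P}_j f$ a Peetre maximal function of $\Delta_j f$, whereas for high frequencies $j > k$ one bounds directly by $\mathcal{P}_j f(x)$. Summing the two resulting geometric series in $k$ and applying Minkowski in $j$ then produces the pointwise domination
$$\mathcal{D}_{s,q} f(x) \lesssim \left( \sum_{j \in \Z} 2^{jsq}\, |\mathcal{P}_j f(x)|^q \right)^{1/q},$$
and the proof is completed by invoking Fefferman--Stein for $(\mathcal{P}_j f)_j$ in $L^p(\ell^q)$, which brings us back to the Triebel--Lizorkin square function.

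The hard part will be securing exactly the threshold $p > nq/(n+sq)$ in (ii). A naive substitution of the Hardy--Littlewood maximal function for $\mathcal{P}_j$ would only require $p > 1$ but would fail to preserve the $\ell^q$ summability; retaining the Peetre maximal function of an order $a > n/p$ chosen so that the vector-valued maximal inequality still applies is the technical crux, and the relation $n/p < n/q + s$ underlying the claimed condition is precisely the balance that this tuning demands, as carried out in detail in \cite{prats2}.
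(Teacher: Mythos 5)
The paper does not prove Theorem \ref{schiwangthm1.4}; it is imported verbatim from Prats--Saksman as \cite[Theorem 1.2]{prats2}, and the author only remarks that a proof of one direction appears in the appendix of \cite{wettsteinsphere}. So there is no internal proof to compare your argument against; I will instead point out two places where your sketch, as written, does not close.

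For (i), the claimed pointwise estimate $2^{js}|\Delta_j f(x)| \lesssim c_j(x)\,\mathcal{D}_{s,q}f(x)$ with $\ell^q$-summable $c_j$ cannot be produced by your Hölder step: that argument gives $c_j \approx 1$ uniformly, and summing $\sum_j 1$ diverges. What actually works is to decompose $\mathcal{D}_{s,q}f(x)^q = \sum_k A_k(x)^q$ into dyadic shells $|x-y|\sim 2^{-k}$ and to show $2^{js}|\Delta_j f(x)| \lesssim \sum_k b_{j-k} A_k(x)$ with $(b_m)\in\ell^1(\Z)$; then Young's inequality $\ell^1 * \ell^q \hookrightarrow \ell^q$, not a pointwise domination, gives the $\ell^q$-bound. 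For (ii) the gap is more serious. Your ``direct bound'' of the high-frequency terms $j>k$ by $\mathcal{P}^a_j f(x)$ forgets the geometric factor $(1+2^j|x-y|)^a \sim 2^{(j-k)a}$ in the definition of the Peetre maximal function, and the required Fefferman--Stein inequality for $(\mathcal{P}^a_j f)_j$ in $L^p(\ell^q)$ needs $a > n/\min(p,q)$, not $a>n/p$. Tracing the shell sum then forces $a<s$, and the two constraints together only give $\min(p,q) > n/s$, which is strictly stronger than the claimed $p > nq/(n+sq)$. Also, ``Minkowski in $j$'' would only yield $\mathcal{D}_{s,q}f(x) \lesssim \sum_j 2^{js}\mathcal{P}^a_j f(x)$, an $\ell^1$- rather than $\ell^q$-sum. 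The sharp threshold in Prats--Saksman comes instead from replacing the sup-type Peetre bound by the $L^q$-averaged Plancherel--Polya/Nikolskii inequality for band-limited pieces, namely $\bigl(\fint_{|y-x|\sim 2^{-k}}|\Delta_j f(y)|^q\,dy\bigr)^{1/q}\lesssim 2^{(j-k)\,n(1/r-1/q)}\bigl(M|\Delta_j f|^r(x)\bigr)^{1/r}$ for $j>k$; the shell sum then converges iff $n(1/r - 1/q)<s$, i.e.\ $r > nq/(n+sq)$, while the vector-valued Hardy--Littlewood/Fefferman--Stein bound on $(M|\Delta_j f|^r)^{1/r}$ in $L^p(\ell^q)$ needs $r < \min(p,q)$. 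The window for $r$ is nonempty exactly when $p > nq/(n+sq)$, which is how the stated threshold actually arises.
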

	
	By using the properties in \cite{schmeitrieb} for periodic functions and employing Theorem \ref{schiwangthm1.4}, we can arrive at the following equivalence with Triebel-Lizorkin spaces for all $1 < q < \infty$ and $1 < p < \infty$:
	\begin{equation}
		\dot{W}^{s, (p,q)}(S^1) = \dot{F}^{s}_{p,q}(S^1),
	\end{equation}
	with equivalence of the corresponding seminorms, provided $p > \frac{q}{1+sq}$. As a simple, but important special case, let us observe that if $s = 1/2$ and $q = 2$, then $p > 1$ is the requirement in Theorem \ref{schiwangthm1.4} for the equality of $\dot{F}^{1/2}_{p,2}$ and $\dot{W}^{1/2, (p,2)}$ to hold. Some more details and a proof of one direction of Theorem \ref{schiwangthm1.4} can be found in the appendix of \cite{wettsteinsphere}.\\
	
	Finally, we would like to briefly address the fractional Laplacian. The simplest definition is based on the Fourier multiplier properties of the Laplacian itself, leading ultimately to the following definition for the fractional $s$-Laplacian on Fourier series on $S^1$:
	\begin{equation}
		\widehat{(-\Delta)^{s} f}(k) = | k |^{2s} \hat{f}(k),
	\end{equation}
	for every $k \in \mathbb{Z}$ and all $0 < s < 1$. There is an alternative formulation as a Cauchy principal value, which actually leads to the same operator and is often useful:
	\begin{equation}
		(-\Delta)^{1/2} f(x) = C \cdot P.V. \int_{S^{1}} \frac{f(x) - f(y)}{| x-y |^{2}} dy,
	\end{equation}
	where $C > 0$ denotes a suitable constant. Similar formulas with less explicit kernels exist for $0 < s < 1$, these are omitted for accessibility of the presentation. Additionally, it is of course possible to define the fractional Laplacians also on $\R^n$, leading again to two different characterisations (as a Fourier multiplier and Cauchy principal value) with the same type of formulas. The details are thus omitted.\\
	
	An essential property of function spaces is their behaviour under Fourier multipliers, for example extending results such as Mikhlin's multiplier theorem for $L^p$-spaces. As the fractional Laplacian is obviously a Fourier multiplier operator, one expects characterisations of the spaces $F^{s}_{p,q}(S^1)$ based on these operators, compare with the Bessel potentials. Indeed, one easily sees (\cite{schmeitrieb}):
	$$(-\Delta)^{s}: \dot{F}^{t+2s}_{p,q} \to \dot{F}^{t}_{p,q},$$
	for all $p,q \in (1, \infty)$ and $t, t+2s \in \R$. This should not be surprising and follows along the same lines as in the case of Triebel-Lizorkin spaces on $\R^n$. Observe the use of $\dot{F}^{s}_{p,q}(S^1)$ rather than $F^{s}_{p,q}(S^1)$ indicating the use of homogeneous Triebel-Lizorkin spaces, which are again defined as usual, see also \cite{schmeitrieb}.

	\subsection{Fractional Gradients and Divergences}
	
	Next, we would like to discuss in some depth the notion of fractional gradient and its derivatives, like the fractional divergence and certain weighted $L^p$-spaces. The presentation greatly draws from \cite{mazoschi} and is a shortened version of \cite{wettsteinsphere}:\\
	
	One may introduce $\mathcal{M}_{od}(\R \times \R)$ as the set of all measurable functions $f: \R \times \R \to \R$ with respect to the weighted Lebesgue measure $\frac{dx dy}{ | x-y |}$. In complete analogy, we do the same for $S^1$ instead of $\R$ as the domain, denoting this space by $\mathcal{M}_{od}$ if both $\R$ or $S^1$ are possible as domains. Naturally, the associated $L^p$-spaces, denoted $L^{p}_{od}$ are of interest and the defining (semi-)norms are given by:
	\begin{equation}
		\| F \|_{L^{p}_{od}} := \left( \int \int | F(x,y) |^{p} \frac{dy dx}{| x-y |} \right)^{1/p},
	\end{equation}
	for $1 \leq p < \infty$. The space $L^{\infty}_{od}(S^1 \times S^1)$ and $L^{\infty}_{od}(\R \times \R)$ as the sets of essentially bounded functions with the essential supremum as the (semi-)norm. Later on, the following quantity, defined in terms of $F, G \in \mathcal{M}_{od}$, will be useful:
	\begin{equation}
		F \cdot G (x) := \int F(x,y) G(x,y) \frac{dy}{| x-y |}
	\end{equation}
	In the special case $F = G$, this becomes:
	\begin{equation}
		F \cdot F(x) = | F |^{2}(x), \quad | F |(x) := \sqrt{F \cdot F(x)}
	\end{equation}
	Of course, this shows:
	$$\| F \|_{L^{2}_{od}}^2 = \int | F |^2(x) dx$$
	
	Let us finally turn to the definition of fractional gradients: For a measurable function $f: \R \to \R$ or $f: S^1 \to \R$, we define for $0 \leq s < 1$ the fractional $s$-gradient by:
	$$d_s f(x,y) = \frac{f(x) - f(y)}{| x-y |^s} \in \mathcal{M}_{od},$$
	and the corresponding $s$-divergence by means of duality, i.e. for $F \in \mathcal{M}_{od}$:
	\begin{equation}
		\langle \div_{s} F, \varphi \rangle = \int \int F(x,y) d_{s} \varphi(x,y) \frac{dy dx}{| x-y |}, \quad \forall \varphi \in C^{\infty}_{c}(\R) \text{ or } C^{\infty}(S^1)
	\end{equation}
	It is obvious that:
	$$d_{s} f(y,x) = - d_{s} f(x,y)$$
	Also, a version of Leibniz' rule holds true:
	$$d_{s} \left( fg \right)(x,y) = d_{s} f(x,y) g(x) + f(y) d_{s} g(x,y)$$
	Naturally, $\div_{s} F$ is only well-defined in a distributional sense.\\
	
	Using the notions introduced for functions $F \in L^{p}_{od}$ and as we have already stated in the subsection before, we do now have:
	\begin{equation}
		\| | d_{s} f | \|_{L^{p}(S^1)} = \| f \|_{\dot{W}^{s,(p,2)}(S^1)},
	\end{equation}
	We refer to Theorem \ref{schiwangthm1.4} for the significance of this. Finally, the fractional Laplacian also has a place in the setting of fractional gradients and divergences, behaving much as expected from $\Delta = \div \circ \nabla$:
	\begin{equation}
	\label{fraclaplbygrad}
		(- \Delta)^{s} f =  C_s\div_{s} d_s f,
	\end{equation}
	for some constant $C_s > 0$ depending on $s$. Equation \eqref{fraclaplbygrad} has to be read as follows:
	$$C_s \int d_s f \cdot d_s g (x) dx = \int (-\Delta)^{s} f \cdot g dx = \int (-\Delta)^{s/2} f \cdot (-\Delta)^{s/2} g dx$$
	A key result to establish, for instance, regularity of fractional harmonic maps or the uniqueness of weak solutions to the half-harmonic gradient flow with small initial energy is the following Wente-type estimate (which was already mentioned in the introduction, but included once more for ease of presentation):
	
	\begin{lem}[Theorem 2.1, \cite{mazoschi}]
	\label{fractionalwentelemma}
		Let $s \in (0,1)$ and $p \in (1,\infty)$. For $F \in L^{p}_{od}(\R \times \R)$ and $g \in \dot{W}^{s,p'}(\R)$, where $p'$ denotes the H\"older dual of $p$, we assume that $\div_s F = 0$. Then $F \cdot d_s g$ lies in the Hardy space $\mathcal{H}^{1}(\R)$\footnote{The Hardy space $\mathcal{H}^{1}(\R)$ is the subspace of $L^{1}(\R)$-functions such that: $$M_{\Phi}(f)(x) := \sup_{t > 0} | \Phi_{t} \ast f |(x) \in L^{1}(\R),$$ where $\Phi$ is a Schwartz function on $\R$ with $\int \Phi dx = 1$ and $\Phi_{t}(x) = 1/t \cdot  \Phi(x/t)$. Various other, sometimes simpler characterisations (for example $\mathcal{H}^{1}(\R) \simeq F^{0}_{1,2}(\R)$) exist and the relevance of Hardy spaces stem from their "good" behaviour with respect to Cald\'eron-Zygmund operators, especially when compared to $L^{1}(\R)$.} and we have the estimate:
		\begin{equation}
			\| F \cdot d_s g \|_{\mathcal{H}^{1}(\R)} \lesssim \| F \|_{L^{p}_{od}(\R \times \R)} \cdot \| g \|_{\dot{W}^{s,p'}(\R)}.
		\end{equation}
	\end{lem}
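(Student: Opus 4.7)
The plan is to exploit the Fefferman--Stein duality $\mathcal{H}^{1}(\R)^{\ast} = \mathrm{BMO}(\R)$ and reduce the claim to the bilinear estimate
\begin{equation*}
\left| \int_{\R} (F \cdot d_{s} g)(x)\, \varphi(x)\, dx \right| \lesssim \|F\|_{L^{p}_{od}}\, \|g\|_{\dot{W}^{s,p'}}\, \|\varphi\|_{\mathrm{BMO}}
\end{equation*}
for every $\varphi \in C^{\infty}_{c}(\R)$. As a first sanity check, the quantity inside the absolute value is a priori in $L^{1}(\R)$ by a plain H\"older inequality in the weighted measure $\tfrac{dy\,dx}{|x-y|}$, so the content of the lemma is to upgrade this bound to an $\mathcal{H}^{1}$ bound, which is exactly what the BMO-pairing captures.

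The key algebraic manoeuvre is the Leibniz rule for the fractional gradient,
\begin{equation*}
d_{s}(g\varphi)(x,y) = \varphi(x)\, d_{s} g(x,y) + g(y)\, d_{s}\varphi(x,y).
\end{equation*}
Multiplying by $F(x,y)$, integrating against $\tfrac{dy\,dx}{|x-y|}$, and using the divergence-free hypothesis $\div_{s} F = 0$ tested against $g\varphi$ (justified by a standard truncation/mollification of $g\varphi$), the term containing $d_{s}(g\varphi)$ drops out and one is left with
\begin{equation*}
\int_{\R} (F \cdot d_{s} g)\, \varphi\, dx = -\int\!\!\int F(x,y)\, g(y)\, d_{s}\varphi(x,y)\, \frac{dy\,dx}{|x-y|}.
\end{equation*}
Exploiting the antisymmetry $d_{s}\varphi(x,y) = -d_{s}\varphi(y,x)$ to symmetrize in $(x,y)$, then using $g(y) - g(x) = -|x-y|^{s}\, d_{s} g(x,y)$ to reintroduce $d_{s} g$, decomposes the right-hand side into two morally bilinear pieces: one involving $F(x,y)\, d_{s} g(x,y)\, d_{s}\varphi(x,y)$ and one involving $g(x)\bigl[F(x,y) - F(y,x)\bigr]\, d_{s}\varphi(x,y)$, both of which are paired against $F$.

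The crux, and what I expect to be the main obstacle, is the estimation of these symmetrized integrals, which cannot be done by a na\"ive H\"older inequality in $L^{p}_{od} \times L^{p'}_{od}$ because $d_{s}\varphi$ does \emph{not} lie in $L^{p'}_{od}$ for a generic $\varphi \in \mathrm{BMO}$. The resolution is to perform a dyadic Littlewood--Paley decomposition in the pair $(x,y)$: on each dyadic annulus $\{2^{-k-1} \leq |x-y| \leq 2^{-k}\}$ one invokes the John--Nirenberg exponential integrability of $\varphi$ to stand in for the missing $L^{q}_{od}$ bound on $d_{s}\varphi$, then sums the resulting geometric-scale estimates to recover $\|\varphi\|_{\mathrm{BMO}}$. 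Alternatively, one can bypass duality entirely and verify directly that the Hardy-space maximal function $M_{\Phi}(F \cdot d_{s} g)$ belongs to $L^{1}(\R)$ by testing against a mollifier $\Phi_{t}$ and using the div-free structure of $F$ to extract a paraproduct/commutator-type cancellation between $F$ and $d_{s} g$. Either route amounts to a non-local incarnation of the classical Coifman--Lions--Meyer--Semmes compensation phenomenon, with the fractional gradient playing the role of the div-curl pair.
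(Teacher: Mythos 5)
This lemma is not proved in the paper at all: it is quoted verbatim as Theorem 2.1 of Mazowiecka--Schikorra \cite{mazoschi} (and already appeared earlier as Proposition \ref{fracwente}), so there is no in-paper argument to compare your sketch against. On its own merits, your reduction begins correctly: testing against $\varphi \in C^{\infty}_{c}(\R)$, the Leibniz rule $d_{s}(g\varphi)(x,y) = \varphi(x)\,d_{s}g(x,y) + g(y)\,d_{s}\varphi(x,y)$ together with $\div_{s}F = 0$ applied to $g\varphi$ yields
\begin{equation*}
\int_{\R}(F\cdot d_{s}g)\,\varphi\,dx \;=\; -\int_{\R}\int_{\R}F(x,y)\,g(y)\,d_{s}\varphi(x,y)\,\frac{dy\,dx}{|x-y|},
\end{equation*}
and this right-hand side is invariant under $g \mapsto g + c$, as it must be since $g$ lives only modulo constants in $\dot{W}^{s,p'}$.

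The genuine gap appears at the symmetrization step. After reintroducing $d_{s}g$ via $g(x)-g(y)=|x-y|^{s}d_{s}g(x,y)$, your decomposition produces a leftover piece of the shape
\begin{equation*}
\int_{\R}\int_{\R} g(x)\,\bigl[F(y,x)-F(x,y)\bigr]\,d_{s}\varphi(x,y)\,\frac{dy\,dx}{|x-y|},
\end{equation*}
in which $g$ appears \emph{undifferenced}. This cannot be bounded by $\|g\|_{\dot W^{s,p'}}$, which only controls differences of $g$; and the antisymmetric kernel $F(y,x)-F(x,y)$ is itself $\div_{s}$-free, so running the $\div_{s}$-free identity once more merely regenerates a term of identical form --- the decomposition never closes into quantities controlled by the stated norms. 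The other piece, $\int\!\int F\,d_{s}g\,d_{s}\varphi\,|x-y|^{s-1}\,dy\,dx$, is the one you propose to handle by dyadic decomposition and John--Nirenberg; that step is in fact the entire analytic content of the theorem (a nonlocal Coifman--Rochberg--Weiss type commutator bound, or equivalently a reduction to $\mathcal{H}^1$-atoms as in \cite{mazoschi}) and would need to be carried out, not merely named. So the sketch identifies the correct duality target and the correct algebraic use of $\div_s F = 0$, but the symmetrization does not produce two estimable bilinear pieces, and the hard estimate remains outstanding.
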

	
	In the case where $s = 1/2$ and $p = p' = 2$, we may immediately deduce $F \cdot d_s g \in H^{-1/2}(\R)$ following the Sobolev embedding $\dot{H}^{1/2}(\R) \hookrightarrow BMO(\R)$ with analogous estimates. Naturally, the result also remains valid in the case of the domain being $S^1$:
	
		\begin{lem}
	\label{fractionalwentelemmaons1insec2}
		For $F \in L^{2}_{od}(S^1 \times S^1)$ and $g \in \dot{H}^{1/2}(S^1)$, we assume that $\div_{1/2} F = 0$. Then $F \cdot d_{1/2} g$ lies in the space $H^{-1/2}(S^1)$ and we have the estimate:
		\begin{equation}
			\| F \cdot d_{1/2} g \|_{{H}^{-1/2}(S^1)} \lesssim \| F \|_{L^{2}_{od}(S^1 \times S^1)} \cdot \| g \|_{\dot{H}^{1/2}(S^1)}.
		\end{equation}
	\end{lem}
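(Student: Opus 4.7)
The plan is to reduce the statement to the Euclidean version, Lemma \ref{fractionalwentelemma}, by a standard localization argument. Choose a smooth partition of unity $\{\chi_1,\chi_2\}$ on $S^1$ such that each $\chi_i$ is supported in an arc $A_i$ strictly shorter than $2\pi$; via a smooth diffeomorphism $\psi_i\colon A_i\to I_i\subset\R$, functions can be transferred from $A_i$ to $\R$ (extended by zero outside), and under this identification the fractional gradient, the weighted measure $\frac{dx\,dy}{|x-y|}$, and the divergence-free condition are preserved up to smooth, bounded factors, since on a bounded arc the intrinsic distance on $S^1$ is comparable to the Euclidean distance in the chart. Writing $g=\chi_1 g+\chi_2 g$ and applying the Leibniz rule
\begin{equation*}
d_{1/2}(\chi_i g)(x,y)=\chi_i(x)\,d_{1/2}g(x,y)+g(y)\,d_{1/2}\chi_i(x,y),
\end{equation*}
we re-express $F\cdot d_{1/2}g$, localized to $A_i$, as a main term plus a smooth correction controlled by $\|F\|_{L^2_{od}}\|g\|_{L^2}$ via straightforward kernel estimates (since $d_{1/2}\chi_i$ is a smooth, bounded function off the diagonal).

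The main terms, after transfer to $\R$ via $\psi_i$, fall within the scope of Lemma \ref{fractionalwentelemma} with $s=1/2$ and $p=p'=2$, so that they lie in $\mathcal{H}^{1}(\R)$ with norm bounded by $\|F\|_{L^{2}_{od}(S^1\times S^1)}\|g\|_{\dot{H}^{1/2}(S^1)}$. Combining with the Sobolev embedding $\dot{H}^{1/2}\hookrightarrow BMO$ on $\R$, which dualizes to $\mathcal{H}^{1}\hookrightarrow \dot{H}^{-1/2}$, we obtain $H^{-1/2}(\R)$-bounds on each localized piece. Pulling back to $S^1$ through the charts $\psi_i$ and summing yields the claimed $H^{-1/2}(S^1)$ estimate, using that $H^{-1/2}$ is preserved up to equivalent norms under composition with smooth, compactly supported diffeomorphisms.

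The main obstacle I anticipate is verifying that the divergence-free property of $F$ transports correctly to the localized, pushed-forward pieces. In general, localizations such as $\chi_i(x)F(x,y)$ destroy the $\div_{1/2}=0$ condition, so one either has to reorganize the algebraic manipulations so that $\div_{1/2}F=0$ is applied globally before any localization, or quantify the resulting error by kernel estimates exploiting the boundedness of $d_{1/2}\chi_i$ off the diagonal and its natural decay. A cleaner alternative is to mimic the proof of Lemma \ref{fractionalwentelemma} directly on $S^1$: the key algebraic identities (antisymmetrization via $d_{1/2}\varphi(y,x)=-d_{1/2}\varphi(x,y)$ and the use of $\div_{1/2}F=0$ against products of the form $\varphi g$) are pointwise and transfer verbatim, while the required harmonic-analytic inputs (atomic decomposition of $\mathcal{H}^{1}(S^1)$ and $\dot{H}^{1/2}(S^1)\hookrightarrow BMO(S^1)$) are available via the periodic Triebel-Lizorkin framework summarized in Section \ref{prelim}.
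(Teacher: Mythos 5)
The paper itself does not supply a proof of Lemma \ref{fractionalwentelemmaons1insec2} --- it only says ``We refer to \cite{wettsteinsphere} for some details of the proof'' --- and the proof there, as your second paragraph correctly anticipates, is a direct transcription of the Euclidean argument from \cite{mazoschi} to the periodic setting: the pointwise algebra (antisymmetry $d_{1/2}\varphi(y,x)=-d_{1/2}\varphi(x,y)$, Leibniz, and testing $\div_{1/2}F=0$ against $\varphi g$) is identical, and the compensated-compactness input ($\mathcal{H}^1(S^1)$, BMO$(S^1)$, Adams/Sobolev embeddings for periodic Triebel--Lizorkin spaces) is available from Section \ref{prelim} via \cite{schmeitrieb}. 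So your ``cleaner alternative'' is the right route and essentially what the cited reference does; it is worth promoting it to the primary argument rather than a fallback.

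Your primary route, the chart-based localization, has a gap beyond the one you flagged. Decomposing $g=\chi_1 g+\chi_2 g$ and applying your Leibniz rule gives
\begin{equation*}
  F\cdot d_{1/2}(\chi_i g)(x) \;=\; \chi_i(x)\,\big(F\cdot d_{1/2}g\big)(x) \;+\; \int_{S^1} F(x,y)\,g(y)\,d_{1/2}\chi_i(x,y)\,\frac{dy}{|x-y|},
\end{equation*}
so the ``main term'' is just $\chi_i(x)$ multiplied against the \emph{full, unlocalized} quantity $F\cdot d_{1/2}g$: the inner $y$-integral still ranges over all of $S^1$ and $F$ is still global. Pushing this forward by a chart $\psi_i\colon A_i\to I_i\subset\R$ does not produce an object of the form $\tilde F\cdot d_{1/2}\tilde g$ for some divergence-free $\tilde F\in L^2_{od}(\R\times\R)$, so Lemma \ref{fractionalwentelemma} cannot be invoked on it. To make a localization argument genuinely work you would also need to truncate $F$ (and the $y$-integral) near the diagonal and control the long-range contribution $|x-y|\gtrsim 1$ as a separate remainder --- which is possible, but it is precisely the part that is missing and nontrivial. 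As it stands, the first route does not close; the second (direct periodic) route does, and aligns with the paper's intended proof.
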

	
	We refer to \cite{wettsteinsphere} for some details of the proof.
	
	\subsection{Fractional Heat Flow}
	
	The presentation of this subsection follows \cite{garofalo} and we refer to it and the references mentioned therein for details.\\
	
	A natural problem to consider the fractional heat flow $\partial_t + (-\Delta)^{s}$. One may be motivated by the ubiquity of the heat equation in general mathematics or by the interest in the fractional harmonic gradient flow, whose linearisation is closely connected to this equation. Of course, semi-group theory provides a suitable theoretical framework to discuss questions of existence, regularity and uniqueness of such solutions. For our purpose, it will be sufficient to introduce the heat kernel (at least in the special case $s = 1/2$) and discuss some of its basic properties.
	
	A natural approach to solve the equation for the fundamental solution of the homogeneous equation:
	$$\partial_t u + (-\Delta)^{s} u = 0, \quad u(0,x) = \delta_{0}(x),$$
	on $[0,\infty[ \times \R$ would be to apply a spatial Fourier transform, leading to the following equation for the Fourier transform:
	$$\partial_{t} \hat{u}(t,\xi) + | \xi |^{2s} \hat{u}(t,\xi) = 0$$
	Solving this ODE for fixed $\xi$ leads us to:
	\begin{equation}
		\hat{u}(t, \xi) = e^{-| \xi |^{2s} t} \cdot \hat{\delta_{0}}(\xi) = e^{-| \xi |^{2s} t}
	\end{equation}
	The fundamental solution is thus the Fourier inverse of this expression and in the case $s = 1/2$, the following explicit formula exists:
	\begin{equation}
		u(t,x) = C \cdot \frac{t}{t^2 + x^2},
	\end{equation}
	$C$ being a suitable constant. A fundamental solution on $S^1$ may be constructed by periodic extension, so we discover an analogous kind of heat kernel. It should be noted that outside of $(t,x) = (0,0)$, the heat kernel is smooth, thus implying the smoothing property already well-known from the standard heat flow.
	
	The fractional heat semigroup may be used for various things, such as a formula for the fractional Laplacians by subordination, see \cite{garofalo}. We are more interested in the immediate regularity properties. By using Duhamel's principle, one may indeed solve the problem:
	\begin{align}
		\partial_{t} v(t,x) + (-\Delta)^{s} v(t,x) 	&= f(t,x), \quad &\forall (t,x) \in ]0,\infty[ \times S^1 \\
		v(0,x) 	&= g(x), \quad &\forall x \in S^1
	\end{align}
	Regularity may be obtained either by semigroup theory or, if $s = 1/2$, using the ellipticity of $\partial_{t} + (-\Delta)^{1/2}$ which is contained in:
	$$\left( \partial_{t} - (-\Delta)^{1/2} \right) \left( \partial_{t} + (-\Delta)^{1/2} \right) = \left( \partial_{t} + (-\Delta)^{1/2} \right) \left( \partial_{t} - (-\Delta)^{1/2} \right) = \partial_{t}^2 + \partial_{x}^2$$
	Therefore, if $s = 1/2$, an $L^p$-theory with estimates as expected does exist, see also \cite{hieber} and the discussion in \cite{wettsteinsphere} on the regularity of local solutions.

\section{Half-Harmonic Gradient Flow} \label{halfharmonic}

In this subsection, we go into some depth regarding some specific aspects of the proof of Theorem \ref{mainresjw}. To be precise, we shall supply the reader with an alternative approach to the local existence result for smooth $u_0$ for some brief interval of time based on Banach's fixed point theorem, present a detailed account of the proof of Lemma \ref{reglemma}, since this argument is beautiful and provides potential insight into the way Hodge decomposition may be substituted in the non-local case. The way to conclude from this uniqueness for weak solutions follows by using similar arguments as in the introduction and we refer to \cite{wettsteinsphere} for the details. Following this, we shall then discuss bubbling processes based on concentration estimates in localised Gagliardo seminorms and rescaled versions of the solution. The approach is quite similar to \cite{struwe1}, however the non-locality renders quite a few steps more difficult and requires us to refine an estimate we have previously proven in \cite{wettsteinsphere} and \cite{wettstein}. Only after having this estimate available are we in a position to address boundedness of suitable rescalings of the solution to the half-harmonic gradient flow. To conclude this section, we discuss global existence of solutions by using two distinct approaches, one producing a solution based on Theorem \ref{mainresjw} with non-increasing energy, while the other proves existence based on variational arguments, but does not immediately exhibit monotonicity of energy.

\subsection{A Local Existence and Regularity Result} \label{existence}

In this first subsection, our goal is to prove the following:

\begin{prop}
\label{localexistence}
	Let $u_{0} \in C^{\infty}(S^1;S^{n-1})$. Then there exists a solution $u: [0,T] \times S^{1} \to S^{n-1}$ with $u(0,\cdot) = u_0$ of the equation \eqref{gradflowresjw} which is smooth on some time interval $[0,T]$, where $T = T(u_0)$.
\end{prop}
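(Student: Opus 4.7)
The plan is to reformulate the problem as an integral equation via Duhamel's principle with the half-heat semigroup $(e^{-t(-\Delta)^{1/2}})_{t \geq 0}$ from Section \ref{prelim}, and to apply Banach's fixed point theorem. Setting
$$\Phi(v)(t) := e^{-t(-\Delta)^{1/2}} u_0 + \int_0^t e^{-(t-s)(-\Delta)^{1/2}} \bigl[ v(s)\, |d_{1/2} v(s)|^2 \bigr] \, ds,$$
I would look for a fixed point of $\Phi$ in a Banach space $X_T$ of $\R^n$-valued functions on $[0,T] \times S^1$ of sufficiently high regularity; since $u_0 \in C^\infty$, there is complete freedom in this choice. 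A convenient one is $X_T = C([0,T]; H^k(S^1; \R^n))$ for some integer $k$ large enough that $H^k$ is a multiplicative algebra and that the non-local quantity $|d_{1/2} v|^2$ is controlled in a suitable fractional Sobolev norm by powers of $\|v\|_{H^k}$. The latter estimate follows from the pointwise identity $|d_{1/2} v|^2 = v \cdot (-\Delta)^{1/2} v - \tfrac{1}{2} (-\Delta)^{1/2}(|v|^2)$ together with the mapping properties of $(-\Delta)^{1/2}$ on Triebel-Lizorkin spaces and the identification of Theorem \ref{schiwangthm1.4}.

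Using the $L^p$-theory for the elliptic operator $\partial_t + (-\Delta)^{1/2}$ recalled at the end of Section \ref{prelim}, together with the appropriate smoothing and product estimates, I would show that for $R := 2\|u_0\|_{H^k}$ and $T = T(u_0) > 0$ small enough, $\Phi$ sends the closed ball of radius $R$ in $X_T$ into itself and is a strict contraction. Banach's fixed point theorem then produces a unique $u \in X_T$ solving \eqref{gradflowresjw} in the distributional sense.

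The main obstacle, due to the non-local character of the nonlinearity, is verifying that $u$ actually takes values in $S^{n-1}$ rather than merely in $\R^n$. To this end, I would set $\rho := |u|^2 - 1$ and use the pointwise identity
$$2\, u(x) \cdot (-\Delta)^{1/2} u(x) = (-\Delta)^{1/2}(|u|^2)(x) + 2\, |d_{1/2} u|^2(x),$$
which follows by expanding $2 u(x) \cdot \bigl(u(x) - u(y)\bigr) = \bigl(|u(x)|^2 - |u(y)|^2\bigr) + |u(x) - u(y)|^2$ under the principal-value integral defining $(-\Delta)^{1/2}$. Taking the $\R^n$-inner product of \eqref{gradflowresjw} with $u$ and substituting yields the closed linear equation
$$\partial_t \rho + (-\Delta)^{1/2} \rho = 2\, \rho\, |d_{1/2} u|^2, \qquad \rho(0, \cdot) \equiv 0.$$
Since the fixed point $u$ satisfies $|d_{1/2} u|^2 \in L^\infty([0,T] \times S^1)$ by the choice of $k$, a standard Gronwall argument on $\|\rho(t)\|_{L^2}^2$ forces $\rho \equiv 0$, so $u(t, \cdot) \in S^{n-1}$ for every $t \in [0,T]$.

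Finally, smoothness of $u$ on $[0,T] \times S^1$ follows by a bootstrap combining the elliptic $L^p$-regularity of $\partial_t + (-\Delta)^{1/2}$ with product estimates in the $\dot{F}^s_{p,q}$-framework: the spatial regularity of $u$ is lifted by a fractional derivative at each iteration, and differentiating \eqref{gradflowresjw} transfers the resulting spatial smoothness to the time variable, yielding $u \in C^\infty([0,T] \times S^1; S^{n-1})$ as claimed.
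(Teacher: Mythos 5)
Your high-level strategy agrees with the paper's: both build the local solution by a contraction mapping for the linear problem $\partial_t w + (-\Delta)^{1/2} w = v\,|d_{1/2}v|^2$, and both then verify the sphere constraint a posteriori and bootstrap. However your chosen function space together with the Duhamel representation has a genuine closure problem. Your own identity $|d_{1/2}v|^2 = 2v\cdot(-\Delta)^{1/2}v - (-\Delta)^{1/2}(|v|^2)$ (up to normalisation) shows that $|d_{1/2}v|^2$ is a first-order quantity: for $v\in H^k$ one only gets $v\,|d_{1/2}v|^2\in H^{k-1}$. The half-heat semigroup recovers this derivative only at the cost of a non-integrable factor, $\|e^{-\tau(-\Delta)^{1/2}}\|_{H^{k-1}\to H^k}\sim \tau^{-1}$, so the Duhamel term $\int_0^t e^{-(t-s)(-\Delta)^{1/2}}[v|d_{1/2}v|^2](s)\,ds$ does not map $C([0,T];H^k)$ into itself; the self-map does not close. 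This is precisely why the paper sets up the fixed point in the space-time Sobolev space $W^{1,p}([0,T]\times S^1)$ with $p>4$ and invokes the maximal $L^p$-regularity of the elliptic operator $\partial_t+(-\Delta)^{1/2}$ (which gains one full space-time derivative on $L^p$) rather than the semigroup representation; the nonlinearity is then controlled in $L^p$ via the embedding $W^{1,p}\hookrightarrow C^{\alpha}$ with $\alpha=1-2/p>1/2$, which makes the integral defining $|d_{1/2}u|^2$ convergent, and the $T^{1/p}$-smallness comes from trading $L^\infty_t$ for $L^p_t$. You could rescue your scheme either by switching exactly to the paper's $W^{1,p}$ framework, or by abandoning the pure Duhamel formulation in favour of the parabolic energy estimate in $L^\infty_t H^k\cap L^2_t H^{k+1/2}$, which \emph{does} close because $v|d_{1/2}v|^2\in L^2_t H^{k-1/2}$ pairs against the gain of half a derivative on each side.

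Your treatment of the constraint $u(t,\cdot)\in S^{n-1}$ is a genuine and correct alternative to the paper's, which relies on a parabolic maximum principle argument referenced from \cite{wettsteinsphere}. Your derivation of the closed scalar equation $\partial_t \rho + (-\Delta)^{1/2}\rho = 2\rho\,|d_{1/2}u|^2$ for $\rho=|u|^2-1$, together with positivity of $\langle(-\Delta)^{1/2}\rho,\rho\rangle$ and Gr\"onwall, is clean and self-contained; it replaces the non-local maximum principle with an $L^2$ energy argument and is arguably more robust. Once the function-space issue is repaired, the remainder of your plan (smoothness by bootstrapping spatial regularity through the elliptic operator $\partial_t+(-\Delta)^{1/2}$ and then transferring to time derivatives via the equation) matches the paper's strategy and is sound.
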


This result was already proven in \cite{wettsteinsphere} by introducing an appropriate solution operator $H$ and applying the inverse function theorem. The key observations were that firstly, the linearisation of $H$ at any smooth function is indeed Fredholm with index 0 and thus injectivity and invertibility become equivalent. Secondly, an argument based on maximum principles shows that smooth elements in the kernel of the linearisation are always trivial. Bootstrapping to deduce sufficient regularity then bridges the gap between the two observations and amounts to the existence result stated as Proposition \ref{localexistence}.\\

Here, we will take a slightly different approach and substitute the use of Fredholm theory by employing a standard argument based on Banach's fixed point theorem. For the remainder of this section, we shall denote by:
$$W^{1,p}_{u_0}([0,T] \times S^{1}) := \big{\{} u \in W^{1,p}([0,T] \times S^{1}) \in \big{|} u(0, \cdot) = u_0 \big{\}},$$
where $u_0 \in C^{\infty}(S^1;S^{n-1})$ is a given boundary value.
Indeed, we shall prove:

\begin{lem}
\label{fixedpoint}
	Let $u_0 \in C^{\infty}(S^{1};S^{n-1})$ and $p > 4$. Then the map:
	\begin{equation}
	\label{contop}
		S: W^{1,p}_{u_0}([0,T] \times S^{1}) \to W^{1,p}_{u_0}([0,T] \times S^{1}),
	\end{equation}
	mapping $u \in W^{1,p}_{u_0}([0,T] \times S^{1})$ to the unique solution $T(u) \in W^{1,p}_{u_0}([0,T] \times S^{1})$ of the following system:
	\begin{align}
		\partial_{t} S(u) + (-\Delta)^{1/2} S(u) 	&= u | d_{1/2} u |^2, 	&\quad \forall (t,x) \in [0,T] \times S^{1} \\
		S(u)(0, x) 	&= u_0(x),		&\quad \forall x \in S^1
	\end{align}
	Given $R > 0$ sufficiently big and $T > 0$ sufficients small, then $S$ is a contraction of the closed ball of radius $R$ around $u_0$, denoted $B_{R}(u_0)$, onto itself and thus possesses a fixed point.
\end{lem}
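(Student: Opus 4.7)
The plan is to verify the three ingredients of Banach's fixed-point theorem: well-posedness of the linearised problem defining $S$, the self-mapping property $S(\overline{B_R(u_0)}) \subset \overline{B_R(u_0)}$, and strict contractivity. All three rest on a maximal-regularity estimate for the linear fractional heat operator $L := \partial_t + (-\Delta)^{1/2}$, which is available thanks to the ellipticity factorisation recalled in Section~\ref{prelim},
$$L \circ (\partial_t - (-\Delta)^{1/2}) = \partial_t^2 + \partial_x^2,$$
and the parabolic $L^p$-theory referenced via \cite{hieber}. For a source $f$ in the appropriate parabolic function space $Y_T$ and for the smooth trace data $u_0$, the Cauchy problem $Lv = f$, $v(0,\cdot) = u_0$ admits a unique solution $v \in W^{1,p}_{u_0}$ with $\|v\|_{W^{1,p}([0,T]\times S^1)} \lesssim \|f\|_{Y_T} + C(u_0)$.

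For the self-mapping step I would decompose $S(u) = v_0 + w(u)$, where $v_0$ solves $Lv_0 = 0$ with $v_0(0,\cdot) = u_0$ (hence smooth for $t>0$ by the smoothing of the fractional heat kernel and arbitrarily close to $u_0$ in $W^{1,p}$ as $T\to 0$), and $w(u)$ solves $Lw = u|d_{1/2}u|^2$ with vanishing initial data. The Sobolev embedding $W^{1,p}([0,T]\times S^1) \hookrightarrow C^{0,1-2/p}$ valid for $p>4$ gives a uniform $L^\infty$-bound $\|u\|_\infty \lesssim \|u\|_{W^{1,p}}$, while the identification $\|\,|d_{1/2}u(t,\cdot)|\,\|_{L^p(S^1)} = \|u(t,\cdot)\|_{\dot{W}^{1/2,(p,2)}(S^1)}$ together with Theorem~\ref{schiwangthm1.4} controls the fractional gradient by the spatial $W^{1,p}$-norm of $u(t,\cdot)$. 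Combining these via H\"older in time and exploiting a short-time gain $T^\beta$ of the maximal-regularity operator yields a bound of the form
$$\|w(u)\|_{W^{1,p}} \lesssim T^\beta \|u\|_{W^{1,p}}^3,$$
which together with $\|v_0 - u_0\|_{W^{1,p}} \to 0$ as $T\to 0$ keeps $S(u)$ inside $\overline{B_R(u_0)}$ once $R$ is fixed (large enough relative to $u_0$) and $T$ is then taken small enough.

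For the contractivity step I would write, for $u, v \in \overline{B_R(u_0)}$,
\begin{align}
u |d_{1/2} u|^2 - v |d_{1/2} v|^2 = (u-v)|d_{1/2} u|^2 + v \Big( d_{1/2}(u-v) \cdot d_{1/2}(u+v)\Big),
\end{align}
using the symmetric bilinear pairing $F\cdot G$ from Section~\ref{prelim}, and apply the same Sobolev, H\"older and $L^p_{od}$ estimates to each term. This produces
$$\|S(u)-S(v)\|_{W^{1,p}} \lesssim T^\beta \left(\|u\|_{W^{1,p}}^2 + \|v\|_{W^{1,p}}^2\right) \|u-v\|_{W^{1,p}} \leq T^\beta C(R)\, \|u-v\|_{W^{1,p}},$$
so for $T$ small enough (depending on the previously fixed $R$) the map $S$ becomes a strict contraction, and Banach's theorem supplies the required fixed point.

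The principal obstacle will be the precise choice of the source space $Y_T$ and the verification that $u|d_{1/2}u|^2$ lies in it with a trilinear control of the right form. Because the nonlinearity couples a pointwise factor $u$ with the non-local quadratic expression $|d_{1/2}u|^2$, one must work with anisotropic parabolic function spaces (Triebel--Lizorkin-type of order $1/2$ in space, $1$ in time), identify $|d_{1/2}u|^2$ with a Gagliardo-type seminorm via the equivalence $\dot W^{1/2,(p,2)} = \dot F^{1/2}_{p,2}$ from Theorem~\ref{schiwangthm1.4} (whose hypothesis reduces to $p>1$ in our regime), and then invoke the maximal regularity of $L$ to land back in $W^{1,p}$. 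Once this function-space bookkeeping has been settled, the remainder is the standard power-of-$T$ absorption familiar from semilinear parabolic fixed-point proofs.
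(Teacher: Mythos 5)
Your proposal is correct and follows essentially the same route as the paper: Banach's fixed-point theorem applied to the map $S$ in $W^{1,p}_{u_0}([0,T]\times S^1)$, with the restriction $p>4$ invoked precisely to get the Sobolev embedding into $C^{0,\alpha}$ with $\alpha = 1-2/p > 1/2$, and a power of $T$ to close the contraction. The contraction estimate you sketch, via the polarization identity $|d_{1/2}u|^2 - |d_{1/2}v|^2 = d_{1/2}(u-v)\cdot d_{1/2}(u+v)$, matches the term-by-term bound in the paper (which yields the Lipschitz constant $\lesssim (R+\|u_0\|_{W^{1,p}})^2\,T^{1/p}$).

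Two places where you make more work than necessary. First, the ``principal obstacle'' you flag about choosing the source space $Y_T$ does not arise: the whole reason for $p>4$ is that $\alpha>1/2$ makes $|d_{1/2}u|^2(x) \le [u]_{C^{0,\alpha}}^2 \int_{S^1}|x-y|^{2\alpha-2}\,dy$ finite and uniform in $x$, so $u|d_{1/2}u|^2 \in L^\infty([0,T]\times S^1)\subset L^p$, and one can simply take $Y_T = L^p$; no anisotropic Triebel--Lizorkin scale is needed, and the $T^{1/p}$ gain is just H\"older in time on a bounded source, not a property of the maximal-regularity operator. Second, for the self-mapping step you split $S(u)=v_0+w(u)$ and estimate $w(u)$ cubically; the paper instead sets $R = 2\,\|u_0 - S(u_0)\|_{W^{1,p}}$, shrinks $T$ so that the Lipschitz constant is $\le 1/2$, and gets invariance of $\overline{B_R(u_0)}$ from a single triangle-inequality step $\|u_0 - S(u)\| \le \|u_0 - S(u_0)\| + \|S(u_0) - S(u)\|$. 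This avoids any separate estimate of $S(u)$ itself. Both versions are valid; yours is the textbook semilinear-parabolic decomposition, while the paper's is the shorter Lipschitz-twice trick.
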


We remark at this point that by then employing the same kind of bootstrap procedure as in \cite{wettstein}, we immediately deduce that the fixed point is smooth, thus Proposition \ref{localexistence} holds, once we have established Lemma \ref{fixedpoint}. The reader should notice that we tacitly omit any assumption ensuring $u(t,x) \in S^{n-1}$ for $(t,x) \in [0,T] \times S^{1}$. This is no oversight, but relates to the fact that by employing the maximum principle for parabolic equations immediately proves this from the equation \eqref{gradflowresjw}, see \cite{wettsteinsphere}.

\begin{proof}
	First, one should observe that $u | d_{1/2} u |^2 \in L^{\infty}([0,T] \times S^1) \subset L^{p}([0,T] \times S^1)$. This follows by Sobolev embeddings into H\"older spaces and the compactness of the domain. Therefore, the operator $S$ is actually well-defined.\\
	
	By abuse of notation, we denote by $u_0$ also its extension to $[0,T] \times S^1$ which is independent of time. Let us consider the following for arbitrary $u,v \in W^{1,p}_{u_0}([0,T] \times S^{1})$:
	\begin{align}
	\label{lipschitz}
		\| S(u) - S(v) \|_{W^{1,p}}		&\lesssim \| u | d_{1/2} u |^2 - v | d_{1/2} v |^2 \|_{L^{p}} \notag \\
								&\lesssim \| u - v \|_{L^{p}} \| u \|_{C^{1/2}}^2 + \| v \|_{L^{\infty}} \| | d_{1/2} (u-v) | \|_{L^{p}} \left( \| u \|_{C^{1/2}} + \| v \|_{C^{1/2}} \right) \notag \\
								&\lesssim T^{1/p} \| u - v \|_{L^{\infty}} \| u \|_{W^{1,p}} + \| v \|_{W^{1,p}} \cdot T^{1/p} \| u - v \|_{C^{1/2}}  \cdot \left( \| u \|_{W^{1,p}} + \| v \|_{W^{1,p}} \right) \notag \\
								&\lesssim \left( R + \| u_0 \|_{W^{1,p}} \right)^2 \cdot T^{1/p} \cdot \| u - v \|_{W^{1,p}},
	\end{align}
	where we emphasise that all estimates have no further dependence on $T$. This may be seen by mirror-extensions and applying the Sobolev embeddings on potentially larger sets. Thus, we may conclude, provided $R$ is given:
	$$S \text{ is a contraction, if } T \text{ is sufficently small.}$$
	Thus, it remains to be seen that provided $R > 0$ is sufficiently large, then for every $u \in B_{R}(u_0)$, we also have:
	$$S(u) \in B_{R}(u_0)$$
	To see this, we have to consider the difference:
	$$d := \| u_0 - S(u_0) \|_{W^{1,p}}$$
	We define for now $R = 2d$ and then choose $T>0$ so small, that the Lipschitz constant in \eqref{lipschitz} is $1/2$. Let us notice that for any $u \in B_{R}(0)$, we have:
	\begin{align}
		\| u_0 - S(u) \|_{W^{1,p}}	&\leq \| u_0 - S(u_0) \|_{W^{1,p}} + \| S(u_0) - S(u) \|_{W^{1,p}} \notag \\
							&\leq \frac{R}{2} + \frac{1}{2} \| u_0 - u \|_{W^{1,p}} \notag \\
							&\leq \frac{R}{2} + \frac{R}{2} = R,
	\end{align}
	thus:
	$$S(u) \in B_{R}(u_0).$$
	This now concludes the proof of Lemma \ref{fixedpoint}, as $W^{1,p}_{u_0}([0,T] \times S^1)$ is a complete metric space due to the continuity of the trace operator.
\end{proof}

\subsection{Uniqueness: Rivi\`ere's Lemma \ref{reglemma}} \label{uniqueness}

In this short section, we shall explain the proof of Lemma \ref{reglemma}. Recall that we are interested in solutions $u \in H^{1/2}(S^1;S^{n-1})$ of an equation of the form:
\begin{equation}
\label{riviereformrepeat}
	(-\Delta)^{1/2} u = u | d_{1/2} u |^2 + f,
\end{equation}
where $f \in L^{2}(S^1;\R^n)$. We observe the following (using Einstein's summation convention):
\begin{align}
	&\langle \div_{1/2} \left( u_i d_{1/2} u_j - u_j d_{1/2} u_i \right), \varphi \rangle_{\mathcal{D}'(S^1)} \notag \\
	&= \langle u_i d_{1/2} u_j - u_j d_{1/2} u_i, d_{1/2} \varphi \rangle_{L^{2}_{od}(S^1 \times S^1)} \notag \\
	&= \int_{S^1} \int_{S^1} u_i(x) \frac{(u_j(x) - u_j(y))(\varphi(x) - \varphi(y)}{| x-y |^2} - u_j(x) \frac{(u_i(x) - u_i(y))(\varphi(x) - \varphi(y)}{| x-y |^2} dy dx \notag \\
	&= \int_{S^1} \int_{S^1} d_{1/2} u_j(x,y) \left( d_{1/2} (u_i \varphi)(x,y) - d_{1/2} u_i(x,y) \varphi(y) \right) \frac{dy dx}{| x-y |} \notag \\
	&- \int_{S^1} \int_{S^1} d_{1/2} u_i(x,y) \left( d_{1/2} (u_j \varphi)(x,y) - d_{1/2} u_j(x,y) \varphi(y) \right) \frac{dy dx}{| x-y |} \notag \\
	&= \int_{S^{1}} \int_{S^1} d_{1/2} u_i(x,y) d_{1/2} (u_j \varphi)(x,y) - d_{1/2} u_j(x,y) d_{1/2} (u_i \varphi)(x,y) \frac{dy dx}{| x-y |} \notag \\
	&= \int_{S^1} u_i(x) | d_{1/2} u |^2(x) \cdot u_j(x) \varphi(x) - u_j(x) | d_{1/2} u |^2(x) \cdot u_{i}(x) \varphi(x) - f_i(x) u_j(x) \varphi(x) + f_{j}(x) u_{i}(x) \varphi(x) dx \notag \\
	&= \int_{S^1} \left( u_{i}(x) f_{j}(x) - u_{j}(x) f_{i}(x) \right) \varphi(x) dx,
\end{align}
which reveals, in analogy to \cite{riv}:
\begin{equation}
\label{almostconserv}
	\forall i,j \in \{ 1, \ldots, n \}: \div_{1/2} \left( u_i d_{1/2} u_j - u_j d_{1/2} u_i \right) = u_i f_j - u_j f_i
\end{equation}
One may solve now for $i, j$ as above the equation:
$$(-\Delta)^{1/2} \psi_{ij} = u_i f_j - u_j f_i,$$
for $\psi_{ij} \in H^{1}(S^1)$. Observe that we may choose these in such a way that $\psi_{ij} = - \psi_{ji}$. Then it becomes clear:
$$\div_{1/2} \left( u_i d_{1/2} u_j - u_j d_{1/2} u_i - d_{1/2} \psi_{ij} \right) = 0$$
Thus, defining $\Omega_{ij} := u_i d_{1/2} u_j - u_j d_{1/2} u_i - d_{1/2} \psi_{ij}$, we find:
\begin{equation}
\label{rewriting1}
	(-\Delta)^{1/2} u = \Omega \cdot d_{1/2} u + T(u) + d_{1/2} \psi \cdot d_{1/2} u + f
\end{equation}
Here, $T(u)$ is the remainder as already found in \cite{mazoschi} and \cite{wettsteinsphere}: It is given by $T(u) = (T^{1}(u), \ldots, T^{n}(u))$ and
$$\forall i \in \{ 1, \ldots, n \}: T^{i}(u) := \frac{1}{2} \sum_{k=1}^{n} \int_{S^{1}} d_{1/2} u_{i}(x,y) | d_{1/4} u_{k} (x,y) |^2 \frac{dy}{| x-y |}$$
One may generalise this remainder as follows:
$$T^{i}(u,v,w) := \frac{1}{2} \sum_{k=1}^{n} \int_{S^{1}} d_{1/2} u_{i}(x,y)  d_{1/4} v_{k} (x,y) d_{1/4} w_{k}(x,y) \frac{dy}{| x-y |},$$
such that $T(u) = T(u,u,u)$. This term has good integrability properties, see the previous section. To simplify, let us notice that $\psi \in H^{1}(S^1) \hookrightarrow W^{1/2, p}(S^1)$ by Sobolev embeddings for every $p < + \infty$ and thus, using \cite{prats1}:
$$d_{1/2} \psi \cdot d_{1/2} u \in L^{q}(S^1), \quad \forall 1 \leq q < 2,$$
since $| d_{1/2} u | \in L^{2}(S^1)$ by $u \in H^{1/2}(S^1)$. Thus, \eqref{rewriting1} can be rephrased as:
\begin{equation}
\label{rewriting2}
	(-\Delta)^{1/2} u = \Omega \cdot d_{1/2} u + T(u,u,u) + \tilde{f},
\end{equation}
where $\tilde{f} := f + d_{1/2} \psi \cdot d_{1/2} u \in L^{q}(S^1)$ for all $1 \leq q < 2$.\\

The key idea in \cite{riv} is now the following: We try to approximate $\Omega$ by a smooth $\tilde{\Omega}$ with vanishing $1/2$-divergence, such that:
$$\| \Omega - \tilde{\Omega} \|_{L^{2}_{od}} \leq \varepsilon,$$
for $\varepsilon > 0$ small. Similarily, we approximate $u$ by a smooth $\tilde{u}$ in $H^{1/2}(S^1)$. Then \eqref{rewriting2} leads us to:
\begin{equation}
\label{rewriting3}
	(-\Delta)^{1/2} u - \left( \Omega - \tilde{\Omega} \right) \cdot d_{1/2} - T(u,u - \tilde{u},u - \tilde{u}) = \tilde{\Omega} \cdot d_{1/2} u + T(u,u, \tilde{u}) + T(u, \tilde{u}, u- \tilde{u}) + \tilde{f} =: \hat{f}
\end{equation}
Since $\div_{1/2} (\Omega - \tilde{\Omega}) = 0$, we notice that we are in the realm of the fractional Wente-type estimate in Proposition \ref{fracwente}. Namely, if $v \in \dot{F}^{1/2}_{p,2}(S^1)$ for some $p > 2$, then we have by H\"older's inequality:
$$\Big{\|} \left( \Omega - \tilde{\Omega} \right) \cdot d_{1/2} v \Big{\|}_{L^{\frac{2p}{p+2}}} \lesssim \| \Omega - \tilde{\Omega} \|_{L^{2}_{od}} \| v \|_{\dot{F}^{1/2}_{p,2}} \leq \varepsilon \cdot \| v \|_{\dot{F}^{1/2}_{p,2}}$$
Since $F^{1/2}_{p',2}(S^1) \hookrightarrow L^{\frac{2p'}{2-p'}}$ for $p'$ the H\"older conjugate of $p$ by Sobolev embedding, the inequality above immediately yields:
$$\Big{\|} \left( \Omega - \tilde{\Omega} \right) \cdot d_{1/2} v \Big{\|}_{F^{-1/2}_{p,2}} \lesssim \varepsilon \cdot \| v \|_{\dot{F}^{1/2}_{p,2}}$$
In the case $p = 2$, i.e. $v \in \dot{F}^{1/2}_{2,2}(S^1) = \dot{H}^{1/2}(S^1)$, then by Proposition \ref{fracwente} we get immediately:
$$\Big{\|} \left( \Omega - \tilde{\Omega} \right) \cdot d_{1/2} u \Big{\|} \lesssim \| \Omega - \tilde{\Omega} \|_{L^{2}_{od}} \| v \|_{\dot{H}^{1/2}} \leq \varepsilon \cdot \| v \|_{\dot{H}^{1/2}}$$
In an analogous manner, the estimates in the preliminary section show us:
$$\| T(v, u - \tilde{u}, u - \tilde{u}) \|_{\dot{F}^{-1/2}_{p,2}} \lesssim \| u - \tilde{u} \|_{H^{1/2}}^2 \| v \|_{\dot{F}^{1/2}_{p,2}} \leq \varepsilon^2 \cdot \| v \|_{\dot{F}^{1/2}_{p,2}},$$
for all $p \geq 2$. This shows us that the operator:
$$\tau: v \mapsto v - (-\Delta)^{-1/2} \left( \left( \Omega - \tilde{\Omega} \right) \cdot d_{1/2} v + T(v, u - \tilde{u}, u - \tilde{u}) \right),$$
actually defines an invertible operator (one has to be slightly careful at this point and restrict to $v$ having vanishing mean), for any $p \geq 2$, from $\dot{F}^{1/2}_{p,2}(S^1)$ to itself, provided $\varepsilon > 0$ is sufficiently small. \\

Keeping the RHS of \eqref{rewriting3} in mind, it is immediate that it lies in $L^{q}$ for all $1 \leq q < 2$ by estimates from the previous subsection. Thus:
$$(-\Delta)^{-1/2} \left(  \tilde{\Omega} \cdot d_{1/2} u + T(u,u, \tilde{u}) + T(u, \tilde{u}, u- \tilde{u}) + \tilde{f} \right) \in \dot{F}^{1}_{q,2}(S^1) \hookrightarrow \dot{F}^{1/2}_{\frac{2q}{2-q},2}(S^1),$$
for again all $q \in [1,2[$.\\

The conclusion of Lemma \ref{reglemma} follows now by noticing that $\tau(v) = (-\Delta)^{-1/2} \hat{f}$ does possess a solution $v \in \dot{F}^{1/2}_{p,2}(S^1)$ by invertibility for each fixed $p \geq 2$, provided $\varepsilon > 0$ is sufficiently small. Observing that due to compactness of $S^1$, we have:
$$\dot{F}^{1/2}_{p,2}(S^1) \subset \dot{F}^{1/2}_{2,2}(S^1) = H^{1/2}(S^1),$$
by choosing $\varepsilon > 0$ so small, that invertibility holds for $p = 2$ and some $p > 2$, we conclude that the solution $v \in \dot{F}^{1/2}_{p,2}(S^1)$ must also lie in $H^{1/2}(S^1)$. Since $u \in H^{1/2}(S^1)$ is already a solution and by invertibility actually the unique one, we deduce:
$$v = u \Rightarrow u \in \dot{F}^{1/2}_{p,2}(S^1)$$
As $p > 2$ was arbitrary up to possibly choosing better approximations for even smaller $\varepsilon > 0$, we find:
$$u \in \dot{F}^{1/2}_{p,2}(S^1), \quad \forall p \in [1,+\infty[$$
Taking $p = 4$, we deduce:
\begin{equation}
\label{l4int}
	| d_{1/2} u |^2 \in L^{2}(S^1),
\end{equation}
which combined with $| u | = 1$ almost everywhere and \eqref{riviereformrepeat}, we thus conclude:
$$(-\Delta)^{1/2} u = u | d_{1/2} u |^2 + f \in L^2(S^1) \Rightarrow u \in H^{1/2}(S^1;S^{n-1}),$$
which is the required conclusion.\\

All that remains to do is to justify the approximation of $\Omega$ and $u$. Since the latter is standard and does not require any further interesting considerations, it is omitted here. The former, however, requires some care. Thus, let $\varepsilon > 0$ be arbitrary and we shall consider the following approximation:
$$\Omega_{\delta} := \Omega \cdot 1_{D_{\delta}},$$
where:
$$\forall \delta > 0: D_{\delta} := \{ (x,y) \in S^1 \times S^1 | | x-y | \geq \delta \}$$
So $D_\delta$ omits a neighbourhood of the diagonal. It is clear by Lebesgue's dominated convergence, that:
\begin{equation}
	\Omega_{\delta} \to \Omega, \text{ as } \delta \to 0,
\end{equation}
in the space $L^{2}_{od}(S^1 \times S^1)$. Thus, take $\delta$ so small that:
\begin{equation}
	\| \Omega - \Omega_{\delta} \|_{L^{2}_{od}} < \frac{\varepsilon}{2}
\end{equation}
Now, we may argue by convolution by a suitable smooth kernel to replace $\Omega_{\delta}$ by a smooth function, again denoted $\Omega_{\delta}$, which vanishes close to the diagonal $\{ x = y \}$. This is again standard and thus omitted.\\

The final obstacle to overcome is to adapt $\Omega_{\delta} \in C^{\infty}(S^1 \times S^1)$ in such a way that:
$$\div_{1/2} \Omega_{\delta} = 0$$
To achieve this, we shall solve the following problem:
$$(-\Delta)^{1/2} h_{\delta} = \div_{1/2} \Omega_{\delta},$$
i.e. solving the weak equation:
$$\langle (-\Delta)^{1/2} h_{\delta}, \varphi \rangle = \int_{S^1} \int_{S^1} d_{1/2} h_{\delta}(x,y) d_{1/2} \varphi(x,y) \frac{dy dx}{| x-y |} = \int_{S^1} \int_{S^1} \Omega_{\delta}(x,y) d_{1/2} \varphi(x,y) \frac{dy dx}{| x-y |}, \quad \forall \varphi \in C^{\infty}(S^1).$$
Existence of such a solution is immediate, as in the case of $\Omega_{\delta}$, one may define the divergence directly as a smooth function. One may immediately notice that since $\Omega_{\delta} \in L^{2}_{od}$, we have:
$$(-\Delta)^{1/2} h_{\delta} \in H^{-1/2}(S^1) \Rightarrow h_{\delta} \in H^{1/2}(S^1),$$
together with the estimate:
\begin{align}
	\| h_{\delta} \|_{\dot{H}^{1/2}}^2	&= \int_{S^1} | d_{1/2} h_{\delta} |^2 dx \notag \\
							&= \int_{S^1} \left( \Omega_{\delta}(x,y) - \Omega(x,y) \right) d_{1/2} h_{\delta}(x,y) \frac{dy dx}{| x-y |} \notag \\
							&\lesssim \| \Omega_{\delta} - \Omega \|_{L^{2}_{od}} \| h_{\delta} \|_{\dot{H}^{1/2}},
\end{align}
ultimately proving:
$$\| h_{\delta} \|_{\dot{H}^{1/2}} \lesssim \| \Omega_{\delta} - \Omega \|_{L^{2}_{od}} \leq \frac{\varepsilon}{2},$$
where we used in the computation above that $\div_{1/2} \Omega = 0$. Therefore, by choosing $\delta > 0$ sufficiently small, we have:
\begin{equation}
	\| \Omega_{\delta} - d_{1/2} h_{\delta} - \Omega \|_{L^{2}_{od}} \lesssim \varepsilon,
\end{equation}
as well as:
\begin{equation}
	\div_{1/2} \left( \Omega_{\delta} - d_{1/2} h_{\delta} \right) = \div_{1/2} \Omega_{\delta} - (-\Delta)^{1/2} h_{\delta} = 0.
\end{equation}
It should be emphasised that $(-\Delta)^{1/2} = \div_{1/2} \circ d_{1/2}$ in complete analogy to $\Delta = \div \circ \nabla$. This is precisely the desired approximation and thus concludes the proof of Lemma \ref{reglemma}. $\qed$

\subsection{Bubbling-Analysis and Concentration of Energy} \label{bubbling}

In the remaining two subsections, we will treat two new results: General global existence (extending, for example \cite{schisirewang}) and bubbling in finite time (thus investigating the behaviour in critical times more closely). Both questions have been addressed in the local framework in the case of the harmonic gradient flow, see \cite{struwe1}, however similar considerations in the non-local world require some care in adapting the arguments. For example, the rescaling technique is not immediately applicable on $S^1$ and the non-locality of the equation necessitates an investigation of the limiting equation in detail. As a result, we shall present the proofs in detail and provide insight into the mechanisms behind the bubbling and global existence theorem.\\

In the current section, we will first study the concentration of energy in greater detail and with more precise estimates. Two main results shall be obtained: Firstly, we shall improve the following Lemma 3.16 in \cite{wettsteinsphere}:

\begin{lem}
	\label{lemmajw}
	There exist $C >  0$ not depending on $R, u, T$, such that for any smooth $u$ on $[0,T] \times S^1$ and $0 < R < 1$, the following estimate holds for all $x_0 \in S^1$:
	\begin{align}
		\int_{0}^{T} \int_{B_{\frac{3R}{4}}(x_0)} | (-\Delta)^{1/4} u |^4 dx dt 	&\leq C \sup_{0\leq t \leq T} \int_{B_{R}(x_0)} | (-\Delta)^{1/4} u(t) |^2 dx \notag \\
															&\cdot \left( \int_{0}^{T} \int_{B_{R}(x_0)} | (-\Delta)^{1/2} u |^2 dx dt + \frac{1}{R^2} \int_{0}^{T} \int_{S^1} | (-\Delta)^{1/4}u |^2 dx dt \right),
	\end{align}
	by density the same result applies for all $u \in H^{1}([0,T] \times S^1)$, and all boundary terms $u_0 = u(0, \cdot) \in H^{1/2}(S^1)$, with bounded $1/2$-Dirichlet energy. Similarily, we have:
	\begin{align}
		\label{struweest02old}
		\int_{0}^{T} \int_{S^1} | (-\Delta)^{1/4} u |^4 dx dt 	&\lesssim \sup_{0\leq t \leq T, x \in S^1} \int_{B_{R}(x)} | (-\Delta)^{1/4} u(t) |^2 dx \notag \\																			&\cdot \left( \int_{0}^{T} \int_{S^1} | (-\Delta)^{1/2} u |^2 dx dt + \frac{1}{R^3} \int_{0}^{T} \int_{S^1} | (-\Delta)^{1/4}u |^2 dx dt \right).
	\end{align}
\end{lem}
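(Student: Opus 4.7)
The plan is to prove the first (local) estimate via a fractional Ladyzhenskaya-type inequality applied to a cut-off of $v := (-\Delta)^{1/4} u$, and to deduce the second (global) one from the first by a covering argument. The key one-dimensional Gagliardo--Nirenberg inequality is
$$\|\phi\|_{L^4(\R)}^2 \lesssim \|\phi\|_{L^2(\R)} \cdot \|\phi\|_{\dot H^{1/2}(\R)},$$
which follows from the critical Sobolev embedding $\dot H^{1/4}(\R) \hookrightarrow L^4(\R)$ in dimension one together with the interpolation identity $\|\phi\|_{\dot H^{1/4}}^2 \lesssim \|\phi\|_{L^2} \|\phi\|_{\dot H^{1/2}}$.

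I would fix a cut-off $\eta \in C^\infty(S^1)$ with $\operatorname{supp}\eta \subset B_R(x_0)$, $\eta \equiv 1$ on $B_{3R/4}(x_0)$, $0 \leq \eta \leq 1$ and $\|\eta'\|_{L^\infty} \lesssim 1/R$, extend $\eta v$ by zero to $\R$, and apply the inequality above to $\phi = \eta v$ at each fixed time $t$. The factor $\|\eta v(t)\|_{L^2}^2 \leq \int_{B_R(x_0)} |(-\Delta)^{1/4} u(t)|^2$ will be pulled out of the time integral as the supremum after integration. For the other factor, the commutator decomposition
$$(-\Delta)^{1/4}(\eta v) = \eta \cdot (-\Delta)^{1/2} u + [(-\Delta)^{1/4}, \eta]\,v$$
yields
$$\|\eta v\|_{\dot H^{1/2}}^2 \lesssim \int_{B_R(x_0)} |(-\Delta)^{1/2} u|^2\, dx + \|[(-\Delta)^{1/4}, \eta]\,v\|_{L^2}^2.$$
Using the singular-integral formula for $(-\Delta)^{1/4}$, the commutator rewrites as
$$[(-\Delta)^{1/4}, \eta]\,v(x) = C \int_{S^1} \frac{(\eta(x) - \eta(y))\, v(y)}{|x-y|^{3/2}} \, dy,$$
whose kernel $K(x,y)$ satisfies $|K(x,y)| \lesssim R^{-1} |x-y|^{-1/2}$ for $|x-y| < R$ and $|K(x,y)| \lesssim |x-y|^{-3/2}$ otherwise. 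A Schur-test computation then shows $\sup_x \int |K(x,y)| \,dy \lesssim R^{-1/2}$ and similarly in $x$, so that $\|[(-\Delta)^{1/4}, \eta]\,v\|_{L^2} \lesssim R^{-1/2} \|v\|_{L^2(S^1)}$; squaring and using $R < 1$ to pass from $R^{-1}$ to $R^{-2}$ produces the announced tail contribution. Combining, integrating in time, and pulling the supremum out of the time integral yields the first inequality.

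For the global inequality, I would cover $S^1$ by $O(1/R)$ balls $B_{3R/4}(x_i)$ with bounded overlap, apply the first estimate in each ball, and sum: the local integrals of $|(-\Delta)^{1/2} u|^2$ reassemble into the global one thanks to the bounded overlap, while the tail term is multiplied by the number of balls $\sim 1/R$, producing the $1/R^3$ scaling. The density argument extending the estimate from smooth $u$ to arbitrary $u \in H^1([0,T] \times S^1)$ is routine since the right-hand side is controlled by semi-norms that behave well under smooth approximation.

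The main obstacle is the commutator estimate: because $(-\Delta)^{1/4}$ is non-local, introducing a cut-off produces a genuine error term whose $L^2$-operator-norm has to be tracked with its correct $R$-dependence. This is precisely what distinguishes the half-harmonic case from Struwe's classical argument for the harmonic flow, where the analogous commutator between $\nabla$ and a scalar cut-off is of no issue. The pointwise kernel formula together with Schur's test is what makes the scaling $R^{-1/2}$ explicit and ultimately determines the exponents $R^{-2}$ and $R^{-3}$ in the tail terms.
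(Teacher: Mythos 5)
Your proof is correct and follows essentially the same route as the paper's argument: introduce a cut-off $\eta$, write $(-\Delta)^{1/4}(\eta v)$ via a commutator decomposition, bound the error term, and then apply a fractional Gagliardo--Nirenberg (Ladyzhenskaya) inequality at each fixed time before pulling the supremum outside the time integral. The cut-off commutator with kernel $(\eta(x)-\eta(y))/|x-y|^{3/2}$ is exactly the object that the paper's proof, quoting \cite{wettstein}, also isolates.

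The one genuine divergence is how the commutator is estimated. The paper's argument applies Cauchy--Schwarz pointwise in $x$ (inequality \eqref{tobeused}) and then bounds the resulting weight $\int_{S^1}\frac{|\varphi(x)-\varphi(y)|^2}{|x-y|^{5/2}}\,dy$ crudely by $R^{-2}$; you instead run a Schur test directly on the kernel $K(x,y)=(\eta(x)-\eta(y))/|x-y|^{3/2}$, splitting at $|x-y|=R$ and using the Lipschitz bound $|\eta(x)-\eta(y)|\lesssim R^{-1}|x-y|$ inside. Your computation is right and yields $\|[(-\Delta)^{1/4},\eta]\|_{L^2\to L^2}\lesssim R^{-1/2}$, i.e.\ an $R^{-1}$ tail after squaring, which you then deliberately relax to $R^{-2}$. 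It is worth noting what this buys: if you keep the sharp $R^{-1}$ in the local estimate, the covering argument with $O(1/R)$ balls produces $R^{-2}$ in the global tail --- that is, your Schur-test route proves the \emph{improved} Lemma~\ref{improvedlemmajw} directly, whereas the paper has to redo the analysis with a more delicate summation over cut-offs $\varphi_j$ to get from the $R^{-3}$ of \eqref{struweest02old} down to the $R^{-2}$ of \eqref{struweest02}. So your approach, while equivalent in spirit, is actually cleaner for the sharper exponent. One small technical remark: on $S^1$ the kernel of $(-\Delta)^{1/4}$ is not literally $|x-y|^{-3/2}$ but only has this leading singularity near the diagonal; both your argument and the paper's tacitly work with the model kernel, and the lower-order corrections are integrable and do not affect the Schur bound.
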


The improvement will be in the order of power of $R$ that occurs and this is indeed crucial for a non-local rescaling argument to work. Namely, we shall show that $R^{-3}$ may be replaced by $R^{-2}$ which allows for suitable rescaling and a blow-up procedure. Secondly, we will connect the condition \eqref{concentrationcond} to an analogous condition for the localised energy in balls, sacrificing potentially focus by allowing for "larger" balls in which the localised Gagliardo-seminorms are bounded from below. Observe that due to the non-local nature of the $1/4$-Laplacian, $\varepsilon(R; u, t)$ takes into account not only value of $u(t,x)$ in a ball, but on the entire $S^1$. However, contributions "far away" are not as important (these are dealt with by enlarging the balls under consideration) and thus we may restrict our attention to the local Gagliardo seminorms on balls.

\subsubsection{An Improved Version of Lemma 3.16 in \cite{wettsteinsphere}}

In this brief subsection, we shall argue why the following refinement of Lemma 3.16 in \cite{wettsteinsphere} holds true:

\begin{lem}
	\label{improvedlemmajw}
	There exist $C >  0$ not depending on $R, u, T$, such that for any smooth $u$ on $[0,T] \times S^1$ and $0 < R < 1/2$, the following estimate holds for all $x_0 \in S^1$:
	\begin{align}
	\label{struweest01}
		\int_{0}^{T} \int_{B_{\frac{3R}{4}}(x_0)} | (-\Delta)^{1/4} u |^4 dx dt 	&\leq C \sup_{0\leq t \leq T} \int_{B_{R}(x_0)} | (-\Delta)^{1/4} u(t) |^2 dx \notag \\
															&\cdot \left( \int_{0}^{T} \int_{B_{R}(x_0)} | (-\Delta)^{1/2} u |^2 dx dt + \frac{1}{R^2} \int_{0}^{T} \int_{S^1} | (-\Delta)^{1/4}u |^2 dx dt \right),
	\end{align}
	by density the same result applies for all $u \in H^{1}([0,T] \times S^1)$, and all boundary terms $u_0 = u(0, \cdot) \in H^{1/2}(S^1)$, with bounded $1/2$-Dirichlet energy. Similarily, we have:
	\begin{align}
		\label{struweest02}
		\int_{0}^{T} \int_{S^1} | (-\Delta)^{1/4} u |^4 dx dt 	&\lesssim \sup_{0\leq t \leq T, x \in S^1} \int_{B_{R}(x)} | (-\Delta)^{1/4} u(t) |^2 dx \notag \\																			&\cdot \left( \int_{0}^{T} \int_{S^1} | (-\Delta)^{1/2} u |^2 dx dt + \frac{1}{R^2} \int_{0}^{T} \int_{S^1} | (-\Delta)^{1/4}u |^2 dx dt \right).
	\end{align}
\end{lem}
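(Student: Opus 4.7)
The plan is to prove the local inequality \eqref{struweest01} by a cutoff-and-Gagliardo-Nirenberg argument with a sharpened commutator estimate, and then deduce the global inequality \eqref{struweest02} by summing the local bound over a finite cover of $S^1$. The passage from the previous $R^{-3}$ in Lemma \ref{lemmajw} to the new $R^{-2}$ comes precisely from reducing the loss in the commutator $[(-\Delta)^{1/4},\eta]$ against a cutoff $\eta$ of scale $R$ from $R^{-1}$ to $R^{-1/2}$ in $L^{2}$.

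Fix $x_{0}\in S^{1}$ and choose a smooth cutoff $\eta$ with $\eta\equiv 1$ on $B_{3R/4}(x_{0})$, $\operatorname{supp}\eta\subset B_{R}(x_{0})$ and $\|\eta^{(k)}\|_{L^{\infty}}\lesssim R^{-k}$. Set $b:=(-\Delta)^{1/4}u$. Applying the one-dimensional fractional Gagliardo-Nirenberg inequality $\|f\|_{L^{4}}^{4}\lesssim\|f\|_{L^{2}}^{2}\|(-\Delta)^{1/4}f\|_{L^{2}}^{2}$ to $f=\eta b$ (viewed on a fundamental domain of $S^{1}$) gives, pointwise in $t$,
\begin{equation*}
\int_{B_{3R/4}(x_{0})}|b|^{4}\,dx\leq\|\eta b\|_{L^{4}}^{4}\lesssim\|\eta b\|_{L^{2}}^{2}\cdot\|(-\Delta)^{1/4}(\eta b)\|_{L^{2}}^{2},
\end{equation*}
where $\|\eta b\|_{L^{2}}^{2}\leq\int_{B_{R}(x_{0})}|(-\Delta)^{1/4}u|^{2}\,dx$. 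To bound the second factor, decompose
\begin{equation*}
(-\Delta)^{1/4}(\eta b)=\eta\,(-\Delta)^{1/2}u+[(-\Delta)^{1/4},\eta]b.
\end{equation*}
The first summand is supported in $B_{R}(x_{0})$, whence $\|\eta(-\Delta)^{1/2}u\|_{L^{2}}\leq\|(-\Delta)^{1/2}u\|_{L^{2}(B_{R}(x_{0}))}$. The commutator admits the kernel representation
\begin{equation*}
[(-\Delta)^{1/4},\eta]b(x)=c\int\frac{(\eta(x)-\eta(y))\,b(y)}{|x-y|^{3/2}}\,dy,
\end{equation*}
which I split at $|x-y|=R$: on the near-field $|x-y|<R$ the bound $|\eta(x)-\eta(y)|\lesssim R^{-1}|x-y|$ yields convolution against $R^{-1}|z|^{-1/2}\chi_{|z|<R}$, whose $L^{1}$-norm is $\lesssim R^{-1/2}$; on the far-field $|x-y|\geq R$ the bound $|\eta(x)-\eta(y)|\lesssim 1$ yields convolution against $|z|^{-3/2}\chi_{|z|\geq R}$, whose $L^{1}$-norm is again $\lesssim R^{-1/2}$. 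By Young's convolution inequality, $\|[(-\Delta)^{1/4},\eta]b\|_{L^{2}}\lesssim R^{-1/2}\|b\|_{L^{2}}$, so that after squaring
\begin{equation*}
\|(-\Delta)^{1/4}(\eta b)\|_{L^{2}}^{2}\lesssim\|(-\Delta)^{1/2}u\|_{L^{2}(B_{R}(x_{0}))}^{2}+R^{-1}\|(-\Delta)^{1/4}u\|_{L^{2}(S^{1})}^{2}.
\end{equation*}
Combining with GN, integrating over $[0,T]$ and pulling $\sup_{t}$ onto the $\|\eta b\|_{L^{2}}^{2}$-factor yields \eqref{struweest01} with the even stronger weight $R^{-1}$ in the tail, which a fortiori implies the claimed $R^{-2}$ bound for $R<1/2$.

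For the global estimate \eqref{struweest02}, cover $S^{1}$ by a family of balls $\{B_{R/2}(x_{j})\}_{j=1}^{N}$ with bounded overlap and cardinality $N\asymp R^{-1}$, and apply the sharpened local estimate on each. By bounded overlap, the terms $\|(-\Delta)^{1/2}u\|_{L^{2}(B_{R}(x_{j}))}^{2}$ sum to $\lesssim\int_{0}^{T}\int_{S^{1}}|(-\Delta)^{1/2}u|^{2}\,dx\,dt$, while the $N\asymp R^{-1}$ copies of the $R^{-1}$-weighted non-local tail sum to exactly $R^{-2}\int_{0}^{T}\int_{S^{1}}|(-\Delta)^{1/4}u|^{2}\,dx\,dt$, and each local supremum is dominated by the uniform $\sup_{t,x}\int_{B_{R}(x)}|(-\Delta)^{1/4}u|^{2}$. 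Density of smooth maps in $H^{1}([0,T]\times S^{1})$ then extends the result. The main obstacle is the sharp $R^{-1/2}$-commutator bound: a naive Leibniz-type decomposition $(-\Delta)^{1/4}(\eta b)\simeq\eta(-\Delta)^{1/4}b+b(-\Delta)^{1/4}\eta+T(\eta,b)$ combined with $\|(-\Delta)^{1/4}\eta\|_{L^{\infty}}\lesssim R^{-1/2}$ requires controlling the three-commutator remainder and does not localise the $(-\Delta)^{1/2}u$ term to $B_{R}(x_{0})$, whereas the direct kernel split exploits the specific $|x-y|^{-3/2}$ decay and produces both the localisation and the correct scaling in a single stroke.
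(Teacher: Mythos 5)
Your proof is correct, and it takes a genuinely different route from the paper's, one that is arguably cleaner and yields a slightly sharper intermediate bound.

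The paper proves the improvement not at the single-ball level but at the level of the covering. It applies pointwise Cauchy--Schwarz inside the commutator integral, producing a product of $\int |(-\Delta)^{1/4}u(y)|^2\,|x-y|^{-1/2}\,dy$ with $\int |\varphi_j(x)-\varphi_j(y)|^2\,|x-y|^{-5/2}\,dy$ (display \eqref{tobeused}), then sums the latter over the cover. The $R^{-2}$ emerges only after a careful splitting of the balls, for fixed $x$, into finitely many near-field ones (contributing $\lesssim R^{-2}$) and the remaining far-field ones at distances $(j+\delta)R$, giving a geometric tail $\lesssim R^{-3/2}$; the cruder single-ball bound \eqref{crude} is still $R^{-2}$, and naively summing the $\sim R^{-1}$ balls would reproduce the old $R^{-3}$ loss. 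You bypass this resummation entirely: the Schur-test bound $\|[(-\Delta)^{1/4},\eta]\|_{L^2\to L^2}\lesssim R^{-1/2}$ (via the split of the kernel $|x-y|^{-3/2}(\eta(x)-\eta(y))$ at scale $|x-y|\sim R$, with near field controlled by the Lipschitz bound and far field by the kernel decay) already captures the scaling-critical smallness at the single-ball level. This produces $R^{-1}$ in the local tail --- sharper than the lemma's stated $R^{-2}$, which it of course implies --- after which the global $R^{-2}$ follows by the trivial overcounting $N\asymp R^{-1}$ without any geometric cancellation among balls. In effect, where the paper trades an $\ell^2$-type gain for an $\ell^1$-type crude estimate and then recovers it at the covering stage, you keep the sharp operator norm and never lose it.

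Two small things you should make explicit. First, the Gagliardo--Nirenberg step $\|f\|_{L^4}^4\lesssim\|f\|_{L^2}^2\,\|(-\Delta)^{1/4}f\|_{L^2}^2$ as written is the $\R$-version; on $S^1$ it acquires a lower-order term $\|f\|_{L^2}^4$ coming from the zero Fourier mode, and likewise the identification "viewed on a fundamental domain" mixes $(-\Delta)^{1/4}_{S^1}$ and $(-\Delta)^{1/4}_{\R}$, whose difference is a bounded $L^2$-operator for compactly supported $f$. In either formulation the extra contribution is $\|\eta b\|_{L^2}^4\leq\sup_t\|(-\Delta)^{1/4}u\|_{L^2(B_R)}^2\cdot\|(-\Delta)^{1/4}u\|_{L^2(S^1)}^2$, which is absorbed into the tail since $1<R^{-1}$; this should be stated. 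Second, the estimate you call Young's inequality is, for the non-convolution kernel $K(x,y)=(\eta(x)-\eta(y))|x-y|^{-3/2}$, more precisely a Schur test applied to the dominating convolution kernel; this is of course exactly what you use, and is correct, but naming it accurately would tighten the exposition.
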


\begin{proof}
	The key observation lies in the following estimate: In \cite{wettstein}, we used the rather crude estimate:
	\begin{align}
		&\int_{0}^{T} \int_{S^1} \Big{|} P.V. \int_{S^{1}} (-\Delta)^{1/4} u(y) \frac{\varphi(x) - \varphi(y)}{| x- y|^{3/2}} dy \Big{|}^2 dx dt \notag \\
		\label{tobeused}
		&\lesssim \int_{0}^{T} \int_{S^1} | (-\Delta)^{1/4} u(y) |^2 \frac{1}{| x- y |^{1/2}} dy \cdot \int_{S^1} \frac{| \varphi(x) - \varphi(y) |^2}{| x-y |^{5/2}} dy dx dt \\
		\label{crude}
		&\lesssim \frac{1}{R^{2}} \int_{0}^{T} \int_{S^1} | (-\Delta)^{1/4} u(y) |^2 dy dt,
	\end{align}
	where $\varphi$ is a cut-off function on some subset $B_{R}(x_0)$, $x_0 \in S^1$. In \cite{wettstein}, we then obtained \eqref{struweest02old} by summing for a suitable covering by balls with finite-intersection property the terms \eqref{crude}. Instead of using \eqref{crude}, we will now use \eqref{tobeused} and obtain a more precise estimate. For each fixed value $x \in S^1$, we have then a sum:
	$$\sum_{j \in I} \frac{| \varphi_{j}(x) - \varphi_{j}(y) |^2}{| x-y|^{5/2}},$$
	which we want to estimate in order to establish \eqref{struweest02} using \eqref{tobeused} and summation over a suitable covering. Here, $\varphi_{j}$ are the corresponding cut-offs to a suitable covering, i.e. they are supported in balls of radius $R$ ($\varphi_j$ being equal to $1$ on the ball with same center and radius $3/4 R$) with the property that every point is contained in at most $3$ of these balls. In fact, the covering should be as in \cite{wettsteinsphere}. Now, if $\varphi_{j}(x) \neq 0$, we use the estimate:
	$$\frac{| \varphi_{j}(x) - \varphi_{j}(y) |^2}{| x-y|^{5/2}} \leq \| \nabla \varphi \|_{L^{\infty}} \frac{1}{| x-y |^{1/2}} \lesssim \frac{1}{R^{2}} \cdot \frac{1}{| x-y |^{1/2}}$$
	Notice that $\varphi(x) \neq 0$ only holds true for finitely many $j$, this number being independent of $R$, so by integrating over $S^{1}$ and exploiting the integrability of $1/|x-y|^{1/2}$ on $S^1$, we deduce that the contribution of these terms may be bounded by $1/R^2$.\\
	
	Next, we have to consider all terms with $\varphi_j (x) = 0$. By choice of the covering in \cite{wettsteinsphere}, it is clear that then:
	$$| x-y | \geq \delta R,$$
	for some $\delta > 0$ independent of $R$.
	Indeed, the cover may be chosen in such a way that for $\delta > 0$ small and independent of $R$, we have that for $x \in S^1$, $B_{\delta R}(x)$ lies in one of the balls of the covering. Then only finitely many have non-empty intersection with this ball around $x$ and thus all others must satisfy 
	$$| x - y | \geq \delta R,$$
	for $y$ in the remaining balls. Taking next the ball which gets closest to $x$ among all with empty intersection with $B_{\delta R}(x)$, we have that again only finitely many have non-empty intersection with this one, all others satisfy 
	$$| x-y | \geq (\delta +1)R,$$
	for $y$ these balls. Iterating such an argument and observing that the number of intersecting balls may be controlled independent of $R$, we arrive at the estimate ultimately required.
	Thus, by integration of these summands, we obtain a sum of the form:
	$$\sum_{j \in \mathbb{N}_{0}} \frac{1}{R^{3/2}} \cdot \frac{3}{(j + \delta)^{3/2}} \lesssim \frac{1}{R^{3/2}} \lesssim \frac{1}{R^2}, \quad \forall R \in ]0,1/2[$$
	Indeed, observe that the covering may be chosen in such a way that at each point, at most $3$ of the balls intersect. Noting that we may select balls and describe the distance between $x$ and the corresponding balls in terms of $(j + \delta) R$, the statement becomes apparent. Then by integrating $1/|x-y|^{5/2}$ explicitly, we obtain the sum above. Combining both contributions, we get the improved estimate \eqref{struweest02} by arguing as in \cite{wettstein}.
\end{proof}

Such a result also allows for a slightly more refined version of Lemma 3.19 in \cite{wettsteinsphere}:

\begin{lem}
\label{lemmaestpaper}
	There exists $\varepsilon_1 > 0$ such that for any $u \in H^{1}([0,T] \times S^1) \cap L^{\infty}([0,T]; H^{1/2}(S^1))$ solving:
	$$\partial_{t} u + (-\Delta)^{1/2} u \perp T_u N \quad \text{ in } \mathcal{D}'([0,T] \times S^1)$$ 
	with values in $N$ and any $R < 1/2$, there holds:
			\begin{equation}
				\int_{0}^{T} \int_{S^1} | \nabla u |^2 dx dt \leq C E(u_0) \left( 1 + \frac{T}{R^2} \right),
			\end{equation}
			with $C$ independent of $u, T, R$, provided $\varepsilon(R) < \varepsilon_{1}$. Here, $u(0, \cdot) = u_0 \in H^{1/2}(S^1;N)$ is the initial value.
\end{lem}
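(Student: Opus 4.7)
The plan is to test the equation against $(-\Delta)^{1/2} u$, exploit the energy decay identity that arises from the tangentiality $\partial_t u \in T_u N$, and absorb the resulting quadratic nonlinear term into the left-hand side using the improved $L^4$-estimate of Lemma \ref{improvedlemmajw}. A first useful observation is that on $S^1$, Parseval gives $\int_{S^1} |\nabla u|^2 \, dx = \int_{S^1} |(-\Delta)^{1/2} u|^2 \, dx$, so it suffices to estimate $I := \int_0^T \int_{S^1} |(-\Delta)^{1/2} u|^2 \, dx \, dt$. Second, since $u(t, \cdot)$ takes values in the closed target manifold $N$, the orthogonality in the lemma statement can be written as $\partial_t u + (-\Delta)^{1/2} u = A(u)(d_{1/2} u, d_{1/2} u)$ (a normal-vector-valued nonlinearity), with a pointwise bound $|A(u)(d_{1/2} u, d_{1/2} u)| \lesssim |d_{1/2} u|^2$ from the boundedness of the second fundamental form of $N$.

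Testing the equation against $(-\Delta)^{1/2} u$ and integrating on $[0,T] \times S^1$, the identity
$$ \int \partial_t u \cdot (-\Delta)^{1/2} u \, dx = \int \partial_t ((-\Delta)^{1/4} u) \cdot (-\Delta)^{1/4} u \, dx = \frac{d}{dt} E_{1/2}(u) $$
together with the non-increase of the $1/2$-Dirichlet energy yields
\begin{equation}
    I \leq E_{1/2}(u_0) + \left| \int_0^T \int_{S^1} A(u)(d_{1/2} u, d_{1/2} u) \cdot (-\Delta)^{1/2} u \, dx \, dt \right|.
\end{equation}
Cauchy-Schwarz together with the pointwise bound above reduces matters to controlling $\int_0^T \int_{S^1} |d_{1/2} u|^4 \, dx \, dt$. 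The Triebel-Lizorkin equivalence in Theorem \ref{schiwangthm1.4} (with $s = 1/2$, $p = 4$, $q = 2$, for which the hypothesis $p > q/(1 + sq) = 4/3$ is satisfied) gives $\int |d_{1/2} u|^4 \, dx \simeq \int |(-\Delta)^{1/4} u|^4 \, dx$. Applying then Lemma \ref{improvedlemmajw} together with the trivial bound $\int_0^T \int |(-\Delta)^{1/4} u|^2 \, dx \, dt \leq 2 T \cdot E_{1/2}(u_0)$ yields, under the smallness assumption $\varepsilon(R) \leq \varepsilon_1$,
\begin{equation}
    \int_0^T \int_{S^1} |d_{1/2} u|^4 \, dx \, dt \lesssim \varepsilon_1 \left( I + \frac{T \, E_{1/2}(u_0)}{R^2} \right).
\end{equation}

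Plugging this into the Cauchy-Schwarz bound, the nonlinear term is controlled by $C \varepsilon_1^{1/2} (I + T E_{1/2}(u_0)/R^2)^{1/2} I^{1/2} \leq C \varepsilon_1^{1/2} I + C \varepsilon_1^{1/2} I^{1/2} \sqrt{T E_{1/2}(u_0)}/R$, and a further application of Young's inequality $ab \leq a^2/4 + b^2$ to the last term produces $\tfrac{1}{4} I + C' \varepsilon_1 T E_{1/2}(u_0)/R^2$. Choosing $\varepsilon_1$ so small that $C \varepsilon_1^{1/2} + 1/4 < 1/2$ permits absorption of the two $I$-terms into $I$ on the left, giving $I \lesssim E_{1/2}(u_0) (1 + T/R^2)$, which is the asserted estimate.

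The principal obstacle is precisely this absorption step: it hinges on the improved $R^{-2}$ prefactor in Lemma \ref{improvedlemmajw}, without which Young's inequality would propagate a $T/R^3$ correction and the resulting bound would be too weak to support the rescaling and blow-up analysis used in the subsequent bubbling arguments. The other ingredients, namely the Parseval identity on $S^1$, the Triebel-Lizorkin equivalence, and the identification $\int \partial_t u \cdot (-\Delta)^{1/2} u \, dx = \frac{d}{dt} E_{1/2}(u)$, are essentially algebraic and raise no further difficulty.
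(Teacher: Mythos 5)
Your proof is correct and reconstructs exactly the Struwe-type argument that the paper delegates to \cite{wettstein} and \cite{struwe1}: test against $(-\Delta)^{1/2}u$, reduce the nonlinear term to an $L^4$-bound on $|d_{1/2}u|$ via the Triebel--Lizorkin equivalence of Theorem \ref{schiwangthm1.4}, invoke the Ladyzhenskaya-type estimate of Lemma \ref{improvedlemmajw}, and absorb for $\varepsilon_1$ small. You also correctly isolate the role of the improved $R^{-2}$ prefactor as the point of the refinement, which is precisely the observation the paper's one-line proof emphasises.
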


The proof is as in \cite{wettstein} or \cite{struwe1}, the only change lies in the application of Lemma \ref{improvedlemmajw} instead of Lemma 3.16 in \cite{wettsteinsphere}. This improved version will be crucial in the blow-up procedure, as it will enable us to deduce that the $H^1$-energy is bounded and thus leads to a good solution after extracting a weakly convergent subsequence, since we have now an appropriate scaling-behaviour of time and space variable.

\subsubsection{Lower Bound for Local Gagliardo Seminorms}

Next, we would like to establish a connection between the concentration condition \eqref{concentrationcond} at blow-up points and the Gagliardo-seminorms at the same points. The intuition behind the estimate is that whenever $1/2$-Dirichlet energy concentrates close to a point, then also the localised Gagliardo seminorm around the same point should concentrate, just as it is the case for the harmonic gradient flow in some sense (the statement is however tautological in this case, as the energy is already local). Due to the non-local nature, however, contributions from further away may still be significant, forcing us to include a bigger domain in the estimate of the seminorm than in the $1/2$-energy to avoid concentration in "neck regions" that we would otherwise not account for. The key connection is the following:

\begin{prop}
\label{prop1}
	Let $\varepsilon >0$ be given and $N$ big enough depending on $\varepsilon$. Assume that $u \in H^{1/2}(S^1)$ with $| u | \leq 1$ and such that:
	$$\int_{B_{R}(x_0)} | (-\Delta)^{1/4} u |^2 dx \geq \varepsilon,$$
	for some $R < 2^{-N-1}$. Then there is a $\delta > 0$ depending only on $n$ and $\varepsilon$, such that:
	$$\int_{B_{2^{N}R}(x_0)} \int_{B_{2^{N}R}(x_0)} \frac{| u(x) - u(y) |^2}{| x-y |^{2}} dy dx \geq \delta.$$
\end{prop}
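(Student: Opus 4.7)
The plan is to argue the contrapositive: if the local Gagliardo seminorm satisfies $\iint_{B_{2^N R}(x_0)\times B_{2^N R}(x_0)} \frac{|u(x)-u(y)|^2}{|x-y|^2}\,dy\,dx \le \delta$, then $\int_{B_R(x_0)}|(-\Delta)^{1/4}u|^2\,dx \ls \delta + 2^{-N}$. Choosing $N$ large and $\delta$ small enough (both depending only on $\eps$, with universal implicit constants) will contradict the hypothesized lower bound $\eps$.

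The first step is a cutoff-plus-average decomposition. Fix a smooth $\chi$ with $\chi\equiv 1$ on $B_{2^{N-1}R}(x_0)$, supported in $B_{2^N R}(x_0)$, with $\|\chi'\|_{L^\infty}\ls (2^{N-1}R)^{-1}$. Let $\bar u$ be the average of $u$ over $B_{2^N R}(x_0)$ and set $w := u - \bar u$. Since $(-\Delta)^{1/4}$ annihilates constants, $(-\Delta)^{1/4}u = (-\Delta)^{1/4}(\chi w) + (-\Delta)^{1/4}((1-\chi)w)$. For the tail part, observe that for $x \in B_R(x_0)$ we have $(1-\chi)(x)=0$, while the integrand is supported on $\{y \notin B_{2^{N-1}R}(x_0)\}$, where $|x-y|\gtrsim 2^{N-1}R$; combining $|w|\le 2$ with the principal-value kernel $|x-y|^{-3/2}$ and integrating over $B_R(x_0)$ yields $\int_{B_R(x_0)}|(-\Delta)^{1/4}((1-\chi)w)|^2\,dx \ls 2^{-N}$.

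The main term is $(-\Delta)^{1/4}(\chi w)$, controlled via $\|(-\Delta)^{1/4}(\chi w)\|^2_{L^2(S^1)} \simeq \|\chi w\|^2_{\dot H^{1/2}(S^1)}$, and the task is to bound this Gagliardo seminorm by $\ls\delta$. I split the double integral into a diagonal piece on $B_{2^N R}(x_0)\times B_{2^N R}(x_0)$ and a cross piece (where $\chi(y)=0$). On the diagonal, the identity $\chi w(x)-\chi w(y) = \chi(x)(w(x)-w(y)) + (\chi(x)-\chi(y))w(y)$ decouples the integrand: the first summand is bounded directly by the hypothesis, and the second by combining the uniform estimate $\int|\chi(x)-\chi(y)|^2|x-y|^{-2}\,dy \ls (2^{N-1}R)^{-1}$ with the fractional Poincaré inequality $\|w\|^2_{L^2(B_{2^N R})} \ls 2^N R\cdot\delta$. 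For the cross piece, the interior portion $x\in B_{2^{N-1}R}(x_0)$ uses $\int_{y\notin B_{2^N R}}|x-y|^{-2}\,dy \ls (2^{N-1}R)^{-1}$ together with Poincaré; for the annular portion $x\in B_{2^N R}\setminus B_{2^{N-1}R}$ one uses that a smooth cutoff vanishing on $\partial B_{2^N R}$ obeys $\chi(x)^2 \ls d(x)^2(2^{N-1}R)^{-2}$, with $d(x):=\mathrm{dist}(x,\partial B_{2^N R})$, together with $\int_{y\notin B_{2^N R}}|x-y|^{-2}\,dy \ls d(x)^{-1}$, so that the combined weight $\chi^2 d^{-1} \ls d(x)(2^{N-1}R)^{-2}$ is integrable and reduces, after Poincaré, to $\ls\delta$.

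The main obstacle is the annular contribution to the cross term: the cutoff is transitioning precisely where the cross-kernel $|x-y|^{-2}$ becomes singular as $y$ crosses $\partial B_{2^N R}$. The crucial observation is that $\chi(x)\ls d(x)/(2^{N-1}R)$ in the annulus, so that $\chi^2$ provides exactly one power of $d(x)$, which cancels the singularity $d(x)^{-1}$ coming from the kernel integral and leaves an integrable weight whose $L^1$-norm against $|w|^2$ is again controlled by Poincaré. Once this delicate step is established, assembling the estimates yields $\int_{B_R(x_0)}|(-\Delta)^{1/4}u|^2\,dx \ls \delta + 2^{-N}$; picking $N$ so that $2^{-N}\le\eps/4$ and $\delta$ so that the implicit constant times $\delta\le\eps/4$ completes the contradiction and produces the required lower bound $\delta = \delta(n,\eps)>0$.
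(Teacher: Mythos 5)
Your proof is correct, and it takes a genuinely different route from the one in the paper. The paper argues Proposition \ref{prop1} by contradiction and compactness: assuming a sequence $u_n$ with local Gagliardo seminorm tending to $0$ but $\int_{B_R}|(-\Delta)^{1/4}u_n|^2\geq\varepsilon$, it normalizes means, invokes an $H^{1/2}$ extension operator from $B_{2^N R}(x_0)$ to $S^1$ (via \cite{hitchhiker}) to produce $v_n$ with $\|v_n\|_{H^{1/2}(S^1)}\to 0$, and then exploits that $u_n-v_n$ vanishes on $B_{2^N R}(x_0)$ so that its $(-\Delta)^{1/4}$ energy on $B_R$ is a pure tail term $\lesssim 2^{-N}$. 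You instead give a direct quantitative estimate with no limiting argument: the decomposition $u=\bar u +\chi w +(1-\chi)w$, the identical $2^{-N}$ tail bound, and an explicit control of $\|\chi w\|^2_{\dot H^{1/2}(S^1)}\lesssim\delta$ combining the Leibniz identity for finite differences, the scaled fractional Poincar\'e inequality $\|w\|^2_{L^2(B_{2^N R})}\lesssim 2^N R\cdot\delta$, and the cancellation $\chi(x)^2/d(x)\lesssim (2^N R)^{-1}$ in the annular cross term. Both proofs hinge on the same two quantities (the truncated Gagliardo seminorm and the $2^{-N}$ tail), but your route is more elementary, avoids extension theory, and produces an effective $\delta(\varepsilon)$; it also handles the scale dependence explicitly through the Poincar\'e constant $2^N R$, rather than through the paper's somewhat glossed-over reduction ``by rescaling and rotations, possibly after using stereographic projection.''
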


In the proof, we shall clarify the necessary requirement for $N$. Also, the same proof continues to hold for arbitrary bounded $u$ with $\delta$ depending also on $\| u \|_{L^{\infty}}$.

\begin{proof}
	Firstly, we observe that the independence of $R$ and $x_0$ of $\delta$ may be obtained by rescaling and rotations, possibly after using stereographic projection. So we do not have to worry about such dependencies.\\
	
	Let us argue by contradiction: Assume the statement was wrong, then there exists a sequence $u_n \in H^{1/2}(S^1)$ of bounded functions, such that:
	$$\int_{B_{R}(x_0)} | (-\Delta)^{1/4} u_n |^2 dx \geq \varepsilon; \quad \int_{B_{2^{N}R}(x_0)} \int_{B_{2^{N}R}(x_0)} \frac{| u_n(x) - u_n(y) |^2}{| x-y |^{2}} dy dx < \frac{1}{n}$$
	In particular, we have (up to modifying the $u_n$ by a constant and extracting a subsequence):
	$$u_n \to u \quad \text{ in } H^{1/2}(B_{2^N R}(x_0))$$
	As seen in \cite{hitchhiker}, we may extend the $u_n \in H^{1/2}(B_{2^N R}(x_0))$ to $v_n \in H^{1/2}(S^1)$ which are still bounded by a common multiple of $1$ and such that:
	$$\| v_n \|_{H^{1/2}(S^1)} \lesssim \| u_n \|_{H^{1/2}(B_{2^N R}(x_0))} \to 0,$$
	which also shows:
	$$\lim_{n \to \infty} \int_{S^1} | (-\Delta)^{1/4} v_n |^2 dx = 0.$$
	Thus, to arrive at a contradiction, we just need to show:
	$$\liminf_{n \to \infty} \int_{B_{R}(x_0)} | (-\Delta)^{1/4} (u_n - v_n) |^2 dx < \varepsilon$$
	This can be easily obtained by observing:
	\begin{align}
		&\int_{B_{R}(x_0)} | (-\Delta)^{1/4} (u_n - v_n) |^2 dx \notag \\
		&\leq \int_{B_{R}(x_0)} \left( \int_{B_{2^{N} R}(x_0)^{c}} \frac{| u_n(y) - v_{n}(y)|}{| x-y |^{3/2}} dy \right)^{2} dx \notag \\
		&\lesssim \int_{B_{R}(x_0)} \left( \int_{B_{2^{N} R}(x_0)^{c}} \frac{1}{| x-y |^{3/2}} dy \right)^{2} dx \notag \\
		&\leq \int_{B_{R}(x_0)} \frac{1}{| x \mp 2^{N} R|} dx \notag \\
		&\lesssim | \log \left( 1 - 2^{-N} \right) | \lesssim 2^{-N} < \varepsilon,
	\end{align}
	provided $N$ was chosen sufficiently large at the beginning, depending on $\varepsilon$. Thus the required statement follows, as this contradicts our assumptions and thus provides the desired contradiction.
\end{proof}

The key feature of Proposition \ref{prop1} lies in the fact that it connects the localised (but still non-local) Gagliardo-seminorms to the concentration of energy. The power of $2$ that appears is due to the non-linearity and ensures that "not too much" energy is lost by restricting to balls. Ensuring that energy is stored in balls of sufficiently small radius is crucial to obtain half-harmonic maps in the limit.

\subsubsection{Bubbling-Analysis}

Having proved Lemma \ref{improvedlemmajw} as well as Proposition \ref{prop1}, we are now able to study the bubbling process in points where energy accumulates. The analysis is inspired by \cite{struwe1}, but has to take care of the non-local behaviour associated with the fractional Laplacian:

\begin{thm}
\label{thminpaper}
	Let $u$ be a solution as in Theorem \ref{mainresjw} and let $x_0 \in S^1$ be a point, such that:
	\begin{equation}
	\label{concentrationinpt}
		\limsup_{t \to T} \int_{B_{R}(x_0)} | (-\Delta)^{1/4} u |^2 dx \geq \varepsilon_1, \quad \forall R > 0,
	\end{equation}
	where $\varepsilon_1 > 0$ is as in \cite[Lemma 4.10]{wettstein}. Then there exists a half-harmonic map $v: \R \to S^{n-1}$, such that:
	\begin{equation}
		u_n \to v \quad \text{ weakly in } H^{1}(\R) \text{ and strongly in } H^{1/2}(\R),
	\end{equation}
	where $u_n$ is a suitable rescaling and translation of $u$.
\end{thm}

As stated in the introduction, an analogous result holds for any closed $N$ instead of $S^{n-1}$ as target manifold, up to some technical changes in the formulas. Additionally, we highlight that \eqref{concentrationcond} implies \eqref{concentrationinpt} at a suitable point by choosing subsequences. Therefore, Theorem \ref{thminpaper} actually concerns the behaviour of functions at the critical time in Theorem \ref{mainresjw}. It should be noted that the number of points $x_0$ satisfying \eqref{concentrationinpt} is finite due to the limited amount of energy available, so these points may not accumulate.

\begin{proof}
	Let us argue along the lines of \cite[Theorem 4.3]{struwe1}. The key idea is to rescale $u$ on subintervals of $[0, T[$ and apply the results in Lemma \ref{improvedlemmajw} and Proposition \ref{prop1} to deduce convergence. Let us always assume that $N$ is chosen large enough to allow for $\varepsilon = \varepsilon_1/2$ in Proposition \ref{prop1} and take $\delta > 0$ to be the associated lower bound for the Gagliardo seminorms.\\
	
	We now define rescalings as follows: For each $R > 0$, we have:
	\begin{equation}
		\varphi_{R}: \R \to S^1 \simeq \R / \Z \simeq [-\pi;\pi[,
	\end{equation}
	with the properties:
	$$\varphi_{R}(x) = R^2 x, \quad \forall x \in [-\frac{2^{N}}{R}, \frac{2^{N}}{R}]; \quad | \varphi_{R}'(x) | \leq R^2,$$
	and:
	$$\lim_{x \to \pm \infty} \varphi_{R}(x) = \pm \pi.$$
	The existence of such a function is clear.\\
	
	By \eqref{concentrationinpt} and choosing points $(t_n, x_n) \in [0,T[ \times S^1$ as in \cite{struwe1} with $t_n \to T, x_n \to x_0$ and such that:
	$$E_{R_n}(u(t_n, \cdot), x_n) = \varepsilon_1 = \sup_{0 < t \leq t_n, x \in B_{r}(x_0)} E_{R_n}(u(t, \cdot), x),$$
	where $R_n \to 0$ and $r > 0$ is chosen small enough that no other point with the property \eqref{concentrationinpt} is contained in $B_r(x_0)$. We shall now define:
	\begin{equation}
		u_n: [-\gamma, 0] \times \R \to S^{n-1}, \quad u_n(t,x) := u(t_n + R_{n}^{2}t, x_n + \varphi_{R_n}(x))
	\end{equation}
	Here, $\gamma > 0$ (using Lemma \ref{lemmaestpaper}) is chosen in such a way to ensure:
	$$E_{2R_{n}}(u(t); x_n) \geq \varepsilon_1/2, \quad \forall t \in [t_n - \gamma R_{n}^2, t_n].$$
	See also Lemma 4.9 in \cite{wettstein} for a justification of this fact and compare this with \cite{struwe1}. To define $x_n + \varphi_{R_n}(x)$, we may use the periodicity of $u$ in the space-variable. The key properties of these functions are their boundedness properties. For example, we have:
	\begin{align}
		&\int_{-\gamma}^{0} \int_{\R} | \nabla u_n(t,x) |^2 dx dt \notag \\
		&= \frac{1}{R_{n}^2} \int_{t_n - \gamma R_{n}^2}^{t_n} \int_{S^1} | \nabla u(s,y) |^2 | \varphi_{R_n}'(\varphi_{R_{n}}^{-1}(y)) |^2 | (\varphi_{R_{n}}^{-1})'(y) | dy ds \notag \\
		&= \frac{1}{R_{n}^2} \int_{t_n - \gamma R_{n}^2}^{t_n} \int_{S^1} | \nabla u(s,y) |^2 | \varphi_{R_n}'(\varphi_{R_{n}}^{-1}(y)) | dy ds \notag \\
		&\leq \int_{t_n - \gamma R_{n}^2}^{t_n} \int_{S^1} | \nabla u(s,y) |^2 dy ds \lesssim E(u_0),
	\end{align}
	where we used Lemma \ref{lemmaestpaper} as well as the choice of points $(t_n, x_n)$ as above. Notice that the chain rule is employed at one point to simplify the expression. Similarily:
	\begin{align}
		&\int_{-\gamma}^{0} \int_{B_{2^{N}/R_{n}}(0)} | \partial_t u_n(t,x) |^2 dx dt \notag \\
		&= R_{n}^2 \int_{t_n - \gamma R_{n}^2}^{t_n} \int_{B_{2^{N} R_{n}}(x_n)} | \partial_{t} u(s,y) |^2 | (\varphi_{R_{n}}^{-1})'(y) | dy dt \notag \\
		&= \int_{t_n - \gamma R_{n}^2}^{t_n} \int_{B_{2^{N} R_{n}}(x_n)} | \partial_{t} u(s,y) |^2 dy dt \notag \\
		&\leq \int_{t_n - \gamma R_{n}^2}^{t_n} \int_{S^1} | \partial_{t} u(s,y) |^2 dy dt \lesssim E(u_0)
	\end{align}
	One may now extract convergent subsequences as in \cite{struwe1}. Thus, we end up with sequences $u_n(\tau_n, \cdot)$ which converge weakly in $H^{1}(S^1)$ and strongly in $H^{1/2}(\R)$ to $v \in H^{1}(\R)$. Choosing the subsequence to be pointwise convergent a.e., we may even deduce:
	$$v \in S^{n-1} \quad \text{a.e.}$$
	Furthermore, Proposition \ref{prop1} shows, thanks to the concentration of energy, that:
	$$\delta \leq \int_{B_{2^{N} R_n}(x_n)} \int_{B_{2^{N} R_{n}}(x_n)} \frac{| u(t,x) - u(t,y) |^2}{| x-y |^2} dy dx,$$
	for all $t \in [t_n - \gamma R_n^2, t_n]$. This also shows:
	$$\delta \leq \int_{B_{2^{N}/R_n}(x_n)} \int_{B_{2^{N}/R_{n}}(x_n)} \frac{| u_n(\tau_n,x) - u_n(\tau_n,y) |^2}{| x-y |^2} dy dx$$
	Thus, by passing to the limit as $n \to \infty$:
	$$E_{1/2}(v) \geq \delta > 0,$$
	and so $v$ may not be constant. It remains to check that $v$ is actually half-harmonic. This is however an immediate consequence of the original equation:
	$$\partial_t u + (-\Delta)^{1/2} u = u | d_{1/2} u |^2$$
	Namely, since for $\tau_n$, we have:
	$$\partial_t u(\tau_n) \to 0,$$
	as $n \to \infty$ in $L^{2}_{loc}(\R)$, it remains to prove convergence of the other terms. Namely, we have for any $\varphi \in C^{\infty}_{c}(\R)$:
	\begin{align}
		&\int_{\R} \int_{\R} d_{1/2} v(x,y) d_{1/2} \varphi(x,y) \frac{dy dx}{| x-y |} \notag \\
		&= \lim_{n \to \infty} \int_{\R} \int_{\R} d_{1/2} u_n(\tau_n) (x,y) d_{1/2} \varphi(x,y) \frac{dy dx}{| x-y |} \notag \\
		&= \lim_{n \to \infty} \int_{B_{2^N / R_n}} \int_{B_{2^N / R_n}} d_{1/2} u_n(\tau_n) (x,y) d_{1/2} \varphi(x,y) \frac{dy dx}{| x-y |} \notag \\
		&= \lim_{n \to \infty} \int_{B_{2^N R_n}(x_n)} \int_{B_{2^N R_n}(x_n)} d_{1/2} u(\tau_n) (x,y) d_{1/2} \left( \varphi \circ \varphi_{R_n}^{-1} \right) (x,y) \frac{dy dx}{| x-y |} \notag \\
		&= \lim_{n \to \infty} \int_{S^1} \int_{S^1} d_{1/2} u(\tau_n) (x,y) d_{1/2} \left( \varphi \circ \varphi_{R_n}^{-1} \right) (x,y) \frac{dy dx}{| x-y |} \notag \\
		&= \lim_{n \to \infty} \left( \int_{S^1} -\partial_{t} u \cdot \varphi \circ \varphi_{R_n}^{-1} dx + \int_{S^1} u(\tau_n) | d_{1/2} u(\tau_n) |^2 \cdot \varphi \circ \varphi_{R_n}^{-1} dx \right) \notag \\
		&= \lim_{n \to \infty} \int_{S^1} u(\tau_n) | d_{1/2} u(\tau_n) |^2 \cdot \varphi \circ \varphi_{R_n}^{-1} dx \notag \\
		&= \lim_{n \to \infty} \int_{B_{2^N R_n}(x_n)} u(\tau_n) \int_{B_{2^{N} R_n}(x_n)} | d_{1/2} u(\tau_n)(x,y) |^2 \frac{dy}{| x-y |} \cdot \varphi \circ \varphi_{R_n}^{-1} dx \notag \\
		&=  \lim_{n \to \infty} \int_{B_{2^N / R_n}(x_n)} u_n(\tau_n) \int_{B_{2^{N} / R_n}(x_n)} | d_{1/2} u_n(\tau_n)(x,y) |^2 \frac{dy}{| x-y |} \cdot \varphi dx \notag \\
		&= \int_{\R} v | d_{1/2} v |^2 \varphi,
	\end{align}
	which is the desired equation. Notice that throughout the computations, we used several times that appropriate terms may be omitted due to the boundedness of $u_n(\tau_n)$ and $v$, leading to omissions of parts of the domain of integration, switching between the distance function on $S^1$ and $\R$ and similar terms. A crucial observation is that $\varphi$ is supported on a subdomain of $B_{2^{N} / R_n}$ for $R_n$ sufficiently small, so the estimates have good bounds everywhere, if $n$ goes to $\infty$.
	So we are done, since $v$ solves the half-harmonic map equation and thus is actually smooth, see \cite{daliopigati}. In particular, $v$ may be regarded as a $1/2$-harmonic map after composition with the stereographic projection.
\end{proof}

\subsection{Existence of Global Solutions} \label{global}

Finally, we have all the necessary tools at our disposal to tackle the global existence problem in full generality. The main idea will be that one is easily able to extend solutions on a finite time-interval by using convergence properties as $t$ goes to the critical time. A direct argument shows that the extension by gluing a solution at the critical time for appropriate initial data will give a global solution after at most finitely many such extensions.

\subsubsection{Proof by "Gluing"}

Let us show that we may extend a solution $u: [0,T[ \times S^1 \to S^{n-1}$ to be a weak solution on a slightly bigger time interval. This may be done by first observing that due to the monotone decay of energy:
\begin{equation}
	E_{1/2}(u(t)) \leq E_{1/2}(u_0) < + \infty
\end{equation}
Therefore, we may deduce that for an appropriate sequence $u(t_n) \to v \in H^{1/2}(S^1)$ with $t_n \to T$. Moreover, since $u \in H^{1}([0,T[;L^{2}(S^1))$, we must have convergence:
\begin{equation}
\label{limitinl2ofsols}
	\lim_{t \to T} u(t) = v \quad \text{ in } L^{2}(S^1),
\end{equation}
due to a standard continuity argument. This also shows uniqueness of $v$ independent of any choice of sequence $t_n \to T$.

Next, we want to estimate the $1/2$-energy of $v$. To do this, let us assume that there is just one bubbling point $x_0$ at time $T$ (the general case follows analogously, losing energy in finitely many points). Then we have:
\begin{align}
	E_{1/2}(v)	&= \int_{S^1} \int_{S^1} \frac{| v(x) - v(y) |^2}{| x-y |^2} dy dx \notag \\
			&= \lim_{r \to 0} \int_{S^{1} \setminus B_{r}(x_0)} \int_{S^{1} \setminus B_{r}(x_0)} \frac{| v(x) - v(y) |^2}{| x-y |^2} dy dx \notag \\
			&= \lim_{r \to 0} \liminf_{n \to \infty} \int_{S^{1} \setminus B_{r}(x_0)} \int_{S^{1} \setminus B_{r}(x_0)} \frac{| u(t_n, x) - u(t_n, y) |^2}{| x-y |^2} dy dx \notag \\
			&\leq \liminf_{n \to \infty} E_{1/2}(u_n) - \varepsilon_0 = \lim_{t \to T} E_{1/2}(u(t)) - \varepsilon_0,
\end{align}
where $\varepsilon_0$ denotes a quantum of energy that is concentrated close to $x_0$. As $\varepsilon_0$ is independent of $u$ and $T$, we deduce that bubbling may only occur in finitely many points, as the $1/2$-energy is decreasing and bounded from below by $0$. Thus, we do not have to worry about accumulations of blow-up points.\\

One concludes now by extending the solution $u$ after $T$ by the main existence result in \cite{wettstein}, Theorem \ref{mainresjw}. The fact that we have obtained a weak solution is easily verified by a direct computation based on the $L^{2}$-convergence of $u(t)$ as $t \to T$, thus establishing the desired global existence result.
Indeed, we assume that $u:[0,+\infty[ \times S^1 \to S^{n-1}$ bubbles at time $T = 1$, the general case with finitely many times in which bubbling occur follows completely analogously. Let $\varphi \in C^{\infty}_{c}(]0,\infty[ \times S^1)$, since we know that the equation holds true for sufficiently small times. Then we have:
\begin{align}
	&\int_{0}^{\infty} \int_{S^1} \partial_{t} u \cdot \varphi dx dt + \int_{0}^{\infty} \int_{S^1} (-\Delta)^{1/2} u \cdot \varphi dx dt \notag \\
	&= - \int_{0}^{\infty} \int_{S^1} u \cdot \partial_{t} \varphi dx dt + \int_{0}^{\infty} \int_{S^1} (-\Delta)^{1/4} u \cdot (-\Delta)^{1/4} \varphi dx dt \notag \\
	&= - \int_{0}^{1} \int_{S^1} u \cdot \partial_{t} \varphi dx dt + \int_{0}^{1} \int_{S^1} (-\Delta)^{1/4} u \cdot (-\Delta)^{1/4} \varphi dx dt \notag \\
	&- \int_{1}^{\infty} \int_{S^1} u \cdot \partial_{t} \varphi dx dt + \int_{1}^{\infty} \int_{S^1} (-\Delta)^{1/4} u \cdot (-\Delta)^{1/4} \varphi dx dt \notag \\
	&= \int_{0}^{1} \int_{S^1} \partial_{t} u \cdot \varphi dx dt + \int_{0}^{1} \int_{S^1} (-\Delta)^{1/4} u \cdot (-\Delta)^{1/4} \varphi dx dt - \int_{S^1} u(1,x) \varphi(1,x) dx \notag \\
	&+ \int_{1}^{\infty} \int_{S^1} \partial_{t} u \cdot \varphi dx dt + \int_{1}^{\infty} \int_{S^1} (-\Delta)^{1/4} u \cdot (-\Delta)^{1/4} \varphi dx dt + \int_{S^1} u(1,x) \varphi(1,x) dx \notag \\
	&= \int_{0}^{1} u | d_{1/2} u |^2 \varphi dx dt - \int_{S^1} u(1,x) \varphi(1,x) dx + \int_{S^1} u(1,x) \varphi(1,x) dx + \int_{1}^{\infty} u | d_{1/2} u |^2 \varphi dx dt \notag \\
	&= \int_{0}^{\infty} u | d_{1/2} u |^2 \varphi dx dt,
\end{align}
which proves the fact that $u$ extended as explained yields a global weak solution. The first line equation is just the distributional formulation, later on we use integration by parts on $[0,\tilde{t}]$ and taking limits $\tilde{t} \to T$. Naturally, similar limits are taken for $[\tilde{t}, \infty[$. Observe that the boundary terms at time $T = 1$ appear due to the previous discussion of convergence in $L^2$ and by the boundary value properties of the extension, see Theorem \ref{mainresjw}. We highlight that $u(1,x)$ is defined for the extended solution to be that limit of the $u(t,x)$ in $L^2$ and weak limit in $H^{1/2}$, as $t \to 1$, see \eqref{limitinl2ofsols}. Iterating this procedure finitely many times provides therefore a global weak solution.

In conclusion, we have the following, since the argument only superficially relies on $N = S^{n-1}$:

\begin{thm}
	Let $u_0 \in H^{1/2}(S^1;N)$, then there exists a weak solution with non-increasing $1/2$-Dirichlet energy:
	$$u: [0, +\infty [ \times S^1 \to N,$$
	with $u \in L^{\infty}([0,+\infty[; H^{1/2}(S^1)) \cap H^{1}([0,+\infty[;L^{2}(S^1))$ such that, except for finitely many times $0 < T_1 < \ldots < T_{n} < T_{n+1} := + \infty$, the function $u$ is smooth:
	$$u \in C^{\infty}(]T_k, T_{k+1}[; N), \quad \forall k = 1, \ldots n.$$
	Moreover, we may bound the number $n = n(u_0)$ as follows:
	$$n(u_0) \leq \frac{E(u_0)}{\varepsilon_0},$$
	where $\varepsilon_0 > 0$ is the minimum amount of $1/2$-energy a non-constant, half-harmonic map with values in $N$ must possess.
\end{thm}

A first uniqueness statement may also be derived from the results in \cite{wettstein}. However, it should be noted that uniqueness among energy class solution (weak solutions) cannot be proven by our previous arguments and thus requires further investigations. Finally, the existence of finite time bubbling is still unresolved, so the result above provides potentially a suitable regularity statement at bubbling points to help understand obstructions for bubbling or build examples in future work.

\subsubsection{Proof by Variational Arguments}

In this section, we derive an alternative proof of the global weak existence of solutions to the half-harmonic gradient flow using techniques from Calculus of Variations similar to \cite{audrito}. This approach does lead to existence of solutions, however, it leaves open many questions regarding the properties of the solution, most importantly regarding monotonicity of the $1/2$-Dirichlet energy. In particular, if the solutions constructed do not have monotonically decaying energy, then the solution provides an example of non-uniqueness of solutions to the half-harmonic map equation.\\

The definition of the energy follows \cite{audrito}. Let $\varepsilon > 0$ be any positive real number. We define the following space of functions for $s \in ]0,1[$ and $1 < p < + \infty$:
$$\mathcal{V}^{s,p} :=  H^{1}([0,+\infty[; L^{2}(S^1;\R^n)) \cap L^{2}_{loc}([0,+\infty[;W^{s,p}(S^1;\R^n)),$$
and use this definition to introduce for any $u_0 \in W^{s,p}(S^1;N)$, where $N$ is a closed submanifold in $\R^n$:
\begin{equation}
\label{solspace}
	\mathcal{U}^{s,p}(u_0) := \big{\{} u \in \mathcal{V}^{s,p}\ \big{|}\ u(t,x) \in N \text{ a.e.},
	 u(0) = u_0 \big{\}}
\end{equation}
Comparing with \cite{schisirewang}, the space \eqref{solspace} actually coincides with space in which the solutions constructed there exist. Moreover, we define the following family of energies:
\begin{equation}
\label{defenergy}
	\mathcal{E}^{s,p}_{\varepsilon}(u) := \int_{0}^{+\infty} \int_{S^1} e^{-t/\varepsilon} \left( \varepsilon \cdot | \partial_{t} u(t,x) |^2 + \frac{2}{p} \cdot \int_{S^1}  \left| \frac{u(t,x) - u(t,y)}{| x-y |^s} \right|^{p} \frac{dy}{| x-y |} \right) dx dt,
\end{equation}
for any $u \in \mathcal{U}^{s,p}(u_0)$. One notices that the energy is indeed well-defined and finite in this case. An obvious member of $\mathcal{U}^{s,p}(u_0)$ is the following map:
$$u(t,x) := u_0(x),$$
and this shows:
\begin{equation}
\label{simpleminimiserbound}
	\inf_{u \in \mathcal{U}^{s,p}(u_0)} \mathcal{E}^{s,p}_{\varepsilon}(u) \leq 2 E_{s,p}(u_0) \cdot \int_{0}^{\infty} e^{-t/\varepsilon} dt = 2 \varepsilon \cdot E_{s,p}(u_0),
\end{equation}
where we use the definition of $E_{s,p}$ as in \cite{schisirewang}. Thus, we immeidately see that if $(u_\varepsilon)_{\varepsilon \in ]0,1[}$ is a sequence of minimizers, then the energies will become arbitrarily small. Additionally, existence of minimizers can easily be proven by the direct method.

Defining $v(t,x) := u(\varepsilon t, x)$, we see:
\begin{align}
	\mathcal{E}^{s,p}_{\varepsilon}(u)	&= \int_{0}^{+\infty} \int_{S^1} e^{-t/\varepsilon} \left( \varepsilon \cdot | \partial_{t} u(t,x) |^2 + \frac{2}{p} \cdot \int_{S^1}  \left| \frac{u(t,x) - u(t,y)}{| x-y |^s} \right|^{p} \frac{dy}{| x-y |} \right) dx dt \notag \\
								&= \int_{0}^{+\infty} \int_{S^1} \varepsilon e^{-s} \left( \varepsilon \cdot | \partial_{t} u(\varepsilon s,x) |^2 + \frac{2}{p} \cdot \int_{S^1}  \left| \frac{u(\varepsilon s,x) - u(\varepsilon s,y)}{| x-y |^s} \right|^{p} \frac{dy}{| x-y |} \right) dx ds \notag \\
								&= \int_{0}^{+\infty} \int_{S^1} e^{-s} \left(  | \partial_{t} v(s,x) |^2 + \frac{2\varepsilon}{p} \cdot \int_{S^1}  \left| \frac{v(s,x) - v(s,y)}{| x-y |^s} \right|^{p} \frac{dy}{| x-y |} \right) dx ds \notag \\
								&=: \mathcal{J}^{s,p}_{\varepsilon}(v)
\end{align}
Notice that $v$ still lies in $\mathcal{U}^{s,p}(u_0)$ and that by computation above, we know that minimising $\mathcal{E}_{\varepsilon}$ and minimising $\mathcal{J}_{\varepsilon}$ is equivalent respecting the reparametrisation in time.\\

Let us now compute the Euler-Lagrange equation for $\mathcal{E}^{s,p}_{\varepsilon}$:
\begin{lem}
	The Euler-Lagrange equation for minimisers $u \in \mathcal{U}^{s,p}(u_0)$ of $\mathcal{E}^{s,p}_{\varepsilon}$ can be stated as:
	\begin{equation}
	\label{eulerlagrangeeps}
		- \varepsilon \partial_{t}^2 u(t,x) + \partial_{t} u(t,x) + \div_{s} \left( |d_{s}u(t,x,y) |^{p-2} d_{s} u(t,x,y) \right) \perp T_{u} N, \quad \text{ in } \mathcal{D}'(]0,+\infty[ \times S^1)
	\end{equation}
\end{lem}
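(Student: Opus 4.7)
The plan is to compute the first variation of $\mathcal{E}^{s,p}_{\varepsilon}$ in the standard way, handling the target constraint $u(t,x) \in N$ via admissible variations tangent to $N$. Concretely, I would consider variations of the form $u_{\tau} := \pi_N(u + \tau \varphi)$, where $\pi_N$ is the nearest-point projection onto $N$ (defined smoothly in a tubular neighborhood) and $\varphi \in C^{\infty}_c(]0,+\infty[ \times S^1; \R^n)$ with $\varphi(t,x) \in T_{u(t,x)} N$. Since $u_0$ is fixed, the compact support condition on $\varphi$ away from $t=0$ ensures $u_\tau(0,\cdot) = u_0$, and the tangency guarantees $\partial_\tau u_\tau|_{\tau=0} = \varphi$. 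Minimality then forces $\tfrac{d}{d\tau}\big|_{\tau=0} \mathcal{E}^{s,p}_\varepsilon(u_\tau) = 0$ for every such $\varphi$.

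Next I would differentiate the two contributions separately. For the time-derivative piece, a direct computation gives
\begin{equation*}
\tfrac{d}{d\tau}\big|_{\tau=0} \int_{0}^{+\infty} \int_{S^1} e^{-t/\varepsilon} \varepsilon\, |\partial_t u_\tau|^2 \, dx\, dt = 2\varepsilon \int_{0}^{+\infty} \int_{S^1} e^{-t/\varepsilon} \partial_t u \cdot \partial_t \varphi \, dx\, dt,
\end{equation*}
and integrating by parts in $t$, using $\partial_t(e^{-t/\varepsilon}) = -\tfrac{1}{\varepsilon} e^{-t/\varepsilon}$, produces the two terms
\begin{equation*}
-2\varepsilon \int e^{-t/\varepsilon} \partial_t^2 u \cdot \varphi \, dx\,dt + 2\int e^{-t/\varepsilon} \partial_t u \cdot \varphi \, dx\, dt.
\end{equation*}
This is the step where the exponential weight provides the characteristic $\partial_t u$ lower-order term in addition to the expected $-\varepsilon \partial_t^2 u$.

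For the nonlocal piece, differentiating the integrand $|d_s u|^p$ yields $p \, |d_s u|^{p-2} d_s u \cdot d_s \varphi$ by the chain rule applied pointwise in $(x,y)$, whence
\begin{equation*}
\tfrac{d}{d\tau}\big|_{\tau=0} \int_{0}^{+\infty} \int_{S^1} e^{-t/\varepsilon} \tfrac{2}{p} \int_{S^1} |d_s u_\tau|^p \tfrac{dy}{|x-y|} dx\, dt = 2 \int_{0}^{+\infty} e^{-t/\varepsilon} \int_{S^1}\int_{S^1} |d_s u|^{p-2} d_s u \cdot d_s \varphi \, \tfrac{dy\, dx}{|x-y|} dt.
\end{equation*}
By the very definition of the fractional divergence recalled in the preliminary section, the inner double integral equals $\langle \div_s(|d_s u|^{p-2} d_s u), \varphi \rangle$, which rewrites the contribution as $2\int e^{-t/\varepsilon} \div_s(|d_s u|^{p-2} d_s u) \cdot \varphi\, dx\, dt$.

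Adding the two contributions and using that $e^{-t/\varepsilon} > 0$ and $\varphi$ is an arbitrary smooth $T_u N$-valued test function with compact support in $]0,+\infty[ \times S^1$, one concludes that
\begin{equation*}
-\varepsilon \partial_t^2 u + \partial_t u + \div_s\!\bigl(|d_s u|^{p-2} d_s u\bigr) \perp T_u N \quad \text{in } \mathcal{D}'(]0,+\infty[ \times S^1),
\end{equation*}
which is exactly \eqref{eulerlagrangeeps}. The only delicate point is the admissibility of the variations: one must verify that $u_\tau \in \mathcal{U}^{s,p}(u_0)$ for small $\tau$, which follows from the smoothness of $\pi_N$ near $N$ together with the fact that $u$ takes values in $N$ almost everywhere, so that the composition preserves membership in $H^1([0,+\infty[; L^2) \cap L^2_{loc}([0,+\infty[; W^{s,p})$; this is a routine but slightly technical check, and constitutes the main (albeit standard) obstacle.
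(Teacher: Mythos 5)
Your argument is essentially the paper's---outer variations via the nearest-point projection, followed by a standard first-variation computation and an integration by parts in $t$---so the substance is right. The one imprecision is in your class of admissible variations: you require $\varphi \in C^{\infty}_c(]0,+\infty[\times S^1;\R^n)$ with the pointwise constraint $\varphi(t,x)\in T_{u(t,x)}N$, but for a merely weak $u$ (elements of $\mathcal{U}^{s,p}(u_0)$ are not smooth) such smooth, pointwise-tangent fields need not exist apart from $\varphi\equiv 0$, so the test class as you describe it is too small. The paper sidesteps this by taking $\varphi$ to be an arbitrary smooth $\R^n$-valued test function; then $\partial_\delta u_\delta\big|_{\delta=0}=d\pi(u)\varphi$, one absorbs the weight by the substitution $\psi(t,x)=e^{-t/\varepsilon}\varphi(t,x)$, and the resulting identity, holding against $d\pi(u)\psi$ for all $\psi$, is precisely the perpendicularity to $T_uN$. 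Your explicit integration by parts in $t$ using $\partial_t(e^{-t/\varepsilon})=-\tfrac{1}{\varepsilon}e^{-t/\varepsilon}$ is an equivalent---and perhaps slightly more transparent---way to reveal the $-\varepsilon\partial_t^2 u+\partial_t u$ structure than the paper's substitution. The fix needed is simply to drop the pointwise tangency requirement on $\varphi$ and carry the $d\pi(u)$ factor through the computation; everything else goes through unchanged.
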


\begin{proof}
	We take the competitors:
	$$u_{\delta}(t,x) := \pi(u + \delta \varphi),$$
	where $\delta \in \R$ and $\varphi \in C^{\infty}_{c}(]0,\infty[ \times S^1;\R^n)$. Moreover, $\pi$ denotes the closest point projection onto $N$.\\
	
	If $u$ is a minimizer, then:
	$$0 = \frac{d}{d\delta} \mathcal{E}^{s,p}_{\varepsilon}(u_{\delta}) \Big{|}_{\delta = 0}$$
	Using the explicit formula \eqref{defenergy} for the energy (observe that $u_{\delta}$ lies in the correct space for every $\delta \in \R$ sufficiently small), one can differentiate immediately (we use $d_0 u(t,x,y) = u(t,x) - u(t,y)$ to simplify the terms):
	$$0 = \int_{0}^{+\infty} \int_{S^1} e^{-t/\varepsilon} \left( 2\varepsilon \partial_{t} u \cdot \partial_{t} \left( d\pi(u) \varphi \right) + 2 \int_{S^1} \frac{| u(t,x) - u(t,y) |^{p-2}}{| x-y |^{1 + sp}} d_{0}u(t,x,y) \cdot d_{0} \left( d\pi(u) \varphi \right)(t,x,y) dy  \right) dx dt$$
	If we choose $\psi(t,x) = e^{-t/\varepsilon} \varphi(t,x)$, then:
	$$0 = \int_{0}^{+\infty} \int_{S^1} \varepsilon \partial_{t} u \cdot \partial_{t} \left( d\pi(u) \psi \right) + \partial_{t} u \cdot d\pi(u) \psi + \int_{S^1} \frac{| u(t,x) - u(t,y) |^{p-2}}{| x-y |^{1 + sp}} d_{0}u(t,x,y) \cdot d_{0} \left( d\pi(u) \psi \right)(t,x,y) dy dx dt$$
	So the Euler-Lagrange equation is equivalent to:
	$$- \varepsilon \partial_{t}^2 u(t,x) + \partial_{t} u(t,x) + \div_{s} \left( |d_{s}u(t,x,y) |^{p-2} d_{s} u(t,x,y) \right) \perp T_{u} N, \quad \text{ in } \mathcal{D}'(]0,+\infty[ \times S^1),$$
	i.e. up to the term involving the second derivative in time direction we recognise the fractional harmonic gradient flow. This proves \eqref{eulerlagrangeeps}.
\end{proof}
In particular, if $s = 1/2, p = 2$, we find the same equation as in \cite{wettstein}, up to the second order derivative in $t$. This is also the case we shall restrict our attention to for now (writing $\mathcal{J}_{\varepsilon}$ instead of $\mathcal{J}^{1/2,2}_{\varepsilon}$), the general case for arbitrary fractional harmonic flows may be treated in a completely analogous way, also extending the existence result in \cite{schisirewang} in a wider setting.\\

The ideas to complete the proof then are very similar to \cite{audrito}. Namely, one may define completely analogously:
\begin{align}
	\label{defI}
	I(t)	&:= \int_{S^1} | \partial_{t} v(t,x) |^2 dx \\
	\label{defR}
	R(t)	&:= \varepsilon \cdot  \int_{S^1} | d_{1/2} v(t) |(x)^2 dx \\
	\label{defE}
	E(t)	&:= e^{t} \int_{t}^{\infty} e^{-s} \left( I(s) + R(s) \right) ds
\end{align}
It is easily observed that for miniizers $v$, we have $I, R \in L^{1}_{loc}([0,\infty[)$ and $e^{-s} (I(s) + R(s)) \in L^{1}([0,\infty[)$. Additionally, $E \in W^{1,1}_{loc}(]0,\infty[) \cap C^{0}([0,\infty[)$ as well as:
$$E' = E - I - R \quad \text{ in } \mathcal{D}'(]0, \infty[)$$
The proof of the following lemma is an immediate adaption of the technique in \cite{audrito}:

\begin{lem}
	Assume $v$ is a minimizer of $\mathcal{J}_{\varepsilon}$. Then:
	\begin{equation}
	\label{derivE}
		E'(t) = - 2I(t), \quad \text{ in } \mathcal{D}'(]0,\infty[)
	\end{equation}
\end{lem}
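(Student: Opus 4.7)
The plan is to reduce $E'=-2I$ to a Shatah-type energy identity obtained by testing the Euler--Lagrange equation for $\mathcal{J}_\varepsilon$ against the tangential field $\partial_t v$, and then identify $E$ with the resulting quantity $R-I$ via an elementary first-order ODE argument plus decay at infinity. To access the EL equation, set $u(t,x):=v(t/\varepsilon,x)$; the computation following \eqref{defenergy} gives $\mathcal{E}^{1/2,2}_\varepsilon(u)=\mathcal{J}_\varepsilon(v)$, so $u$ minimises $\mathcal{E}_\varepsilon$ and \eqref{eulerlagrangeeps} holds. Pulling this equation back to $v$ and multiplying by $\varepsilon$ yields
\begin{equation}
-\partial_t^2 v + \partial_t v + \varepsilon\, \div_{1/2} d_{1/2} v \;\perp\; T_v N \quad \text{in } \mathcal{D}'(]0,\infty[\times S^1).
\end{equation}

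\textbf{Energy identity.} Since $v$ takes values in $N$, $\partial_t v \in T_v N$ almost everywhere, so pairing the above equation with $\partial_t v$ annihilates the perpendicular part and yields
\begin{equation}
\int_{S^1}\left(-\partial_t^2 v\cdot \partial_t v + |\partial_t v|^2 + \varepsilon\, \div_{1/2} d_{1/2} v \cdot \partial_t v\right) dx = 0
\end{equation}
in $\mathcal{D}'(]0,\infty[)$. The first integrand equals $-\tfrac12\partial_t|\partial_t v|^2$ and hence integrates to $-\tfrac12 I'(t)$; the second to $I(t)$; and the third, after the duality $\int_{S^1}\div_{1/2}F\cdot\varphi\,dx=\int\!\int F\cdot d_{1/2}\varphi\,\frac{dy\,dx}{|x-y|}$ applied with $F=d_{1/2}v$ and $\varphi=\partial_t v$, becomes $\tfrac{\varepsilon}{2}\partial_t\!\int|d_{1/2}v|^2(x)\,dx=\tfrac12 R'(t)$. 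Summing these pieces gives
\begin{equation}
(R-I)'(t)=-2\,I(t)\quad\text{in } \mathcal{D}'(]0,\infty[).
\end{equation}
The main obstacle in this step is making the pairing with $\partial_t v$ rigorous: the perpendicularity statement only tests against variations of the form $\varphi=d\pi(v)\psi$ with $\psi\in C^\infty_c$, where $\pi$ is the nearest-point projection onto $N$, so $\partial_t v$ must be approximated by such fields and the limit taken using the regularity $v\in \mathcal{V}^{1/2,2}$ built into the class $\mathcal{U}^{1/2,2}(u_0)$. This is precisely the same manipulation used in the introduction to derive the baseline energy equality for \eqref{gradflowresjw}.

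\textbf{Identifying $E$ with $R-I$.} Let $F:=R-I$. Combining the identity just obtained with the noted relation $E'=E-I-R$ gives $(F-E)'=F-E$ in $\mathcal{D}'(]0,\infty[)$, which forces $\frac{d}{dt}(e^{-t}(F-E))=0$, so $e^{-t}(F-E)$ equals a constant. To see this constant is zero, I argue from decay at infinity. By construction,
\begin{equation}
e^{-t}E(t)=\int_t^\infty e^{-s}(I+R)(s)\,ds\longrightarrow 0 \quad \text{as } t\to\infty,
\end{equation}
since $e^{-s}(I+R)\in L^1([0,\infty[)$ by the a priori bound \eqref{simpleminimiserbound}. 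Meanwhile, a direct calculation gives $(e^{-t}F)'=e^{-t}(F'-F)=-e^{-t}(R+I)\leq 0$, so the absolutely continuous representative of $e^{-t}F(t)$ is non-increasing; combined with $|e^{-t}F|\leq e^{-t}(R+I)\in L^1([0,\infty[)$, one extracts a sequence $t_n\to\infty$ with $e^{-t_n}F(t_n)\to 0$, and monotonicity then promotes this subsequential limit to the full limit $\lim_{t\to\infty}e^{-t}F(t)=0$. Hence the constant is zero, $E\equiv R-I$, and \eqref{derivE} follows from the energy identity.
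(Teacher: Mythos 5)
Your final ODE step (deriving $E\equiv R-I$ from $(R-I)'=-2I$, the relation $E'=E-I-R$, and decay at $+\infty$) is correct and nicely argued: $e^{-t}(R-I)$ is non-increasing and integrable, hence converges to $0$, and together with $e^{-t}E\to 0$ this pins the integration constant to zero. The difficulty lies upstream. You derive $(R-I)'=-2I$ by pairing the Euler--Lagrange condition \eqref{eulerlagrangeeps} with $\partial_t v$, i.e.\ by an \emph{outer} variation in the direction $\partial_t v$. For this pairing to be meaningful you need, at a minimum, $\partial_t v(t,\cdot)\in H^{1/2}(S^1)$ in order to pair it with $\div_{1/2}d_{1/2}v(t)\in H^{-1/2}(S^1)$, and enough time regularity to make $\partial_t^2 v\cdot\partial_t v=\tfrac12\partial_t|\partial_t v|^2$ a genuine identity rather than a formal one. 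But membership in $\mathcal{U}^{1/2,2}(u_0)$ only provides $\partial_t v\in L^2$ and $v\in L^2_{loc}(W^{1/2,2})$; none of the extra regularity comes for free, and your appeal to ``approximating $\partial_t v$ by $d\pi(v)\psi$ and passing to the limit using $v\in\mathcal{V}^{1/2,2}$'' does not explain why the $H^{-1/2}$--$H^{1/2}$ pairing is continuous along such an approximation. This is a genuine gap, not a removable footnote, and it is exactly the reason the statement is about $E'$ rather than about the pointwise quantity $R-I$.

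The paper, following \cite{audrito} (and Serra--Tilli before it), circumvents this by \emph{inner} variations: it compares the minimiser $v$ with its time reparametrisations $v_\tau(t,x)=v(\phi_\tau(t),x)$, where $\phi_\tau$ is a one-parameter family of diffeomorphisms of $[0,\infty[$ fixing the origin. These competitors automatically remain in $\mathcal{U}^{s,p}(u_0)$ --- the pointwise constraint, the initial datum and the regularity class are all preserved --- so stationarity of $\mathcal{J}_\varepsilon$ in $\tau$ produces a Noether-type identity whose distributional form \emph{is} $E'=-2I$, without ever having to interpret $\partial_t^2 v\cdot\partial_t v$ or a pairing with $\div_{1/2}d_{1/2}v$. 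Your intuition is faithful to what the inner variation encodes (the generator of time reparametrisations is indeed $\partial_t v$), but to make your route rigorous you would first have to prove higher regularity of minimisers, whereas the reparametrisation argument extracts the identity from minimality alone at the a priori regularity level.
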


The proof relies on suitable choices of reparametrisations in time for $v$ and then using minimality of $v$. Ultimately, this allows us to show:

\begin{lem}
	For $v$ a minimizer of $\mathcal{J}_{\varepsilon}$, we have:
	\begin{equation}
	\label{estparttforminimizer}
		\int_{0}^{\infty} | \partial_{t} v(t,x) |^2 dx dt \leq C \varepsilon,
	\end{equation}
	as well as for any $t \geq 0$:
	\begin{equation}
	\label{estd1/2forminimizer}
		\int_{t}^{t+1} \int_{S^1} | d_{1/2} u |^2(t,x) dx dt \leq C,
	\end{equation}
	for some constant $C > 0$, depending on $u_0$, but not $\varepsilon$ or $v$.
\end{lem}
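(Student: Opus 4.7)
The plan is to exploit the ODE $E' = -2I$ given by the previous lemma, combine it with the identity $E' = E - I - R$ (which follows from a straightforward differentiation of the definition \eqref{defE}: $E'(t) = E(t) - (I(t) + R(t))$), and derive the algebraic relation $R = E + I$. Once this is in hand, both inequalities follow from monotonicity arguments together with the a priori bound $E(0) \leq 2\varepsilon E_{1/2}(u_0)$.

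More concretely, first I would observe that $E(0) = \mathcal{J}_\varepsilon(v)$, and since $v$ is a minimizer, we may compare against the constant-in-time competitor $\tilde{v}(t,x) := u_0(x) \in \mathcal{U}^{1/2,2}(u_0)$, whose energy is straightforwardly computed to be bounded by $C \varepsilon E_{1/2}(u_0)$ (by the same computation that led to \eqref{simpleminimiserbound}, up to the change of variables $s = t/\varepsilon$). Next, since $E' = -2I \leq 0$, the function $E$ is non-increasing and non-negative, hence $E(t) \leq E(0) \leq C\varepsilon E_{1/2}(u_0)$ for all $t \geq 0$ and $E_\infty := \lim_{t \to \infty} E(t)$ exists with $E_\infty \geq 0$. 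Integrating $E' = -2I$ over $[0,\infty)$ then yields
\begin{equation*}
\int_0^\infty \int_{S^1} |\partial_t v(t,x)|^2 dx dt = \int_0^\infty I(t) dt = \frac{E(0) - E_\infty}{2} \leq \frac{E(0)}{2} \leq C \varepsilon,
\end{equation*}
which is precisely \eqref{estparttforminimizer}.

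For \eqref{estd1/2forminimizer}, I use the derived identity $R = E + I$. Since $E$ is non-increasing, $\int_t^{t+1} E(s) ds \leq E(t) \leq E(0) \leq C \varepsilon$, and by the previous step $\int_t^{t+1} I(s) ds \leq \int_0^\infty I(s) ds \leq C \varepsilon$. Combining,
\begin{equation*}
\int_t^{t+1} R(s) ds \leq \int_t^{t+1} E(s) ds + \int_t^{t+1} I(s) ds \leq C \varepsilon,
\end{equation*}
and dividing by $\varepsilon$ gives $\int_t^{t+1} \int_{S^1} |d_{1/2} v|^2(s,x) dx ds \leq C$ uniformly in $t$, which is \eqref{estd1/2forminimizer} (identifying $u$ in the statement with the minimizer $v$).

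The main technical obstacle has already been overcome in the previous lemma, namely deriving the distributional identity $E' = -2I$ via an internal time-reparametrisation trick adapted from \cite{audrito}; once that is available, the argument reduces to elementary ODE manipulations and the comparison with the constant-in-time competitor. A minor subtlety worth verifying is the regularity needed to justify integration by parts of $E' = -2I$ on $[0,\infty)$ and the existence of the limit $E_\infty$, but both are guaranteed by $E \in W^{1,1}_{loc}(]0,\infty[) \cap C^0([0,\infty[)$ combined with monotonicity.
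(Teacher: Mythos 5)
Your proof is correct and follows essentially the same route as the paper. The only small variation is in how you bound $\int_t^{t+1} R$: you combine the two identities $E' = -2I$ and $E' = E - I - R$ to isolate the pointwise (a.e.) relation $R = E + I$ and then integrate each piece, whereas the paper instead bounds $\int_t^{t+1} R(s)\,ds \leq e^{t+1}\int_t^{\infty} e^{-s}(I(s)+R(s))\,ds = e\,E(t)$ directly from the integral definition \eqref{defE} of $E$. Both variants use identical ingredients (monotonicity of $E$, the competitor bound $E(0) \leq C\varepsilon$, and integrability of $I$) and yield the same conclusion after dividing by $\varepsilon$; your algebraic reformulation is a slightly cleaner way to see why the bound is uniform in $t$.
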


\begin{proof}
	$E(t)$ is necessarily non-increasing due to $I(t) \geq 0$, therefore:
	$$E(t) \leq E(0) = \mathcal{J}_{\varepsilon}(v), \forall t \geq 0$$
	Additionally, for any given $t$, we know:
	$$\int_{0}^{t} I(s) ds = \frac{1}{2} \int_{0}^{t} E'(s) ds = \frac{1}{2} \left( E(0) - E(t) \right) \leq \frac{1}{2} E(0) \leq \frac{C}{2} \varepsilon,$$
	by using \eqref{simpleminimiserbound}. Letting $t \to \infty$ proves \eqref{estparttforminimizer} by using \eqref{defI}.\\
	
	The remaining part of the proof requires us to use \eqref{defR} as well as:
	\begin{align}
		\int_{t}^{t+1} R(s) ds	&= e^{t+1} \int_{t}^{t+1} e^{-s} R(s) ds \notag \\
						&\leq e^{t+1} \int_{t}^{t+1} e^{-s} \left( I(s) + R(s) \right) ds \\
						&\leq e \cdot E(t) \leq Ce \cdot \varepsilon, 
	\end{align}
	again relying on \eqref{simpleminimiserbound} and the bound on $E(t)$ established above.
\end{proof}

Thus, to obtain a solution of the half-harmonic gradient flow (which follows thanks to \eqref{eulerlagrangeeps} after letting $\varepsilon \to 0$), one now just has to rescale the minimizer $v$ back to $u$ and use the following uniform bounds to extract weakly convergent subsequences. Thus, we are done, as we may extract further subsequences converging almost surely pointwise, ensuring that the limiting function assumes values only in $N$.

\end{document}